\newtheorem{thm}{Theorem}[section]
\newtheorem{defn}[thm]{Definition}
\newtheorem{corollary}[thm]{Corollary}
\newtheorem{lem}[thm]{Lemma}
\newtheorem{assumption}[thm]{Assumption}
\theoremstyle{remark}
\newtheorem{remark}[thm]{Remark}
\def\qed{{\hfill $\Box$ \bigskip}}
\def\XXint#1#2#3{{\setbox0=\hbox{$#1{#2#3}{\int}$}
\vcenter{\hbox{$#2#3$}}\kern-.5\wd0}}
\newcommand\aint{-\hspace{-0.38cm}\int}
\newcommand\cbrk{\text{$]$\kern-.15em$]$}}
\newcommand\opar{\text{\,\raise.2ex\hbox{${\scriptstyle
|}$}\kern-.34em$($}}
\newcommand\cpar{\text{$)$\kern-.34em\raise.2ex\hbox{${\scriptstyle |}$}}\,}
\def\<{\langle}
\def\>{\rangle}
\newcommand\bL{\mathbb{L}}
\newcommand\bR{\mathbb{R}}
\newcommand\bC{\mathbb{C}}
\newcommand\bH{\mathbb{H}}
\newcommand\bN{\mathbb{N}}
\newcommand\fL{\mathbf{L}}
\newcommand\fR{\mathbf{R}}
\newcommand\cF{\mathcal{F}}
\newcommand\cG{\mathcal{G}}
\newcommand\cI{\mathcal{I}}
\newcommand\cL{\mathcal{L}}
\newcommand\cR{\mathcal{R}}
\newcommand\cM{\mathcal{M}}
\newcommand{\mysection}[1]{\section{#1}
\setcounter{equation}{0}}
\begin{document}

\title[$L_q(L_p)$-theory for Fractional Equations]{An $L_q(L_p)$-theory for the time fractional evolution equations with variable coefficients}

\author{Ildoo Kim}
\address{Department of Mathematics, Korea University, 1 Anam-dong, Sungbuk-gu, Seoul,
136-701, Republic of Korea} \email{waldoo@korea.ac.kr}

\author{Kyeong-Hun Kim}
\address{Department of Mathematics, Korea University, 1 Anam-dong,
Sungbuk-gu, Seoul, 136-701, Republic of Korea}
\email{kyeonghun@korea.ac.kr}
\thanks{This work was supported by Samsung Science  and Technology Foundation under Project Number SSTF-BA1401-02}

\author{Sungbin Lim}
\address{Department of Mathematics, Korea University, 1 Anam-dong, Sungbuk-gu, Seoul,
136-701, Republic of Korea} \email{sungbin@korea.ac.kr}

\subjclass[2010]{45D05, 45K05, 45N05, 35B65, 26A33}

\keywords{Fractional diffusion-wave equation, $L_q(L_p)$-theory, $L_p$-theory, Caputo fractional derivative, Variable coefficients}

\begin{abstract}
We introduce an  $L_q(L_p)$-theory for the quasi-linear fractional  equations of the type
$$
\partial^{\alpha}_tu(t,x)=a^{ij}(t,x)u_{x^ix^j}(t,x)+f(t,x,u), \quad t>0, \,x\in \fR^d.
$$
 Here, $\alpha\in (0,2)$, $p,q>1$, and $\partial^{\alpha}_t$  is the Caupto fractional derivative of order $\alpha$.  Uniqueness, existence, and $L_q(L_p)$-estimates of solutions are obtained.  The leading coefficients $a^{ij}(t,x)$ are assumed to be piecewise continuous in $t$ and uniformly continuous in $x$.
  In particular $a^{ij}(t,x)$ are allowed to be discontinuous with respect to the time variable.
  Our approach is based on classical tools in PDE theories such as the Marcinkiewicz interpolation theorem, the Calderon-Zygmund theorem, and perturbation arguments.
\end{abstract}

\maketitle

\mysection{Introduction}

Fractional calculus has been used in numerous areas including mathematical modeling \cite{MS, V}, control engineering \cite{caponetto2010fractional, podlubny1999fractional},
electromagnetism \cite{engheia1997role, tarasov2006electromagnetic}, polymer science \cite{bagley1983theoretical, metzler1995relaxation}, hydrology \cite{benson2000application, SBMW}, biophysics \cite{glockle1995fractional, langlands2009fractional}, and even finance \cite{raberto2002waiting, scalas2000fractional}. See also \cite{hilfer2000applications, ortigueira2011fractional, sabatier2007advances, zaslavsky2002chaos} and references therein.
The classical heat equation $\partial_t u=\Delta u$ describes the heat propagation in homogeneous mediums.
The time-fractional diffusion equation $\partial^\alpha_t u=\Delta  u$, $\alpha\in (0,1)$,  can be used
to model the anomalous diffusion  exhibiting  subdiffusive behavior,
due to particle sticking and trapping phenomena (see \cite{metzler1999anomalous, metzler2004restaurant}). The fractional wave equation $\partial_t u=\Delta u$, $\alpha\in (1,2)$  governs the propagation of mechanical diffusive waves in viscoelastic media (see \cite{mainardi1995fractional}).
The fractional differential equations have an another important issue in the probability theory related to non-Markovian diffusion processes with a memory (see \cite{metzler2000boundary, MK}).

The main goal of this article is to present an $L_q(L_p)$-theory for the quasi-linear fractional evolution equation
\begin{equation}
                       \label{main eqn 1}
\partial^{\alpha}_tu(t,x)=a^{ij}(t,x)u_{x^ix^j}(t,x)+b^i(t,x)u_{x^i}(t,x)+c(t,x)u(t,x)+f(t,x,u)
\end{equation}
given for $t > 0$ and $x\in \fR^d$. Here $\alpha\in (0,2), p,q>1$, and $\partial^{\alpha}_t$ denotes the Caputo fractional derivative (see \eqref{eqn defn caupto}).
The indices $i$ and $j$ move from $1$ to $d$, and the summation convention with respect to the repeated indices is assumed throughout the article.
It is assumed that the leading coefficients $a^{ij}(t,x)$ are piecewise continuous in $t$ and uniformly continuous in $x$, and the lower order coefficients $b^i$ and $c$ are only bounded measurable functions. We prove that under a mild condition on the nonlinear term $f(t,x,u)$ there exists a unique solution $u$ to \eqref{main eqn 1} and the $L_q(L_p)$-norms of the derivatives $D^{\beta}_x u$, $|\beta|\leq 2$, are controlled by the $L_q(L_p)$-norm of $f(t,x,0)$.

We remark that there are a few other types of fractional derivatives such as
 Riemann-Liouville, Marchaud,  and Gr\"{u}nwald-Letnikov fractional derivatives.  These three
fractional derivatives coincide with the Caputo fractional derivative in our solution space $\mathbb{H}_{q,p,0}^{\alpha,n}(T)$ (see \cite{SKM,Po} for the proof).

Here is a brief survey of closely related works.
In \cite{clement1992global, Pr1991} an $L_q(L_p)$-theory for the parabolic Volterra equations of the type
\begin{equation}
         \label{eqn pruss}
\frac{\partial}{\partial t}\left(c_0u+\int^t_{-\infty}k_1(t-s)u(s,x)ds\right)=\Delta u+f(t,x,u), \quad t\in \fR, x\in \fR^d
\end{equation}
is obtained under the conditions  $k_1(t)\geq ct^{-\alpha}$ for small $t$,  $c_0\geq 0$, $\alpha\in (0,1)$, and
\begin{equation}
                   \label{eqn pruss cond}
\frac{2}{\alpha q}+\frac{d}{p}<1.
\end{equation}
The results of \cite{clement1992global, Pr1991} also cover the case $c_0>0$, however it is obtained only for the case $a^{ij}(t,x)=\delta^{ij}$ with  the restrictions  $\alpha\in (0,1)$ and \eqref{eqn pruss cond}.  If $p=q$, an $L_p$-theory of type \eqref{main eqn 1} with the variable coefficients $a^{ij}(t,x)$ is presented in \cite{zacher2005maximal} under the condition that $a^{ij}$ are uniformly continuous  in $(t,x)$ and  $\lim_{|x|\to \infty}a^{ij}(t,x)$ exists. In \cite{Za} an $L_{2}$-theory is obtained for the divergence type equations with general measurable coefficients. Also an eigenfunction expansion method is introduced in \cite{SY} to obtain $L_2$-estimates of solutions of divergence type equations with $C^1$-coefficients.

For other approaches to the  equations with fractional time derivatives, we  refer  to \cite{kunstmann2004maximal} for the semigroup approach,  to \cite{da1985existence, Pr} for $C^{\delta}([0,T], X)$-type theory, where $X$ is an appropriate Banach space, and to \cite {clement2004quasilinear} for $BUC_{1-\beta}([0,T],X)$-type estimate, where $\|u\|_{BUC_{1-\beta}([0,T],X)}=\sup_{t\in(0,T]}t^{1-\beta}\|u(t)\|_{X}$.

Our result substantially generalizes above mentioned results in the sense that we do not impose any algebraic conditions on $\alpha$, $p$, and $q$. The conditions \eqref{eqn pruss cond} and $\alpha\in (0,1)$ are used in  \cite{clement1992global, Pr1991}, and the restrictions $p=q$ and $\alpha\not\in \{\frac{2}{2p-1}, \frac{2}{p-1}-1,\frac{1}{p}, \frac{3}{2p-1}\}$  are assumed in \cite{zacher2005maximal}. More importantly, in this article the condition on the leading coefficients $a^{ij}(t,x)$ is considerably weakened. In particular, $a^{ij}(t,x)$ depend on both $t$ and $x$ and can be  discontinuous in $t$. Recall that if  $p>2$ then among above  articles only \cite{zacher2005maximal} considers the coefficients depending also on $t$, but the condition $p=q$ and the continuity  of $a^{ij}$ with respect to $(t,x)$ are assumed in \cite{zacher2005maximal}.

Another significance of this article is the method we use. The results of \cite{clement1992global, Pr1991,zacher2005maximal} are  operator theoretic,  \cite{kunstmann2004maximal} is based on $H^{\infty}$-functional calculus, and the method of \cite{Za,SY} works well only in the Hilbert-space framework. Our approach is purely analytic and is based on the classical tools in PDE theories including the Marcinkiewicz interpolation theorem and the Calderon-Zygmund theorem. We obtain the mean oscillation (or BMO estimate) of solutions and then apply the Marcinkiewicz interpolation theorem to obtain $L_p$-estimates of solutions. To go from $L_p$-theory to  $L_q(L_p)$-theory we show that the kernel appeared in the representation of solutions for equations with constant coefficients satisfies the conditions needed for the Calderon-Zygmund theorem.  Perturbation and fixed point arguments are used to handle the variable coefficients and the nonlinear term respectively.

The article is organized as follows.  Some properties of the fractional derivatives and our main result, Theorem \ref{main theorem}, are presented in Section 2. The representation of solutions to a model equation and an $L_2$-estimate of solutions are given in Section 3, and BMO and an $L_q(L_p)$-estimate of solutions to a model equation are obtained in Section 4. The proof of Theorem \ref{main theorem} is given in Section 5, and sharp estimates of kernels related to the representation of solutions are obtained in Section 6.

We finish the introduction with  some notation used in this article. As usual $\bN=\{1,2,\cdots\}$, $\fR^{d}$ stands for the Euclidean space of points
$x=(x^{1},...,x^{d})$,  $B_r(x) := \{ y\in \fR^d : |x-y| < r\}$,  and
$B_r :=B_r(0)$.
 For  multi-indices $\gamma=(\gamma_{1},...,\gamma_{d})$,
$\gamma_{i}\in\{0,1,2,...\}$, $x \in \fR^d$, and  functions $u(x)$ we set
$$
 u_{x^{i}}=\frac{\partial u}{\partial x^{i}}=D_{i}u,\quad \quad
D^{\gamma}_xu=D_{1}^{\gamma_{1}}\cdot...\cdot D^{\gamma_{d}}_{d}u,
$$
$$
x^\gamma = (x^1)^{\gamma_1} (x^2)^{\gamma_2} \cdots (x^d)^{\gamma_d},\quad \quad
|\gamma|=\gamma_{1}+\cdots+\gamma_{d}.
$$
We also use $D^m_x$ to denote a partial derivative of order $m$  with respect to $x$.
For an open set $\Omega \subset \fR^d$
by $C_c^\infty(\Omega)$  we denote the set of infinitely differentiable  functions with compact support in $U$.
For a Banach space $F$ and $p>1$ by $L_p(U,F)$ we denote the set of $F$-valued Lebesgue-measurable functions $u$ on $\Omega$ satisfying
$$
\|u\|_{L_{p}(\Omega,F)}=\left(\int_{\Omega}\|u(x)\|_{F}^{p}dx\right)^{1/p}<\infty.
$$
We write $f\in L_{p,loc}(U,F)$ if  $\zeta f\in L_p(U,F)$ for any real-valued $\zeta\in C_c^\infty(U)$.
Also $L_p(\Omega)=L_p(\Omega, \fR)$ and $L_p=L_p(\fR^d)$.
We use  ``$:=$" to denote a definition.
By $\cF$ and $\cF^{-1}$ we denote the $d$-dimensional Fourier transform and the inverse Fourier transform respectively, i.e.
$$
\cF(f)(\xi) := \frac{1}{(2\pi)^{d/2}} \int_{\fR^{d}} e^{-i x \cdot \xi} f(x) dx, \quad
\cF^{-1}(f)(x) := \frac{1}{(2\pi)^{d/2}}\int_{\fR^{d}} e^{ i\xi \cdot x} f(\xi) d\xi.
$$
For a Lebesgue set $A\subset \fR^d$, we use $|A|$ to denote its Lebesgue
measure and by $1_A(x)$ we denote  the indicator of $A$.
For a complex number $z$, $\Re[z]$ is the real part of $z$. Finally if we write $N=N(a,b,\ldots)$, this means that the constant $N$ depends only on $a,b,\ldots$.

\mysection{Main results}

We fix $T \in (0,\infty)$ throughout the article.  For $\alpha>0$  denote
$$
k_{\alpha}(t):=t^{\alpha-1}\Gamma(\alpha)^{-1}, \quad t>0,
$$
where $\Gamma(\alpha):=\int^{\infty}_0 t^{\alpha-1}e^{-t}dt$.
For functions $\varphi\in L_1((0,T))$  the Riemann-Liouville fractional integral of the order $\alpha>0$ is defined as
$$
I^{\alpha} \varphi (t)=k_{\alpha}*\varphi (t)=\frac{1}{\Gamma(\alpha)}\int^t_0 (t-s)^{\alpha-1}\varphi(s)ds.
$$
It is easy to check that
$$
I^{\alpha+\beta} \varphi (t) =I^\alpha I^\beta \varphi (t) \quad \quad \forall \alpha, \beta >0.
$$
Also,  by Jensen's inequality,
 for any $p\in [1,\infty]$,
\begin{equation}
                        \label{eqn 7.03.1}
\|I^{\alpha}\varphi\|_{L_p((0,T))}\leq N(T,\alpha) \|\varphi\|_{L_p((0,T))}.
\end{equation}
It is also known (see e.g. \cite{SKM}) that $I^{\alpha}: B^{\lambda}\to C^{\lambda+\alpha}$ is a bounded operator if $\lambda\geq 0$ and $\lambda+\alpha<1$,
where $B^0=L_{\infty}$ and  $B^{\lambda}=C^{\lambda}$ if $\lambda>0$.

Let $k\in \bN$,  $k-1\leq \alpha<k$, and $f^{(k-1)}(t)$ be absolutely continuous, where $f^{(k-1)}(t)$ denotes the $(k-1)$-th derivative  of function $f$.
Then the Caputo fractional derivative of order $\alpha>0$ is defined as
\begin{align}
                       \label{eqn defn caupto-0}
\partial^{\alpha}_t f(t)
&=\frac{1}{\Gamma(k-\alpha)}\int^t_0 (t-s)^{k-\alpha-1}f^{(n)}(s)ds\\
&=\frac{1}{\Gamma(k-\alpha)}\frac{d}{dt}\int^t_0 (t-s)^{k-\alpha-1}\left[f^{(k-1)}(s)-f^{(k-1)}(0)\right] ds.
\label{eqn defn caupto}
\end{align}
Note that  \eqref{eqn defn caupto} (``Kochubei extension") is defined for a broader class of functions.

Let $q\geq 1$. For functions $f\in C^k([0,T])$, we denote
    $$
    \|f\|_{H^{\alpha}_q(T)}=\left(\int^T_0 |f|^q dt\right)^{1/q}+ \left(\int^T_0 |\partial^{\alpha}_tf|^q dt \right)^{1/q}.
    $$
The following lemma shows that it is irreverent whether one uses \eqref{eqn defn caupto-0} or \eqref{eqn defn caupto} as the definition of $\partial^{\alpha}_t$ for functions in $H^{\alpha}_q(T)$.
\begin{lem}
   The  closures  $H^{\alpha}_q(T)$ and
        $\tilde{H}^{\alpha}_q(T)$ of $C^k([0,T])$ in the space $L_q((0,T))$ with respect to norms  $\|\cdot\|_{H^{\alpha}_q}$ related to
        \eqref{eqn defn caupto-0}  and \eqref{eqn defn caupto} respectively coincide.
 \end{lem}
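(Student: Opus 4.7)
The plan is to show that the two norms actually coincide on $C^k([0,T])$, from which the equality of the abstract $L_q$-completions follows immediately. Thus the whole task reduces to verifying that the two definitions \eqref{eqn defn caupto-0} and \eqref{eqn defn caupto} give the same pointwise value of $\partial^{\alpha}_t f(t)$ for every $f \in C^k([0,T])$ and every $t \in (0,T]$.

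First I would fix $f \in C^k([0,T])$ and integrate by parts in the inner integral appearing in \eqref{eqn defn caupto}, taking $u(s)=f^{(k-1)}(s)-f^{(k-1)}(0)$ and $dv=(t-s)^{k-\alpha-1}ds$, so that $v=-(t-s)^{k-\alpha}/(k-\alpha)$. Because $k-\alpha>0$, the boundary term at $s=t$ vanishes through the factor $(t-s)^{k-\alpha}$, while the boundary term at $s=0$ vanishes through the factor $f^{(k-1)}(s)-f^{(k-1)}(0)$. This transforms the inner integral into
\[
\frac{1}{k-\alpha}\int_0^t (t-s)^{k-\alpha}f^{(k)}(s)\,ds.
\]

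Next I would differentiate this expression with respect to $t$ by the Leibniz rule. The contribution of the moving upper limit is zero since $(t-s)^{k-\alpha}|_{s=t}=0$, and differentiating the integrand inside yields $(k-\alpha)(t-s)^{k-\alpha-1}f^{(k)}(s)$. The factors $k-\alpha$ cancel, producing exactly $\int_0^t (t-s)^{k-\alpha-1}f^{(k)}(s)\,ds$, which after division by $\Gamma(k-\alpha)$ is precisely \eqref{eqn defn caupto-0}. Both the integration by parts and the differentiation under the integral are legitimate: $(t-s)^{k-\alpha-1}$ is integrable on $(0,t)$, and $f\in C^k([0,T])$ provides enough regularity so that $f^{(k-1)}(s)-f^{(k-1)}(0)=O(s)$ near $s=0$ and $f^{(k)}$ is bounded.

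Consequently, the two candidate definitions of $\partial^{\alpha}_t f$ coincide pointwise on $C^k([0,T])$, so the corresponding norms
\[
\|f\|_{L_q((0,T))}+\|\partial^{\alpha}_t f\|_{L_q((0,T))}
\]
are literally equal on $C^k([0,T])$. Taking completions in $L_q((0,T))$ of the same dense subspace with respect to identical norms gives the same Banach space, i.e.\ $H^{\alpha}_q(T)=\tilde H^{\alpha}_q(T)$. The only point requiring any care is the justification of the two elementary calculus steps above; there is no genuine analytic obstacle, and this is essentially the reason the authors record the result only as a lemma before turning to the more substantive theory.
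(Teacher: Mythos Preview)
Your proposal is correct and follows exactly the approach of the paper: both argue that the closures coincide because the two definitions \eqref{eqn defn caupto-0} and \eqref{eqn defn caupto} of $\partial^{\alpha}_t f$ agree pointwise for every $f\in C^k([0,T])$, so the two norms are literally the same on that space. The paper records this agreement as obvious without writing out the integration-by-parts computation, whereas you supply those details; otherwise the arguments are identical.
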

\begin{proof}
This is obvious because  \eqref{eqn defn caupto-0}  and \eqref{eqn defn caupto} are equal for functions $f\in C^k([0,T])$.
\end{proof}

Next we introduce an another fractional derivative. Let $D^{\alpha}_t$ denote the Riemann-Liouville fractional derivative of order $\alpha$ which is defined as
\begin{equation}
          \label{Riemann-Liouville}
D^{\alpha}_t\varphi (t)=\frac{d}{dt} (I^{1-\alpha}\varphi)(t), \quad \alpha\in (0,1).
\end{equation}
It is obvious that
\begin{align}
          \label{caputo}
\partial^{\alpha}_t\varphi (t)=D^{\alpha}_t(\varphi-\varphi(0))
= D^{\alpha}_t\varphi (t)-\frac{\varphi(0)}{t^{\alpha}\Gamma(1-\alpha)}, \quad \alpha\in (0,1).
\end{align}
It is easy to check for any $\varphi\in L_1((0,T))$,
\begin{align}
          \label{e:DI}
D^{\alpha}_tI^{\alpha}\varphi =\varphi, \quad \alpha\in (0,1).
\end{align}
Similarly, the equality
\begin{equation}
                 \label{e:ID}
I^{\alpha}D_t^{\alpha}\varphi=\varphi, \quad \alpha\in (0,1)
\end{equation}
also holds if $I^{1-\alpha}\varphi$ is absolutely continuous and $I^{1-\alpha}\varphi(0)=0$.

\begin{defn}
Let $k-1\leq\alpha<k$ and  $f\in H^{\alpha}_q(T)$.  We write $f(0)=0$ if there exists a sequence $f_{n}\in C^k([0,T])$ such that $f_n(0)=0$ and $f_n \to f$ in $H^{\alpha}_q(T)$.
 Similarly if $\alpha>1$ we write $f'(0)=0$ if $f'_n(0)=0$ for all $n$.
\end{defn}

The following lemma gives sufficient and necessary conditions for  $f\in H^{\alpha}_q(T)$ and $f(0)=0$ (or $f'(0)=0$).
\begin{lem}
(i) Let $\alpha\in (0,1)$ and $q>1$. Then $f\in H^{\alpha}_q(T)$ and $f(0)=0$ if and only if
$f\in L_q((0,T))$, $I^{1-\alpha}f\in H^1_q(T)$, $I^{1-\alpha}f(t)$ is continuous, and $I^{1-\alpha}f(0)=0$.

(ii) Let $\alpha \in (1,2)$ and $q>1$. Then $f\in H^{\alpha}_q(T)$ and $f'(0)=0$ if and only if
$f\in H^1_q(T)$,  $I^{2-\alpha}f'\in H^1_q(T)$,  $I^{2-\alpha}f'(t)$ is continuous, and $I^{2-\alpha}f'(0)=0$.

\end{lem}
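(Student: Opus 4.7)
The plan is to reduce every condition involving the Caputo derivative $\partial^\alpha_t f$ to a condition on the integrated quantity $I^{1-\alpha}f$ (or on $I^{2-\alpha}f'$ in part~(ii)). The core identity is $\partial^\alpha_t \varphi = D^\alpha_t \varphi = \tfrac{d}{dt}(I^{1-\alpha}\varphi)$, valid whenever $\varphi(0)=0$ by \eqref{caputo} and \eqref{Riemann-Liouville}. Combined with the inversion formulas \eqref{e:DI}--\eqref{e:ID} and the $L_q$-boundedness of $I^{1-\alpha}$ in \eqref{eqn 7.03.1}, this dictionary lets one pass freely between the two sides of each equivalence.

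For part~(i), forward direction, I would pick $f_n\in C^1([0,T])$ with $f_n(0)=0$, $f_n\to f$ and $\partial^\alpha_t f_n \to \partial^\alpha_t f$ in $L_q((0,T))$. Since $f_n(0)=0$, the core identity yields $\partial^\alpha_t f_n = \tfrac{d}{dt}(I^{1-\alpha}f_n)$; by \eqref{eqn 7.03.1}, $I^{1-\alpha}f_n \to I^{1-\alpha}f$ in $L_q$, so $I^{1-\alpha}f\in H^1_q(T)$ with classical derivative $\partial^\alpha_t f$. Continuity follows from the Sobolev embedding $H^1_q\hookrightarrow C([0,T])$ for $q>1$, and $I^{1-\alpha}f(0)=0$ comes from uniform convergence plus the observation that any $C^1$ function vanishing at $0$ satisfies $I^{1-\alpha}f_n(0)=0$ (since $|f_n(s)|\le C_n s$ near zero).

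For the reverse direction in part~(i), I would set $g:=\tfrac{d}{dt}(I^{1-\alpha}f)\in L_q((0,T))$; by \eqref{e:ID} and the continuity plus zero-at-zero hypotheses, $f=I^\alpha g$. Choose $g_n\in C^\infty_c((0,T))$ with $g_n\to g$ in $L_q$ and set $f_n:=I^\alpha g_n$. A direct computation shows $f_n\in C^1([0,T])$ with $f_n(0)=0$, and $\partial^\alpha_t f_n = \tfrac{d}{dt}(I^{1-\alpha}I^\alpha g_n)= g_n$ by \eqref{e:DI}; hence $f_n\to f$ in $L_q$ (by \eqref{eqn 7.03.1}) and $\partial^\alpha_t f_n\to \partial^\alpha_t f$ in $L_q$, certifying $f\in H^\alpha_q(T)$ with $f(0)=0$.

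Part~(ii) I would reduce to part~(i) applied to $f'$ via the identity $\partial^\alpha_t f = \partial^{\alpha-1}_t f'$ for $f\in C^2([0,T])$, which is read off directly from \eqref{eqn defn caupto-0} (using $k=2$ for $\alpha\in(1,2)$ and $k=1$ for $\alpha-1\in(0,1)$). The hypothesis $f\in H^1_q(T)$ is exactly what is needed so that an approximating sequence converging in the $H^\alpha_q$-norm can be arranged to approximate $f'$ in $L_q$ as well, permitting part~(i) to be applied to $\{f_n'\}$; conversely one recovers $f_n\in C^2([0,T])$ with $f_n'(0)=0$ by setting $f_n(t):=f(0)+\int_0^t g_n(s)\,ds$, where $g_n$ is the $C^1$ approximation of $f'$ supplied by part~(i) applied to $f'\in H^{\alpha-1}_q(T)$. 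The main technical obstacle I anticipate is the regularity bookkeeping in the backward constructions: the approximants must lie in $C^k([0,T])$ with the correct boundary value at $0$, which is precisely why I mollify at the level of the appropriate derivative using compactly supported test functions, since these vanish near the origin and make the fractional integrals sufficiently regular there.
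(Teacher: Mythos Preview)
Your proposal is correct and follows essentially the same approach as the paper: in part~(i) you approximate at the level of $g=\tfrac{d}{dt}(I^{1-\alpha}f)$ and recover the defining sequence via $f_n=I^\alpha g_n$, exactly as the paper does. Your reduction of part~(ii) to part~(i) through the identity $\partial^\alpha_t f=\partial^{\alpha-1}_t f'$ is a natural repackaging of the paper's direct construction $f_n=I^\alpha g_n+f(0)$ and unwinds to the same sequence; just note that in the forward direction of~(ii) the fact $f\in H^1_q(T)$ is a \emph{conclusion}, obtained automatically from $f_n'=I^{\alpha-1}(\partial^\alpha_t f_n)$ once $f_n'(0)=0$, rather than a hypothesis you may invoke.
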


\begin{proof}
(i) Suppose $f\in H^{\alpha}_q(T)$ and $f(0)=0$.
Take a sequence $f_n \in C^1([0,T])$ satisfying $f_n(0)=0$ and $f^n \to f$ in $H^{\alpha}_q(T)$.
Then  we have $f_n \to f$ and $\frac{d}{dt} (I^{1-\alpha}f_n)\to \partial^{\alpha}_t f$  in $L_q((0,T))$.
It follows that $I^{1-\alpha}f^n$ converges to  $I^{1-\alpha}f$ in the space $H^1_q((0,T))$. It also follows that
$$
I^{1-\alpha}f(t)=\int^t_0 \partial^{\alpha}_tf(s) ds, \quad t \leq T \quad  (a.e.).
$$
Since $1-1/q>0$, by the Sobolev embedding theorem $I^{1-\alpha}f(t)$ is continuous in $t$ and thus the above equality holds for all $t$,
which implies $I^{1-\alpha}f(0)=0$.

Next we prove the ``if" part  of (i). By the assumption, we can choose a function $g\in L_q((0,T))$ so that
$$
I^{1-\alpha}f(t)=\int^t_0 g(s)ds, \quad \forall \, t\leq T.
$$
Take a sequence of functions $g_n \in C^1([0,T])$ which converges to $g$ in $L_q((0,T))$. Define
$f_n=I^{\alpha}g_n$. Then $f_n\in C^1([0,T])$, $f_n(0)=0$, and
$$
f_n \to I^\alpha g=f \quad \text{in} \quad L_q((0,T)).
$$
Also
$$
\partial^{\alpha}_tf_n=\partial^{\alpha}_tI^{\alpha}g_n=g_n \to \partial^{\alpha}_t f \quad \text{in} \quad L_q((0,T)).
$$

(ii) The proof is very similar to (i). We only explain how one can choose a sequence to prove  the ``if" part.

By the assumption $f\in H^1_q(T)$ and $q>1$, we may assume $f \in C([0,T])$.
Take $g\in L_q((0,T))$ so that $I^{2-\alpha}f'=\int^t_0 g(s) ds$.
We choose   $g_n \in C^2([0,T])$ which converges to $g$ in $L_q((0,T))$.  Define
$f_n(t)=I^{\alpha}g_n(t)+f(0)$. Then $f'_n(0)=I^{\alpha-1}g_n (0)=0$, $f_n \to f $ in $L_q((0,T))$, and $\partial^\alpha_t f_n$ is a Cauchy sequence in $L_q((0,T))$. Thus $f\in H^{\alpha}_p(T)$ and $f'(0)=0$.

The lemma is proved.
\end{proof}

Next we introduce our solution space $\bH^{\alpha,k}_{q,p}(T)$ and related notation.
Roughly speaking, we write $u\in \bH^{\alpha,k}_{q,p}(T)$ if and only if
$$
u, \,\, \partial^{\alpha}_tu,  \,\, D^k_x u \,\,\in \,L_q((0,T),L_p).
$$
For $p,q>1$ and $k=0,1,2,\cdots$, we denote
$$
H^k_p=H^k_p(\fR^d)=\{u \in L_p(\fR^d) : D^{\gamma}_x u\in L_p(\fR^d), |\gamma|\leq k\},
$$
$$
\bH^{0,k}_{q,p}(T)=L_q((0,T),H^k_p), \quad \bL_{q,p}(T):=\bH^{0,0}_{q,p}(T),
$$
where $D^\gamma_x$ are derivatives in the distributional sense.
Thus $u\in \bH^{0,k}_{q,p}(T)$ if and only if $u(t,\cdot)$ is $H^k_p$-valued measurable function satisfying
$$
\|u\|_{\bH^{0,k}_{q,p}(T)}:=\left[\int^T_0 \|u\|^q_{H^k_p}ds\right]^{1/q} <\infty.
$$
We extend the real-valued time fractional Sobolev space to $L_p(\fR^d)$-valued one. In other words,
we consider the completion of $C^2([0,T] \times \fR^d) \cap \bL_{q,p}(T)$ with respect to norm
$$
\|\cdot\|_{\bL_{q,p}(T)} + \|\partial_t^\alpha \cdot\|_{\bL_{q,p}(T)}
$$
in $\bL_{q,p}(T)$.
\begin{defn}
For $\alpha \in (0,2)$ we say $u \in \bH^{\alpha,0}_{q,p}(T)$ if and only if there exists a sequence $u_n \in C^2([0,T] \times \fR^d) \cap \bL_{q,p}(T)$ so that
$\sup_n \|\partial_t^\alpha u_n\|_{\bL_{q,p}(T)} < \infty$, and
$$
\|u-u_n\|_{\bL_{q,p}(T)} \to 0 \quad \text{and} \quad \|\partial_t^\alpha u_n-\partial_t^\alpha u_m\|_{\bL_{q,p}(T)} \to 0
$$
as $n$ and $m$ go to infinity. We call this sequence $u_n$ a defining sequence of $u$.
For $u \in \bH^{\alpha,0}_{q,p}(T)$, we define
$$
\partial_t^\alpha u = \lim_{n \to \infty}\partial_t^\alpha u_n \quad \text{in} \quad \bL_{q,p}(T),
$$
where $u_n$ is a defining sequence of $u$.
Obviously $\bH^{\alpha,0}_{q,p}(T)$ is a Banach space
with the norm
$$
\|u\|_{\bH^{\alpha,0}_{q,p}(T)}=\|u\|_{\bL_{q,p}(T)} +\|\partial_t^\alpha u\|_{\bL_{q,p}(T)}.
$$
\end{defn}
\begin{defn}
                    \label{sol spa defn}
For $u \in \bH^{\alpha,0}_{q,p}(T)$, we say that $u(0, x)= 0$ if any only if there exists a defining sequence $u_n$ such that
$$
u_n(0,x)= 0  \quad \quad \forall x \in \fR^d, \quad \forall n \in \bN.
$$
Similarly we say that $u(0, \cdot)= 0$ and $\frac{\partial}{\partial t}u(0, \cdot)= 0$ if any only if there exists a defining sequence $u_n$ such that
$$
u_n(0,x)= 0  \quad \text{and} \quad \frac{\partial}{\partial t}u_n(0,x)=0 \quad \quad \forall x \in \fR^d, \quad \forall n \in \bN.
$$
\end{defn}

Let
$$
\bH^{\alpha,k}_{q,p}(T) := \bH^{\alpha,0}_{q,p}(T) \cap \bH^{0,k}_{q,p}(T)
$$
and
$\bH^{\alpha,k}_{q,p,0}(T)$ be the subspace of $\bH^{\alpha,k}_{q,p}(T)$ such that
$$
u(0,\cdot) =0 \quad \quad \text{if} \quad \alpha \in (0,1]
$$
and
$$
u(0,\cdot) =0 \quad \text{and} \quad \frac{\partial}{\partial t}u(0,\cdot)=0 \quad \quad\text{if} \quad \alpha \in (1,2).
$$

\begin{thm}
               \label{lem 9.21.11}
%
%
%
(i) The space $\bH^{\alpha,k}_{q,p}(T)$ is a Banach space with the  norm
$$
\|u\|_{\bH^{\alpha,k}_{q,p}(T)} :=\|u\|_{\bH^{0,k}_{q,p}(T)}+\|u\|_{\bH^{\alpha,0}_{q,p}(T)}.
$$

\noindent
(ii)  The space $\bH^{\alpha,k}_{q,p,0}(T)$ is a closed subspace of $\bH^{\alpha,k}_{q,p}(T)$.

\noindent
(iii)  $C_c^\infty(\fR^{d+1}_+)$ is dense in $\bH^{\alpha,k}_{q,p,0}(T)$.

\noindent
(iv) For  any $u\in \bH^{\alpha,2}_{q,p,0}(T)$,
\begin{equation}
              \label{eqn 9.21}
\|u(t)\|_{L_p}\leq N(\alpha)\int^t_0 (t-s)^{\alpha-1}\|\partial^{\alpha}_t u(s)\|_{L_p}ds, \quad \quad t\leq T \quad (a.e.).
\end{equation}
Consequently,
$$
\|u\|^q_{\bL_{q,p}(t)}\leq N(q,\alpha,T)\int^t_0 \int^s_0 (s-r)^{\alpha-1}\|\partial^{\alpha}_tu(r)\|^q_{L_p}drds, \quad \quad \forall \, t\leq T.
$$
\end{thm}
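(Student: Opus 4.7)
My plan is to address (i)--(iv) in order. For (i) and (ii) I would rely on abstract Banach-space arguments. Given a Cauchy sequence $\{u_n\}$ in $\bH^{\alpha,k}_{q,p}(T)$, Cauchyness in the known Banach space $\bH^{0,k}_{q,p}(T)$ and Cauchyness of $\{\partial^\alpha_t u_n\}$ in $\bL_{q,p}(T)$ produce limits $u$ and $v$; I would identify $u$ as an element of $\bH^{\alpha,k}_{q,p}(T)$ with $\partial^\alpha_t u=v$ by performing a diagonal extraction among the defining sequences furnished for each $u_n$ by the definition of $\bH^{\alpha,0}_{q,p}(T)$. For (ii), the same diagonal procedure, now further restricted to defining terms with zero initial trace (which each $u_n\in\bH^{\alpha,k}_{q,p,0}(T)$ admits by definition), keeps the limit in the subspace.

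For (iii), which is the main technical step, I would, given $u\in\bH^{\alpha,k}_{q,p,0}(T)$, extend $u$ and $f:=\partial^\alpha_t u$ by zero for $t<0$ (consistent with the zero initial trace) and form
$$
u_{\eta,\delta,R}(t,x):=\zeta_R(x)\,\bigl(u *_x \phi_\eta *_t \psi_\delta\bigr)(t,x),
$$
where $\phi_\eta$ is a standard mollifier in $x$, $\psi_\delta$ is a \emph{one-sided} mollifier in $t$ supported in $[\delta,2\delta]$, and $\zeta_R\in C_c^\infty(\fR^d)$ is a spatial cutoff equal to $1$ on $B_R$. The one-sidedness of $\psi_\delta$ forces $u_{\eta,\delta,R}\equiv0$ on $\{t\le\delta\}$. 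Because $\partial^\alpha_t$, the spatial convolution $*_x\phi_\eta$, and the temporal convolution $*_t\psi_\delta$ either act in disjoint variables or are both convolutions in $t$, they will commute pairwise; hence $\partial^\alpha_t u_{\eta,\delta,R}=\zeta_R\cdot\bigl(f *_x \phi_\eta *_t \psi_\delta\bigr)$, and standard mollification then yields $u_{\eta,\delta,R}\to u$ in $\bH^{0,k}_{q,p}(T)$ and $\partial^\alpha_t u_{\eta,\delta,R}\to f$ in $\bL_{q,p}(T)$ as $\eta,\delta\to0$ and $R\to\infty$. To upgrade to an element of $C_c^\infty(\fR^{d+1}_+)$, I would multiply by a time cutoff $\chi\in C_c^\infty(\fR)$ equal to $1$ on $[\delta,T]$ and supported in $[\delta/2,T+1]$: since the function already vanishes on $[0,\delta]$, this cutoff leaves its values on $[0,T]$ unchanged, and because $\partial^\alpha_t$ at time $t\le T$ only probes $[0,t]\subset[0,T]$, it will leave $\partial^\alpha_t$ on $[0,T]$ unchanged as well.

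Part (iv) is the analytic heart. I would pick a defining sequence $u_n\in C^2([0,T]\times\fR^d)$ for $u\in\bH^{\alpha,2}_{q,p,0}(T)$, with $u_n(0,\cdot)=0$ (and $\partial_t u_n(0,\cdot)=0$ if $\alpha\in(1,2)$). For $\alpha\in(0,1)$, \eqref{caputo} gives $\partial^\alpha_t u_n=D^\alpha_t u_n$, and since $I^{1-\alpha}u_n$ is absolutely continuous with $I^{1-\alpha}u_n(0,\cdot)=0$, \eqref{e:ID} would yield the pointwise-in-$x$ identity $u_n=I^\alpha\partial^\alpha_t u_n$. For $\alpha\in(1,2)$, applying the same argument to $\partial_t u_n$ with $\beta:=\alpha-1$ gives $\partial_t u_n=I^\beta\partial^\alpha_t u_n$, and integrating in $t$ (using $u_n(0,\cdot)=0$) would yield $u_n=I^{1+\beta}\partial^\alpha_t u_n=I^\alpha\partial^\alpha_t u_n$ again. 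Minkowski's integral inequality in $L_p(\fR^d)$ would then give
$$
\|u_n(t)\|_{L_p}\le\frac{1}{\Gamma(\alpha)}\int_0^t(t-s)^{\alpha-1}\|\partial^\alpha_s u_n(s)\|_{L_p}\,ds,
$$
and I would pass to a subsequence along which $\bL_{q,p}(T)$-convergence of $u_n$ and $\partial^\alpha_t u_n$ strengthens to a.e.\ pointwise $L_p$-convergence (using \eqref{eqn 7.03.1} for the right-hand side) to transport the bound to $u$, establishing \eqref{eqn 9.21}. For the $\bL_{q,p}$-consequence I would raise both sides to the $q$-th power and apply H\"older's inequality against the measure $(s-r)^{\alpha-1}\,dr$ on $[0,s]$; the total mass $s^\alpha/\alpha\le T^\alpha/\alpha$ is absorbed into the constant, and integrating in $s\in(0,t)$ produces the stated inequality. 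The principal obstacle I anticipate is the non-locality of $\partial^\alpha_t$ in (iii): naive time cutoffs do not commute with the fractional derivative, so the construction must be arranged so any cutoff acts only where the approximant already vanishes.
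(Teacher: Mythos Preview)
Your proposal is correct and follows essentially the same route as the paper. In (iii) the paper uses exactly your one-sided time mollifier (supported in $(\varepsilon_1,2\varepsilon_1)$), spatial mollifier, spatial cutoff, and a harmless time cutoff, together with the commutation $\partial^\alpha_t(u*\psi_\delta)=(\partial^\alpha_t u)*\psi_\delta$; in (iv) the paper invokes (iii) to reduce to $C_c^\infty(\fR^{d+1}_+)$ and then checks $u=I^\alpha\partial^\alpha_t u$ and applies Minkowski, which is your argument with the defining sequence replaced by the dense class just constructed.
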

\begin{proof}
(i) This is obvious because both $\bH^{\alpha,0}_{q,p}(T)$ and $\bH^{0,k}_{q,p}(T)$ are Banach spaces.

(ii)
Suppose $u_n\in \bH^{\alpha,k}_{q,p,0}(T)$ and  $u \in \bH^{\alpha,k}_{q,p}(T)$ so that $u_n \to u$ in $\bH^{\alpha,k}_{q,p}(T)$.
Since $u_n \in \bH^{\alpha,k}_{q,p,0}(T)$ for each $n$, we can find a $v_n \in C^2([0,T]\times \fR^d) \cap \bL_{q,p}(T)$ so that
$$
\|u_n-v_n\|_{\bL_{q,p}(T)} < \frac{1}{n} \quad \text{and} \quad \|\partial_t^\alpha u_n-\partial_t^\alpha v_n\|_{\bL_{q,p}(T)} < \frac{1}{n},
$$
$$
v_n(0,x)= 0  \quad \quad \forall x \in \fR^d, \quad  \text{if} \quad \alpha \in (0,1],
$$
and
$$
v_n(0,x)= 0  \quad \text{and} \quad \frac{\partial}{\partial t}v_n(0,x)=0 \quad \quad \forall x \in \fR^d, \quad \text{if} \quad \alpha \in (1,2).
$$
Therefore $u\in \bH^{\alpha,k}_{q,p,0}(T)$ because obviously
$$
\|v_n-u\|_{\bH^{\alpha,0}_{q,p}(T)} \to 0 \quad \text{as} \quad  n \to \infty.
$$
This certainly proves (ii).

(iii) We
take nonnegative smooth functions $\eta_1 \in C_c^\infty((1,2))$, $\eta_2 \in C_c^\infty(\fR^d)$, and $\eta_3 \in C_c^\infty(\fR^d)$ so that
$$
\int_0^\infty \eta_1(t)~dt=1, \quad \int_{\fR^d} \eta_2(x)~dx=1, \quad \text{and} \quad \eta_3(x)=1\quad \text{if}\quad |x|\leq 1.
$$
For $\varepsilon_1, \varepsilon_2, \varepsilon_3 >0$, we define
$$
\eta_{1,\varepsilon_1}(t)= \varepsilon_1^{-1} \eta_1(t/\varepsilon_1), \quad \eta_{2,\varepsilon}(x)= \varepsilon_2^{-d} \eta_2(x/\varepsilon_2),
$$
$$
u^{\varepsilon_1}(t,x)=\int_0^\infty u(s,x) \eta_{1,\varepsilon_1}(t-s)ds,
$$
$$
u^{\varepsilon_1 ,\varepsilon_2}(t,x)=\int_{\fR^d}\int_0^\infty u(s,y) \eta_{1,\varepsilon_1}(t-s)\eta_{2,\varepsilon_2}(x-y)dsdy,
$$
and
$$
u^{\varepsilon_1 ,\varepsilon_2, \varepsilon_3}(t,x)
=\eta(t)\eta_{3}(\varepsilon_3 x)\int_{\fR^d}\int_0^\infty u(s,y) \eta_{1,\varepsilon_1}(t-s)\eta_{2,\varepsilon_2}(x-y)dsdy,
$$
where $\eta  \in C^\infty([0,\infty))$ such that $\eta(t)=1$ for all $t \leq T$ and vanishes for all large $t$.
Due to the condition $\eta_1 \in C_c^\infty((1,2))$, it holds that
$$
u^{\varepsilon_1 ,\varepsilon_2, \varepsilon_3}(t,x) =0\quad \quad \forall t <\varepsilon_1, \quad \forall x \in \fR^d.
$$
We can easily check that for any $u \in \bH^{\alpha,k}_{q,p,0}(T)$
$$
\partial_t^\alpha u^{\varepsilon_1} (t) = (\partial_t^\alpha u)^{\varepsilon_1}(t).
$$
Hence for any given $\varepsilon>0$ we have
\begin{align*}
&\|u-u^{\varepsilon_1,\varepsilon_2,\varepsilon_3}\|_{\bH^{\alpha,k}_{q,p}(T)}  \\
&\leq \|u-u^{\varepsilon_1}\|_{\bH^{\alpha,k}_{q,p}(T)}
+\|u^{\varepsilon_1}-u^{\varepsilon_1,\varepsilon_2}\|_{\bH^{\alpha,k}_{q,p}(T)}
+\|u^{\varepsilon_1,\varepsilon_2}-u^{\varepsilon_1,\varepsilon_2,\varepsilon_3}\|_{\bH^{\alpha,k}_{q,p}(T)}  \leq \varepsilon
\end{align*}
if $\varepsilon_1$, $\varepsilon_2$, and $\varepsilon_3$ are small enough.
Therefore (iii) is proved.

(iv)
Due to (iii), it is enough to prove \eqref{eqn 9.21} only for  $u \in C_c^\infty(\fR^{d+1})$.
Denote $f:=\partial^{\alpha}_tu$. One can easily check
 $$
 u(t)=\int^t_0k_{\alpha}(t-s)f(s)ds, \quad  \forall t\leq T
 $$
in the space $L_p$, which clearly implies \eqref{eqn 9.21} due to the generalized Minkowski inequality.
The theorem is proved.
\end{proof}

\begin{assumption}
                                \label{ass 9.19}
    Let $f(u)=f(t,x,u)$ and $f_0=f(t,x,0)$.

\noindent
(i)  There exist   $0=T_0 <T_1 <\cdots < T_{\ell}=T$ and functions $a^{ij}_k(t,x)$ such that
$$
a^{ij}(t,x) = \sum_{k=1}^{\ell} a^{ij}_k(t,x) I_{(T_{k-1},T_k]}(t), \quad (a.e.).
$$

\noindent
(ii) There exist constants $\delta, K>0$ so that for any $k$, $t$, and $x$

\begin{equation}
                         \label{eqn elliptic}
\delta |\xi|^2 \leq a_k^{ij}(t,x)\xi^i\xi^j \leq K|\xi|^2, \quad \forall \xi\in \fR^d,
\end{equation}
and
$$
|a_k^{ij}(t,x)|+|b^i(t,x)|+|c(t,x)|\leq K.
$$

\noindent
(iii) The coefficients $a_k^{ij}$ are uniformly continuous on $(t_{k-1},t_k] \times \fR^d$ for all $k=1,\ldots,\ell$ and $i,j =1,\ldots,d$.

\noindent
(iv) $f_0\in \bL_{q,p}(T)$ and $f(u)$ satisfies the following continuity property: for any $\varepsilon>0$, there exists a constant $K_{\varepsilon}>0$ such that
\begin{equation}
     \label{same}
\|f(t,x,u)-f(t,x,v)\|_{L_p}\leq \varepsilon \|u-v\|_{H^2_p}+K_{\varepsilon}\|u-v\|_{L_p},
\end{equation}
for any $(t,x)$ and  $u,v\in H^2_p$.
\end{assumption}

If $p\neq q$ then we need an additional condition (see the comment below \eqref{equiv rel}  for the reason).
\begin{assumption}
                 \label{ass 9.21}
                 If $p\neq q$ then
                $\lim_{|x|\to \infty}a^{ij}(t,x)$ exists uniformly in $t\in (0,T)$.
\end{assumption}

Here is the main result of this article. The proof  will be given in Section \ref{pf main thm}.

\begin{thm}
                        \label{main theorem}
Let $p,q>1$ and Assumptions \ref{ass 9.19} and \ref{ass 9.21} hold. Then the equation
\begin{equation}
               \label{main eqn}
 \partial^{\alpha}_tu=a^{ij}u_{x^ix^j}+b^iu_{x^i}+cu+f(u), \quad t>0
 \end{equation}
 admits a unique solution $u$ in the class $\bH^{\alpha,2}_{q,p,0}(T)$, and for this solution it holds that
 \begin{equation}
                             \label{main estimate}
 \|u\|_{\bH^{\alpha,2}_{q,p}(T)}\leq N_0\|f_0\|_{\bL_{q,p}(T)},
 \end{equation}
 where $N_0$ depends only on $d,p,q,\delta,K,K_{\varepsilon}, T, \ell$, and the modulus of continuity of $a^{ij}_k$.
\end{thm}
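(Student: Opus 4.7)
The plan is to follow the roadmap laid out in the introduction: reduce step by step from the full quasilinear equation to the constant coefficient model equation $\partial^{\alpha}_t u = \Delta u + f$, prove the a priori estimate there, and then bootstrap back. First I would treat the model equation with $a^{ij} = \delta^{ij}$, $b^i = c = 0$, $f = f(t,x)$ independent of $u$. Applying the Fourier transform in $x$ converts the equation to a scalar fractional ODE whose solution is given by a convolution $u(t,x) = \int_0^t \int_{\fR^d} p(t-s, x-y)\, f(s,y)\, dy\, ds$ with a Mittag--Leffler-type kernel, and in the same way $u_{x^i x^j}$ is a convolution of $f$ with a kernel $q_{ij}(t,x)$. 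Plancherel then gives the $L_2((0,T), L_2)$ estimate immediately.

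Next I would upgrade from $L_2$ to $L_p$ with $p=q$. Using sharp size and regularity estimates for $q_{ij}$ (pointwise bounds and cancellation in $x$; these are the kernel estimates promised in Section~6), one checks a mean oscillation (BMO) bound for $u_{xx}$ on parabolic-type cylinders adapted to the scaling $|x| \sim t^{\alpha/2}$. Marcinkiewicz interpolation between the $L_2$ bound and the BMO bound, via the Fefferman--Stein sharp function theorem, yields $\|u_{xx}\|_{L_p((0,T) \times \fR^d)} \le N \|f\|_{L_p}$ for $p \ge 2$, and duality gives $p \in (1,2)$. The estimate of $\partial_t^\alpha u$ follows from the equation. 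To pass from $p=q$ to general $(p,q)$, view the map $f \mapsto u_{xx}$ as a singular integral in the time variable with values in the operator algebra on $L_p(\fR^d)$; verifying the Hörmander/Calderon--Zygmund condition for this vector-valued kernel (again via the Section~6 estimates) and applying the Benedek--Calder\'on--Panzone extension of the Calderon--Zygmund theorem delivers the $L_q(L_p)$-bound.

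The third stage is to restore variable leading coefficients. On each time slab $(T_{k-1}, T_k]$ where $a^{ij}_k$ is uniformly continuous (and, when $p \neq q$, has a limit at infinity, which is where Assumption~\ref{ass 9.21} enters), I would freeze the coefficients on small parabolic cylinders, apply the constant-coefficient estimate, and control the error by the modulus of continuity of $a^{ij}_k$. A partition of unity in $x$ plus the usual small-oscillation argument absorbs the perturbation term into the left-hand side; the lower order operators $b^i u_{x^i}$ and $c u$ are absorbed using the time integration bound \eqref{eqn 9.21} from Theorem~\ref{lem 9.21.11}(iv), which provides a Gronwall-type control on $\|u\|_{\bL_{q,p}(t)}$ in terms of $\|\partial_t^\alpha u\|_{\bL_{q,p}(t)}$. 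Iterating the estimate over $k = 1, \ldots, \ell$ using the continuity of the solution at each $T_k$ (the Caputo derivative propagates the data) yields the linear a priori estimate on $(0,T)$.

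Finally, for the nonlinear term $f(t,x,u)$, I would set up a contraction: given $v \in \bH^{\alpha,2}_{q,p,0}(T)$, define $\Phi(v) = u$ as the unique solution of the linear problem with right-hand side $f(t,x,v)$. Assumption~\ref{ass 9.19}(iv) says $\|f(\cdot, v) - f(\cdot, w)\|_{\bL_{q,p}} \le \varepsilon\|v - w\|_{\bH^{0,2}_{q,p}} + K_\varepsilon \|v-w\|_{\bL_{q,p}}$; combining the linear estimate with the time-integration bound and choosing $\varepsilon$ small, $\Phi$ becomes a contraction on a short interval $(0, T')$, and standard continuation using $\ell$ time slabs extends the solution to $(0,T)$. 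The main obstacle throughout is the BMO/kernel step: obtaining the sharp pointwise and cancellation estimates on $q_{ij}$ uniformly in $\alpha \in (0,2)$, particularly across the parabolic/wave threshold $\alpha = 1$, and verifying the Hörmander condition for the time-kernel so that the vector-valued Calderon--Zygmund theorem applies cleanly.
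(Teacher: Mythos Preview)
Your proposal follows essentially the same route as the paper: $L_2$ estimate for the model equation via a Plancherel-type argument, BMO via the kernel estimates, $L_p$ via Fefferman--Stein/Marcinkiewicz plus duality, $L_q(L_p)$ via the vector-valued Calder\'on--Zygmund theorem, then perturbation for variable coefficients and a fixed point for the nonlinearity. Two implementation details differ, both driven by the nonlocality of $\partial_t^\alpha$. First, for variable coefficients the paper does \emph{not} freeze on parabolic cylinders: it treats the spatial dependence by a partition of unity in $x$ alone (Step~4, with $a^{ij}=a^{ij}(x)$), and then handles the time dependence by a separate induction over short time intervals using extension and restriction lemmas (Lemmas~\ref{cor 9.21.10} and~\ref{restric zero}); a partition of unity in $t$ would not interact well with the Caputo derivative. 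Second, for the nonlinear term the paper avoids short-time contraction plus continuation and instead shows that a sufficiently high iterate $\mathcal{R}^m$ of the solution map is a contraction on the full interval $[0,T]$, bypassing the continuation step (which is delicate here since the equation on $[T',T]$ remembers the entire history on $[0,T']$). Your approach can be made to work with care, but the paper's choices sidestep these memory issues more cleanly.
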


\begin{remark}
(i) Due to the definition of our solution space $\bH^{\alpha,2}_{q,p,0}(T)$, the zero initial condition is given to equation \eqref{main eqn}, that is
 $$
u(0,x) =0  \quad \text{and additionally}\quad \frac{\partial u}{\partial t}(0,x)=0 \,\,\, \text{if}\,\, \alpha>1.
$$
%

(ii) Some examples of $f(u)$ satisfying \eqref{same} can be found e.g. in \cite{Krylov1999}. For instance, let
  $\kappa:=2-d/p>0$, $h=h(x)\in L_p$, and
  $$
  f(x,u):=h(x) \sup_x |u|.
  $$
  Then by the Sobolev embedding theorem,
  \begin{align}
                 \nonumber
  \|f(u)-f(v)\|_{L_p} &\leq \|h\|_{L_p} \sup_x |u-v|\leq N\|u-v\|_{H^{\kappa}_p}
  \leq \varepsilon  \|u-v\|_{H^2_p}+K \|u-v\|_{L_p}.
  \end{align}
Similarly one can show that $f(t,x,u):=a(t,x)(-\Delta)^{\delta}u$ also satisfies \eqref{same} if $a(t,x)$ is bounded and  $\delta\in (0,1)$. \end{remark}

\mysection{Some Preliminaries}
                        \label{section 2}
In this section we introduce some estimates of kernels related to the representation of a solution, and we also present an $L_2$-estimate of a solution.

 The Mittag-Leffler function $E_{\alpha}(z)$ is  defined as
$$
E_{\alpha}(z)=\sum_{k=0}^{\infty}\frac{z^k}{\Gamma(\alpha k+1)}, \quad z \in \bC, \,\alpha>0.
$$
The series converges for any $z\in \bC$, and $E_{\alpha}(z)$ is an entire function. Using
 $$
 \partial^{\alpha}_t t^{\beta}=\frac{\Gamma(\beta+1)}{\Gamma(\beta+1-\alpha)}t^{\beta-\alpha}, \quad \beta \geq \alpha
 $$
one can easily check that   for any constant $\lambda$,
\begin{equation*}
\varphi(t):=E_{\alpha}(\lambda t^{\alpha})
\end{equation*}
satisfies $\varphi(0)=1$ (also $\varphi'(0)=0$ if $\alpha>1$) and
\begin{equation*}
\partial^{\alpha}_t \varphi=\lambda \varphi, \quad t>0.
\end{equation*}
By taking the Fourier transform to the equation
$$
\partial^{\alpha}_tu=\Delta u, \quad t>0, \quad u(0)=h, \quad (\text{and}\, u'(0)=0 \,\, \text{if}\, \alpha>1)
$$
one can formally get $\cF(u)(t,\xi)=E_{\alpha}(-t^{\alpha}|\xi|^2)\tilde{h}$. Thus it is naturally needed to find an integrable function $p(t,x)$ satisfying $\cF(p(t,\cdot))(\xi)=E_{\alpha}(-t^{\alpha}|\xi|^2)$. It is known that   (see e.g. (12) in  \cite[Theorem 1.3-4]{Dj})

\begin{equation}
          \label{mittag}
E_{\alpha}(-t)\sim \frac{1}{1+|t|}, \quad \quad t>0.
\end{equation}
Therefore, $E_{\alpha}(-t^{\alpha}|\xi|^2)$  is integrable  only if $d=1$. If $d\geq 2$ then $\cF^{-1} (E_{\alpha}(-t^{\alpha}|\xi|^{2}))$ might be
 understood as an improper integral. However, in this article we do not consider $\cF^{-1} (E_{\alpha}(-t^{\alpha}|\xi|^{2}))$.

 \begin{lem}
                  \label{p exists}
(i) Let $d\geq 1$ and $\alpha\in (0,2)$. Then
there exists a function $p(t,x)$ such that $p(t,\cdot)$ is integrable in $\fR^d$ and
$$
\cF( p(t,\cdot))(\xi)=E_{\alpha}(-t^{\alpha}|\xi|^2).
$$
(ii)  Let  $m,n=0,1,2,\cdots$ and denote $R=t^{-\alpha}|x|^2$.
Then there exist constants $C$ and $\sigma$ depending only on $m,n,d$, and $\alpha$ so that
 if $R\geq 1$
\begin{equation}
                                                                               \label{p-1}
|\partial_{t}^{n}D_{x}^{m}p(t,x)|\leq N t^{\frac{-\alpha(d+m)}{2}-n}\exp\{-\sigma t^{-\frac{\alpha}{2-\alpha}}|x|^{\frac{2}{2-\alpha}}\},
\end{equation}
and  if $R\leq 1$
\begin{equation}
         \label{p-3}
|\partial_{t}^{n}D_{x}^{m}p(t,x)| \leq N |x|^{-d-m}t^{-n} \left(R  +R \ln R \cdot 1_{d=2,m=0}+ R^{1/2} \cdot 1_{d=1,m=0}\right).
\end{equation}
\end{lem}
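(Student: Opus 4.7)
The plan is to exploit the scaling relation
\[p(t,x)=t^{-\alpha d/2}\,p(1,\,t^{-\alpha/2}x),\]
which follows from $\cF(p(t,\cdot))(\xi)=E_\alpha(-t^\alpha|\xi|^2)=E_\alpha(-|t^{\alpha/2}\xi|^2)$ together with a change of variable, to reduce everything to the analysis of a single reference kernel $p_0(x):=p(1,x)$. The dimensionless parameter $R=t^{-\alpha}|x|^2$ appearing in (ii) is precisely $|y|^2$ with $y:=t^{-\alpha/2}x$, so the two regimes $R\geq 1$ and $R\leq 1$ correspond exactly to the far- and near-field behavior of $p_0$.

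For part (i), and to set up (ii), I would use the Hankel contour representation
\[E_\alpha(z)=\frac{1}{2\pi i\alpha}\int_{\text{Ha}}\frac{e^{\zeta^{1/\alpha}}}{\zeta-z}\,d\zeta,\]
where $\text{Ha}$ encircles the negative real axis. For $\alpha\in(0,1)$ this yields the Bochner subordination formula: there is a one-sided $\alpha$-stable density $g_\alpha\geq 0$ such that $E_\alpha(-\lambda)=\int_0^\infty e^{-\lambda s}g_\alpha(s)\,ds$, whence
\[p_0(x)=\int_0^\infty (4\pi s)^{-d/2}e^{-|x|^2/(4s)}g_\alpha(s)\,ds,\]
and $\|p_0\|_{L_1}=\int_0^\infty g_\alpha(s)\,ds=1$. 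For $\alpha\in[1,2)$ subordination is unavailable as a positive measure, but inserting $1/(\zeta+|\xi|^2)=\int_0^\infty e^{-s(\zeta+|\xi|^2)}\,ds$ on the shifted portion of the contour still produces an absolutely convergent double integral whose inverse Fourier transform is an explicit, integrable, smooth (in $x$) function, which proves (i) for all $\alpha\in(0,2)$.

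For part (ii), differentiating $n$ times in $t$ and $m$ times in $x$ under the contour integral and invoking scaling gives
\[\partial_t^nD_x^m p(t,x)=t^{-\alpha(d+m)/2-n}\,F_{m,n}(y),\qquad y=t^{-\alpha/2}x,\]
so it suffices to estimate $F_{m,n}(y)$ in terms of $|y|$. When $|y|\geq 1$, apply the steepest-descent/saddle-point method: on a deformed contour $\zeta=re^{\pm i\theta}$ with $\theta\in(\pi\alpha/2,\pi)$, the effective phase (after carrying out the Laplace integration in $s$ and the Fourier radial integration) has a unique saddle at $|\zeta|\sim|y|^{2/(2-\alpha)}$, whose contribution produces the stretched-exponential bound $\exp(-\sigma|y|^{2/(2-\alpha)})=\exp(-\sigma R^{1/(2-\alpha)})$, matching \eqref{p-1}. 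When $|y|\leq 1$, subtract the leading Taylor terms of $E_\alpha(-|\xi|^2)=1-|\xi|^2/\Gamma(\alpha+1)+\cdots$; the inverse Fourier transform of the remainder behaves near the origin like the Riesz potential of order $2$, which gives $\sim|x|^{2-d}$ for $d\geq 3$, the $|x|^{2-d}\ln(1/|x|)$ correction for $d=2$ (hence the $R\ln R\cdot 1_{d=2,m=0}$ term), and boundedness for $d=1$ with an $R^{1/2}\cdot 1_{d=1,m=0}$ correction coming from the half-power in the radial variable; this reproduces \eqref{p-3} term by term.

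The main technical obstacle is the far-field estimate \eqref{p-1}: making the saddle-point analysis uniform in $(m,n)$ and extracting the sharp rate $\sigma$ requires careful choice of contour and a joint estimate of the contour integrand together with the Fourier radial factor, rather than two successive applications of Laplace's method. The integrability in (i) and the near-field estimates in (ii) are comparatively routine once the contour representation is in hand, aside from the separate bookkeeping needed to capture the borderline-dimension corrections $d=1,2$ in \eqref{p-3}.
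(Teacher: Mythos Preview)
Your approach is genuinely different from the paper's and, in outline, workable. The paper defines $p(t,x)$ \emph{ab initio} as a Fox $H$-function,
\[
p(t,x)=\pi^{-d/2}|x|^{-d}\,\textnormal{H}^{20}_{12}\!\left[\tfrac14 t^{-\alpha}|x|^2\ \Big|\begin{array}{cc}(1,\alpha)\\(\tfrac d2,1)&(1,1)\end{array}\right],
\]
and then (a) verifies $\cF(p(t,\cdot))(\xi)=E_\alpha(-t^\alpha|\xi|^2)$ via the Hankel--Bessel formula for radial Fourier transforms (for $d\ge 2$) or via Laplace transforms and Macdonald functions (for $d=1$), and (b) reads off both the large-$R$ stretched-exponential decay and the small-$R$ expansion directly from cited asymptotic theorems for $H$-functions (residue expansions for the Mellin--Barnes integral). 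In effect the paper outsources your saddle-point and Taylor-remainder analysis to the $H$-function literature; the cancellation that kills the $R^{d/2}$ term when $m\ge1$ (your Riesz-potential heuristic) appears in the paper as the algebraic identity $d(-d/2)^{k+l-1}+2(-d/2)^{k+l}=0$ applied to residues.

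What your route buys is self-containment and a clearer probabilistic picture in the range $\alpha\in(0,1)$ via subordination. What the paper's route buys is that the hardest steps (your uniform-in-$(m,n)$ saddle-point estimate and the dimension-dependent bookkeeping near $R=0$) are absorbed into two black-box theorems on $H$-functions. Two places in your sketch would need tightening to become a proof: for $\alpha\in[1,2)$ the identity $1/(\zeta+|\xi|^2)=\int_0^\infty e^{-s(\zeta+|\xi|^2)}\,ds$ requires $\Re\zeta>0$, which fails on the far arm of a Hankel contour, so you must deform before inserting it; and your scaling identity $\partial_t^n D_x^m p = t^{-\alpha(d+m)/2-n}F_{m,n}(y)$ is correct but $F_{m,n}$ is a specific polynomial combination of $p_0$ and its derivatives (Euler-operator iterates), and the saddle-point bound must be shown to survive this combination, not just hold for $p_0$ itself.
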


\vspace{4mm}

 By \eqref{p-1},  $p(t,x)$ is absolutely continuous on $(0,T)$   and $\lim_{t\to 0}p(t,x)=0$ if $x\neq 0$. Thus we can define
 $$
 q(t,x):=\begin{cases}I^{\alpha-1}p(t,x), & \alpha\in(1,2) \\ D_{t}^{1-\alpha}p(t,x), &  \alpha\in(0,1). \end{cases}
$$
Since $p(0,x)=0$ for $x\neq 0$, $D_{t}^{1-\alpha}p(t,x)=\partial^{1-\alpha}_t p(t,x)$.

\begin{lem}
				\label{prop:kernel estimate}
(i)  Let $d\geq 1$, $\alpha\in (0,2)$, and $m,n=0,1,2,\cdots$. Denote $R=t^{-\alpha}|x|^2$.
Then there exist constants $N$ and $\sigma$ depending only on $m$, $n$, $d$, and $\alpha$ so that
 if $R\geq 1$
\begin{equation}
			\label{q-1}
|\partial_{t}^{n}D_{x}^{m}q(t,x)|\leq N t^{\frac{-\alpha(d+m)}{2}-n+\alpha-1}\exp\{-\sigma t^{-\frac{\alpha}{2-\alpha}}|x|^{\frac{2}{2-\alpha}}\},
\end{equation}
and if $R\leq 1$
\begin{align}
                    \notag
|\partial_{t}^{n} D_{x}^{m}q(t,x)| &\leq N|x|^{-d-m}t^{-n+\alpha-1}(R^2+R^2 \ln R \cdot 1_{d=2})\\
                     \label{q-3}
& \quad + N|x|^{-d}t^{-n+\alpha-1}\left( R^{1/2} \cdot 1_{d=1}+ R \cdot 1_{d=2}+R^2 \ln R \cdot 1_{d=4} \right) 1_{m = 0}.
\end{align}

(ii) For any $t\neq 0$ and $x\neq 0$,
\begin{equation}
                   \label{p equal}
\partial_{t}^{\alpha}p=\Delta p,
\quad \quad
              \frac{\partial p}{\partial t}=\Delta q.
\end{equation}
\end{lem}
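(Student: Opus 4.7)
The plan for part (i) is to reduce to the unit time $t=1$ by exploiting the scaling of $p$, and then bound $\partial_t^n D_x^m q(1,x)$ directly from the integral representation of $q$ in terms of $p$. From $\cF(p(t,\cdot))(\xi) = E_\alpha(-|t^{\alpha/2}\xi|^2)$ one obtains $p(t,x) = t^{-\alpha d/2}p(1,t^{-\alpha/2}x)$, and for $q=I^{\alpha-1}p$ or $q=D_t^{1-\alpha}p$ an analogous identity $q(t,x) = t^{\alpha-1-\alpha d/2}q(1,t^{-\alpha/2}x)$ holds. Differentiating these scaling relations produces exactly the prefactor $t^{-\alpha(d+m)/2-n+\alpha-1}$ appearing in \eqref{q-1}--\eqref{q-3}, so it is enough to establish the corresponding bounds at $t=1$ as functions of $|x|$.

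At $t=1$, for $\alpha\in(1,2)$ I would write $q(1,x)=\Gamma(\alpha-1)^{-1}\int_0^1(1-s)^{\alpha-2}p(s,x)\,ds$, pull $\partial_t^n D_x^m$ inside the integral after invoking the scaling relation for $p$, and split the range at $s=1/2$. When $|x|\geq 1$, the exponential bound \eqref{p-1} on $p(s,\cdot)$ dominates both pieces (the exponent is uniform in $s$ on $[1/2,1]$ and only improves on $[0,1/2]$), yielding \eqref{q-1}. When $|x|\leq 1$, insert \eqref{p-3} and integrate against $(1-s)^{\alpha-2}$; the various terms in \eqref{q-3}, including the borderline dimension-dependent logarithms and the $m=0$-only contributions, emerge by tracking integrals of the form $\int_0^1(1-s)^{\alpha-2}|x|^{-d-m}(s^{-\alpha}|x|^2)^{\gamma}\,ds$ as $s^{\alpha}$ crosses the threshold $|x|^2$. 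For $\alpha\in(0,1)$ one writes $q=\frac{d}{dt}I^\alpha p$ (differentiation in $t$ is legitimate since $p(0,x)=0$ for $x\neq 0$ by \eqref{p-1}) and proceeds analogously. The principal obstacle in part (i) is the careful bookkeeping of the borderline dimension-dependent logarithmic terms near $R\sim 1$, especially the isolated $R^2\ln R\cdot 1_{d=4}$ term, which arises from a critical integration.

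For part (ii) I would work on the Fourier side using the power series definition of $E_\alpha$. Termwise application of $\partial^{\alpha}_t t^\beta = \Gamma(\beta+1)\Gamma(\beta+1-\alpha)^{-1}t^{\beta-\alpha}$ to $E_\alpha(-t^\alpha|\xi|^2)=\sum_k(-t^\alpha|\xi|^2)^k/\Gamma(\alpha k+1)$ yields $\partial^{\alpha}_t E_\alpha(-t^\alpha|\xi|^2) = -|\xi|^2 E_\alpha(-t^\alpha|\xi|^2)$, and hence $\cF(\partial^{\alpha}_t p) = \cF(\Delta p)$, giving $\partial^{\alpha}_t p=\Delta p$ pointwise for $x\neq 0$. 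For the second identity, the termwise derivative $\frac{d}{dt}E_\alpha(-t^\alpha|\xi|^2)= -|\xi|^2 t^{\alpha-1}\sum_k(-t^\alpha|\xi|^2)^k/\Gamma(\alpha k+\alpha)$ must be matched with $-|\xi|^2\cF(q(t,\cdot))(\xi)$; applying $I^{\alpha-1}$ (if $\alpha\in(1,2)$) or $D_t^{1-\alpha}$ (if $\alpha\in(0,1)$) termwise to the series of $E_\alpha(-t^\alpha|\xi|^2)$ produces exactly $t^{\alpha-1}\sum_k(-t^\alpha|\xi|^2)^k/\Gamma(\alpha k+\alpha)$, so this series equals $\cF(q(t,\cdot))(\xi)$. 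Combining the two computations gives $\cF(\partial_t p)(\xi) = \cF(\Delta q)(\xi)$, and the pointwise identity $\frac{\partial p}{\partial t}=\Delta q$ for $x\neq 0$ follows by Fourier inversion, which is justified distributionally in dimensions $d\geq 2$ where the Fourier transforms decay only like $O(|\xi|^{-2})$, using the smoothness provided by part (i) and \eqref{p-1}--\eqref{p-3}.
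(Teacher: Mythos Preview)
Your scaling reduction for part (i) is fine, and for $R\geq 1$ the exponential decay of $p$ from \eqref{p-1} does survive the fractional integration and yields \eqref{q-1}. The gap is in the small-$R$ regime. You propose to insert the pointwise bound \eqref{p-3} for $p$ into the integral $q=I^{\alpha-1}p$ (or its analogue for $\alpha<1$) and read off \eqref{q-3}. This cannot work: for $m\geq 1$ the bound \eqref{p-3} gives $|D_x^m p(s,x)|\lesssim |x|^{-d-m}s^{-\alpha}|x|^2$ on $\{R_s\leq 1\}$, and integrating this against $(1-s)^{\alpha-2}$ on $[|x|^{2/\alpha},1]$ produces only $|x|^{-d-m+2/\alpha}$ (the integral $\int_{|x|^{2/\alpha}}^{1/2}s^{-\alpha}\,ds$ diverges like $|x|^{2(1-\alpha)/\alpha}$ when $\alpha>1$). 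The target \eqref{q-3} demands $|x|^{-d-m}R^2=|x|^{-d-m+4}$, which is two full powers better; in particular your bound fails to give $D_x^2 q(t,\cdot)\in L_1(\fR^d)$ for $\alpha\in[1,2)$, a fact the paper uses downstream (Corollary \ref{integ cor-1} and \eqref{eqn 9.19.10}). The case $\alpha\in(0,1)$ is no better, since there you must differentiate $I^\alpha p$ in $t$ first.

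The extra factor of $R$ is not an artifact of estimation but a genuine cancellation: in the paper's Mellin--Barnes representation, $p$ carries $\Gamma(1+\alpha z)^{-1}$ in the integrand while $q$ carries $\Gamma(\alpha+\alpha z)^{-1}$, and the latter has a zero at $z=-1$ that kills the residue responsible for the $R^1$ term in the $p$-expansion. The leading pole for $q$ (when $m\geq 1$ or $d\geq 3$) is therefore at $z=-2$, giving $R^2$. Your absolute-value integration discards exactly this cancellation. The paper accordingly does not attempt to bound $q$ through $p$: it derives a Fox H-function formula for $q$ itself (equation \eqref{appendix:q}) and computes the residue expansion directly. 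The dimension-dependent logarithms you mention (the $1_{d=2}$ and $1_{d=4}$ terms) arise from double poles in that expansion, not from any threshold integral in $s$.

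For part (ii), your Fourier-side identities are formally correct, but the final step ``Fourier inversion'' is not straightforward: $\Delta p(t,\cdot)$ and $\Delta q(t,\cdot)$ are not in $L_1(\fR^d)$ (the bounds \eqref{p-3}, \eqref{q-3} give non-integrable singularities at $x=0$), so $\cF(\Delta p)=\cF(\partial_t^\alpha p)$ in $L_1$-sense is not available. The paper circumvents this by testing against $h(t)\phi(x)$ with $\phi\in C_c^\infty(\fR^d\setminus\{0\})$, applying Parseval to $p(t,\cdot)\in L_1$, and integrating by parts in $t$; for $\partial_t p=\Delta q$ it commutes $\Delta$ through $I^{\alpha-1}$ (respectively $D_t^{1-\alpha}$) pointwise for $x\neq 0$ and invokes the already-proved $\partial_t^\alpha p=\Delta p$. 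Your approach could be made rigorous along similar lines, but the justification you sketch (``distributionally in dimensions $d\geq 2$'') is not a proof.
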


 \vspace{3mm}

One can find  similar statements of Lemmas \ref{p exists} and  \ref{prop:kernel estimate} in
\cite{EIK, eidelman2004cauchy, kochubei2014asymptotic, pskhu2009fundamental}. For the sake of completeness, we give an independent and rigorous proof in Section \ref{p q kernel sec}.

\begin{corollary}
                        \label{integ cor-1}
Let $0<\varepsilon<T$. Then
$$
\int_{\mathbf{R}^{d}}\sup_{t\in [\varepsilon,T]}|D_{x}^{m}q(t,x)|dx<\infty,\quad \quad m=0,1,2.
$$
and
$$
\int_{\mathbf{R}^{d}}\sup_{t\in [\varepsilon,T]} |D^m_tp(t,x)| dx<\infty,  \quad \quad  m=0,1.
$$
\end{corollary}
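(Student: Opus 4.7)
The plan is to split $\fR^d$ based on the scaling $R := t^{-\alpha}|x|^2$ that organizes the estimates in Lemmas \ref{p exists} and \ref{prop:kernel estimate}, and then handle the two resulting regions separately.

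\textbf{Far field.} On $A_{\mathrm{out}} := \{|x| > T^{\alpha/2}\}$ we have $R \geq T^{-\alpha}|x|^2 \geq 1$ for every $t \in [\varepsilon, T]$, so \eqref{q-1} (respectively \eqref{p-1}) applies uniformly. The $t$-prefactors $t^{-\alpha(d+m)/2 + \alpha - 1}$ and $t^{-\alpha d/2 - n}$ are bounded on $[\varepsilon, T]$, while the exponential $\exp\{-\sigma t^{-\alpha/(2-\alpha)}|x|^{2/(2-\alpha)}\}$ is monotone increasing in $t$ and hence bounded by its value at $t = T$. This yields a uniform bound of the form
$$
\sup_{t\in[\varepsilon,T]} |D_x^m q(t,x)| + \sup_{t\in[\varepsilon,T]} |D_t^n p(t,x)| \leq C\exp\{-\sigma T^{-\alpha/(2-\alpha)}|x|^{2/(2-\alpha)}\}
$$
on $A_{\mathrm{out}}$, which is plainly integrable.

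\textbf{Near field.} On $A_{\mathrm{in}} := \{|x| \leq T^{\alpha/2}\}$, which is bounded, I fix $x \neq 0$ and split $[\varepsilon,T]$ into $\{t : R \leq 1\}$ (use \eqref{q-3}/\eqref{p-3}) and $\{t : R \geq 1\}$ (use \eqref{q-1}/\eqref{p-1}). On the latter set the $t$-prefactors are bounded and the exponential is at most $1$, so the contribution is uniformly bounded over the bounded set $A_{\mathrm{in}}$. For the former, substituting $R = t^{-\alpha}|x|^2$ into \eqref{q-3} turns the leading term into
$$
|x|^{-d-m} t^{\alpha-1} R^2 \;=\; |x|^{4-d-m}\, t^{-\alpha-1},
$$
whose $t$-factor is bounded on $[\varepsilon, T]$ and whose $x$-factor is locally integrable near $0$ for $m \leq 2$, since $4 - d - m > -d$. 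The remaining terms, present only in the exceptional dimensions $d = 1, 2, 4$, simplify after the same substitution to quantities bounded by $C + C|\ln|x||$ uniformly in $t \in [\varepsilon,T]$, hence locally integrable on $A_{\mathrm{in}}$. The argument for $p$ via \eqref{p-3} is parallel: the leading term $|x|^{-d} t^{-n} R = |x|^{2-d} t^{-n-\alpha}$ is locally integrable for every $d \geq 1$, and the corrections in dimensions $d = 1, 2$ give at most a bounded or logarithmically singular factor.

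\textbf{Main obstacle.} The principal difficulty is pure bookkeeping: tracking the special dimensional cases $d \in \{1,2,4\}$ where the near-field estimates carry additional terms, and verifying that every one of those terms, after the substitution $R = t^{-\alpha}|x|^2$, yields an expression integrable on $A_{\mathrm{in}}$ uniformly in $t \in [\varepsilon, T]$. Once the substitution is carried out, every power of $|x|$ that appears has exponent exceeding $-d$, giving local integrability at the origin, and the powers of $t$ are harmless since $t$ is bounded away from $0$.
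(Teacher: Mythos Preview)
Your proof is correct and follows essentially the same strategy as the paper: split according to whether $R = t^{-\alpha}|x|^2$ is large or small, use the exponential bound \eqref{q-1}/\eqref{p-1} in the former case and the power-law bounds \eqref{q-3}/\eqref{p-3} in the latter, noting that all $t$-prefactors are harmless on $[\varepsilon,T]$. The paper's version is terser---it just says ``for large $|x|$'' and ``for small $|x|$'' and records the resulting bounds---whereas you are more explicit about the threshold $|x|=T^{\alpha/2}$ and about the fact that on the inner region both regimes for $R$ can occur as $t$ varies; this extra care is fine but not strictly needed, since the intermediate annulus is compact and the sup is bounded there anyway.
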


\begin{proof}
By \eqref{q-1} and \eqref{q-3}, it follows
that for large $|x|$
 $$
 \sup_{t\in [\varepsilon,T]}|D^m_x q(t,x)|\leq
Ne^{- c |x|^{\frac{2}{2-\alpha}}}
 $$
and for small $|x|$
 $$
 \sup_{t\in [\varepsilon,T]}|D^m_x q(t,x)|\leq
N|x|^{-d-m}\left(|x|^3 +|x|\cdot 1_{m=0}\right),
 $$
where $N$ and $c$ depend only on $d$, $\alpha$, $T$, and $\varepsilon$.
This certainly proves the  assertion related to $q$, and $p$ is handled similarly.
The corollary is proved.
\end{proof}

\begin{lem}
            \label{lem 17.1}
Let $\lambda >0$, $\alpha\in (0,2)$, and $\phi$ be a continuous function on $[0,\infty)$ so that
the Laplace transforms of $\phi$ and $\partial^{\alpha}_t\phi$ exist,
\begin{equation}
             \label{9.16.4}
\partial^{\alpha}_t \phi +\lambda \phi=f(t), \quad t>0,
\end{equation}
and $\phi(0)=0$ $($additionally $\phi'(0)=0$ if $\alpha \in (1,2))$.
Moreover we assume for each $s>0$,
$$
e^{-st}\int_0^t\partial^{\alpha}_t \phi(r)dr \to 0 \quad \text{as} \quad t \to \infty.
$$
Then
\begin{equation}
           \label{eqn 9.19.1}
\phi(t)=\int^t_0 H_{\alpha,\lambda}(t-s)f(s)ds,
\end{equation}
where $H_{\alpha,\lambda}(t)=D^{1-\alpha}_t E_{\alpha}(-\lambda t^{\alpha})$ if $\alpha\in (0,1)$ and $H_{\alpha,\lambda}(t)=I^{\alpha-1}E_{\alpha}(-\lambda t^{\alpha})$ otherwise.
  \end{lem}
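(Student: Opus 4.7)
The plan is to attack this by passing to the Laplace transform. Writing $\tilde{g}(s) := \int_0^\infty e^{-st} g(t)\,dt$, the strategy is to transform equation \eqref{9.16.4}, identify $1/(s^\alpha+\lambda)$ as the Laplace transform of $H_{\alpha,\lambda}$, and then invert via the convolution theorem to recover \eqref{eqn 9.19.1}.

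First I would establish the transformation rule
\[
\widetilde{\partial^{\alpha}_t \phi}(s) = s^\alpha \tilde{\phi}(s)
\]
for our $\phi$. For $\alpha\in(0,1)$ this follows by writing $\partial^\alpha_t \phi(t) = \tfrac{d}{dt} I^{1-\alpha}\phi(t)$ (using $\phi(0)=0$ and \eqref{caputo}), integrating by parts in the Laplace integral, and invoking the hypothesis $e^{-st}\int_0^t \partial^\alpha_t \phi(r)dr \to 0$ to kill the boundary term; the factor $s$ combines with $\widetilde{I^{1-\alpha}\phi} = s^{-(1-\alpha)}\tilde\phi$ to give $s^\alpha \tilde\phi$. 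For $\alpha\in(1,2)$, applying $I$ once to \eqref{eqn defn caupto} and using $\phi(0)=\phi'(0)=0$ reduces the problem to the case just handled; the decay assumption again disposes of the boundary contribution. Transforming \eqref{9.16.4} then yields
\[
(s^\alpha+\lambda)\tilde\phi(s) = \tilde f(s),\qquad \text{so}\qquad \tilde\phi(s) = \frac{1}{s^\alpha+\lambda}\,\tilde f(s).
\]

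Next I would compute $\widetilde{E_\alpha(-\lambda t^\alpha)}$ by termwise Laplace inversion of the defining series, using $\widetilde{t^{\alpha k}}(s) = \Gamma(\alpha k+1)s^{-\alpha k-1}$ and summing a geometric series to obtain
\[
\widetilde{E_\alpha(-\lambda \cdot^\alpha)}(s) = \frac{s^{\alpha-1}}{s^\alpha+\lambda},\qquad s^\alpha>\lambda.
\]
Then for $\alpha\in(1,2)$, $H_{\alpha,\lambda} = I^{\alpha-1}E_\alpha(-\lambda t^\alpha)$ and $\widetilde{I^{\alpha-1}g}=s^{-(\alpha-1)}\tilde g$ give $\widetilde{H_{\alpha,\lambda}}(s) = 1/(s^\alpha+\lambda)$. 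For $\alpha\in(0,1)$, $H_{\alpha,\lambda} = D_t^{1-\alpha}E_\alpha(-\lambda t^\alpha) = \tfrac{d}{dt}I^\alpha E_\alpha(-\lambda\cdot^\alpha)$; since $E_\alpha(0)=1$ but the extra $I^\alpha$ factor and the smoothness of $E_\alpha$ make $I^\alpha E_\alpha(-\lambda\cdot^\alpha)(0)=0$, the usual rule gives $\widetilde{H_{\alpha,\lambda}}(s) = s\cdot s^{-\alpha}\cdot \tfrac{s^{\alpha-1}}{s^\alpha+\lambda} = 1/(s^\alpha+\lambda)$. In either case,
\[
\widetilde{\phi}(s) = \widetilde{H_{\alpha,\lambda}}(s)\,\tilde f(s) = \widetilde{H_{\alpha,\lambda}*f}(s).
\]
Since $\phi$ is assumed continuous and the convolution on the right is continuous in $t$ (from the local integrability of $H_{\alpha,\lambda}$ near $0$, visible from the series for $E_\alpha$ and the action of $I^{\alpha-1}$ or $D_t^{1-\alpha}$), uniqueness of the Laplace transform yields \eqref{eqn 9.19.1}.

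The main obstacle I anticipate is the rigorous justification of the Caputo–Laplace rule $\widetilde{\partial^\alpha_t\phi}=s^\alpha\tilde\phi$: the identity \eqref{eqn defn caupto} involves a derivative of a convolution, so one must integrate by parts carefully and verify that the boundary terms vanish, which is precisely why the technical hypothesis $e^{-st}\int_0^t\partial^\alpha_t\phi(r)\,dr\to 0$ appears in the statement. Once this is in place, everything else is a bookkeeping exercise involving the series expansion of $E_\alpha$ and the standard transform rules for $I^\beta$ and $D_t^\beta$.
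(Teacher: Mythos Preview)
Your proposal is correct and follows essentially the same route as the paper: take Laplace transforms of \eqref{9.16.4}, identify $\widetilde{H_{\alpha,\lambda}}(s)=1/(s^\alpha+\lambda)$, and conclude via the convolution theorem and uniqueness of the Laplace transform. The only cosmetic difference is that the paper computes $\widetilde{E_\alpha(-\lambda\cdot^\alpha)}$ by first noting that $\varphi(t)=E_\alpha(-t^\alpha)$ solves $\partial^\alpha_t\varphi=-\varphi$ with $\varphi(0)=1$ and then applying the transform rules, whereas you sum the series termwise; both arrive at $s^{\alpha-1}/(s^\alpha+\lambda)$.
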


\begin{proof}
First, recall that $\varphi(t):=E_{\alpha}(-t^{\alpha})$ satisfies $\varphi(0)=1$ and $\partial^{\alpha}_t \varphi=-\varphi$.
Let $\cL$ denote the Laplace transform. Then from $\mathcal{L}[\partial^{\alpha}_t\varphi]=-\mathcal{L}[\varphi]$ we  get
$$
\mathcal{L}[\varphi](s):=\int^{\infty}_0 e^{-st}\varphi(t)dt=\frac{s^{\alpha-1}}{s^{\alpha}+1}.
$$
Here, we used the following facts: for $\beta\in (0,1)$,
\begin{equation}
              \label{eqn 9.22.1}
 \mathcal{L}[h']=s\mathcal{L}[h](s), \quad  \mathcal{L}[h*g](s)=\mathcal{L}[h](s) \cdot \mathcal{L}[g](s),  \quad \mathcal{L}[k_{1-\beta}](s)=s^{\beta-1}.
\end{equation}
It  follows that
\begin{equation}
        \label{16.5}
\mathcal{L}[E_{\alpha}(-\lambda t^{\alpha})](s)=\frac{s^{\alpha-1}}{s^{\alpha}+\lambda}, \quad
\mathcal{L}[H_{\alpha,\lambda}](s)=\frac{1}{s^{\alpha}+\lambda}.
\end{equation}

On the other hand, taking the Laplace transform to (\ref{9.16.4}) and using \eqref{eqn 9.22.1} we   get
$$
\mathcal{L}[\phi](s)=\frac{1}{s^{\alpha}+\lambda} \cdot \mathcal{L}[f](s), \quad s>0.
$$
This and (\ref{16.5}) certainly prove the lemma, because to prove equality \eqref{eqn 9.19.1} it is enough to show that two functions under consideration have the same Laplace transform.
\end{proof}
\begin{lem}
                  \label{lem 9.21.1}
(i) Let $u\in C_c^\infty(\fR^{d+1}_+)$ and denote $f:=\partial^{\alpha}_t u-\Delta u$. Then
\begin{equation}
                   \label{eqn 09.16.1}
                   u(t,x)=\int^t_0 \int_{\fR^d} q(t-s,x-y)f(s,y)dyds.
                   \end{equation}
 (ii) Let $f\in C_c^\infty(\fR^{d+1}_+)$ and define $u$ as in \eqref{eqn 09.16.1}. Then $u$ satisfies $\partial^{\alpha}_tu=\Delta u+f$.
  \end{lem}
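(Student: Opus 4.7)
The plan is to reduce both parts to a family of scalar fractional ODEs parameterized by $\xi\in\fR^d$ via the spatial Fourier transform, and then apply Lemma \ref{lem 17.1}. This connects the representation kernel $q(t,x)$ to the Green's function $H_{\alpha,\lambda}(t)$ of the operator $\partial_t^\alpha+\lambda$.

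For (i), since $u\in C_c^\infty(\fR^{d+1}_+)$, the spatial Fourier transform $\hat u(t,\xi):=\cF(u(t,\cdot))(\xi)$ is smooth in $t$ and Schwartz in $\xi$, compactly supported in some interval $[0,T_0]$ in $t$, with $\hat u(0,\xi)=0$ (and $\partial_t\hat u(0,\xi)=0$ when $\alpha>1$). Applying $\cF$ to the identity $f=\partial_t^\alpha u-\Delta u$ gives, for each fixed $\xi$,
\begin{equation*}
\partial_t^\alpha \hat u(t,\xi)+|\xi|^2\hat u(t,\xi)=\hat f(t,\xi),\qquad t>0.
\end{equation*}
The Laplace hypotheses of Lemma \ref{lem 17.1} with $\lambda=|\xi|^2$ and $\phi=\hat u(\cdot,\xi)$ are easily checked: since $u$ vanishes for $t>T_0$, the Caputo formula shows $|\partial_t^\alpha \hat u(t,\xi)|\lesssim t^{k-\alpha-1}$ for large $t$, so both $\hat u(\cdot,\xi)$ and $\partial_t^\alpha\hat u(\cdot,\xi)$ have Laplace transforms and $e^{-st}\int_0^t \partial_t^\alpha \hat u(r,\xi)\,dr\to 0$ as $t\to\infty$. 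Lemma \ref{lem 17.1} then yields $\hat u(t,\xi)=\int_0^t H_{\alpha,|\xi|^2}(t-s)\hat f(s,\xi)\,ds$.

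The next step is to identify $H_{\alpha,|\xi|^2}(t)=\cF(q(t,\cdot))(\xi)$. For $\alpha\in(1,2)$, using $\cF(p(t,\cdot))(\xi)=E_\alpha(-t^\alpha|\xi|^2)$ from Lemma \ref{p exists} together with Fubini, one gets $\cF(q(t,\cdot))(\xi)=I^{\alpha-1}E_\alpha(-\cdot^\alpha|\xi|^2)(t)=H_{\alpha,|\xi|^2}(t)$, and the case $\alpha\in(0,1)$ is analogous with $D_t^{1-\alpha}$ in place of $I^{\alpha-1}$ (justified by Corollary \ref{integ cor-1} and the pointwise decay estimates of $p$). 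The convolution theorem then gives
\begin{equation*}
u(t,x)=\int_0^t\!\!\int_{\fR^d} q(t-s,x-y)f(s,y)\,dy\,ds,
\end{equation*}
upon inverting $\cF$ in $x$, which is part (i).

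For (ii), one runs the same argument in reverse: given $f\in C_c^\infty(\fR^{d+1}_+)$, define $u$ by the right-hand side above and take $\cF$ in $x$. The identification $\cF(q(t,\cdot))(\xi)=H_{\alpha,|\xi|^2}(t)$ still holds, so $\hat u(t,\xi)=\int_0^t H_{\alpha,|\xi|^2}(t-s)\hat f(s,\xi)\,ds$, and computing $\partial_t^\alpha \hat u(t,\xi)$ via the Laplace-transform identity $\mathcal{L}[H_{\alpha,|\xi|^2}](s)=1/(s^\alpha+|\xi|^2)$ (equivalent to the Duhamel/convolution structure) yields $\partial_t^\alpha\hat u+|\xi|^2\hat u=\hat f$. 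Inverting $\cF$ in $x$ gives $\partial_t^\alpha u=\Delta u+f$. The main obstacle is the technical bookkeeping in justifying the interchange of $\cF$ with $\partial_t^\alpha$, $I^{\alpha-1}$, and $D_t^{1-\alpha}$ and differentiating under the integral: this is exactly where the sharp pointwise bounds on $p$ and $q$ from Lemmas \ref{p exists} and \ref{prop:kernel estimate} (and the integrability in Corollary \ref{integ cor-1}) are needed, together with Fubini and the smoothness and compact support of $u$ and $f$.
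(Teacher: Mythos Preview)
Your proposal is correct and follows essentially the same route as the paper: pass to the spatial Fourier side, apply Lemma \ref{lem 17.1} to the scalar ODE $\partial_t^\alpha\hat u+|\xi|^2\hat u=\hat f$, and identify $\hat q(t,\xi)=H_{\alpha,|\xi|^2}(t)$ via Corollary \ref{integ cor-1} and Fubini. The paper carries out the identification $\hat q=H_{\alpha,|\xi|^2}$ explicitly for $\alpha\in(0,1)$ and skips $\alpha\in[1,2)$ as easier, while you do the reverse, but the content is the same.
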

  \begin{proof}
 (i) Since  $q$ is integrable on $(0,T)\times \fR^d$ (see Lemma \ref{prop:kernel estimate}), it is enough to prove
  \begin{equation}
             \label{eqn 16.2}
  \hat{u}(t,\xi)=\int^t_0 \hat{q}(t-s,\xi)\hat{f}(s,\xi)ds,
  \end{equation}
where $\hat{f}$ denotes the Fourier transform of $f$ with respect to $x$, i.e.
$$
\hat f(s,\xi):= \cF(f(s,\cdot))(\xi)= \frac{1}{(2\pi)^{d/2}} \int_{\fR^{d}} e^{-i x \cdot \xi} f(s,x) dx.
$$
First note that from the definition of $f$ we get
$$
\hat{f}(t,\xi)=\partial^{\alpha}_t \hat{u} (t,\xi)+|\xi|^2 \hat{u}(t,\xi), \quad \forall t>0.
$$
Therefore by Lemma \ref{lem 17.1}
$$
\hat{u}(t,\xi)=\int^t_0 H_{\alpha,|\xi|^2}(t-s)\hat{f}(s,\xi)ds.
$$
Hence it is enough to prove
\begin{equation}
         \label{10.13.1}
\hat{q}(t,\xi)=H_{\alpha,|\xi|^2}(t).
\end{equation}
Denote $c_d=(2\pi)^{-d/2}$. If $\alpha \in (0,1)$ then by the definition
$$
\hat{q}(t,x)=c_d \int_{\fR^d}e^{-i x\cdot \xi} q(t,x) dx=c_d\int_{\fR^d}e^{-i x\cdot \xi} \left[\frac{d}{dt}\int^t_0k_{\alpha}(t-s)p(s,x)ds \right] dx.
$$
Since $q(t,\cdot)$ is integrable in $\fR^d$ uniformly in a neighborhood of $t>0$ (see Corollary \ref{integ cor-1}), one can take the derivative $\frac{d}{dt}$ out of the integral.
After this using Fubini's theorem we get
$$
\hat{q}=\frac{d}{dt}\int^t_0k_{\alpha}(t-s)\left[c_d\int_{\fR^d}e^{-i x\cdot \xi} p(s,x)dx\right]ds=D^{1-\alpha}_tE_{\alpha}(-t^{\alpha}|\xi|^2).
$$
Hence \eqref{10.13.1} and (i) are proved. The case $\alpha\in [1,2)$ is easier and we skip the proof.

\vspace{3mm}

(ii) Taking the Fourier transform to \eqref{eqn 09.16.1} we get
$$
\hat{u}(t,\xi)=\int^t_0 H_{\alpha,|\xi|^2}(t-s) \hat{f}(s,\xi)ds.
$$
Note that if one defines $\phi$ as in \eqref{eqn 9.19.1} then it satisfies \eqref{9.16.4}. Consequently, $\hat{u}$ satisfies
$$
\partial^{\alpha}_t\hat{u}(t,\xi)+|\xi|^2\hat{u}(t,\xi)=\hat{f}(t,\xi),
$$
and this certainly proves the equality  $\partial^{\alpha}_tu=\Delta u+f$,
 because $\hat{f}(t,\cdot)=0$ if $t$ is small enough and thus for each $t>0$,
$$
\cF\left(\partial^{\alpha}_t u(t, \cdot)\right)(x) =\int^t_0 H_{\alpha,|\xi|^2}(t-s) \partial^\alpha_s\hat{f}(s,\xi)ds
=\partial^{\alpha}_t\hat{u}(t, \xi).
$$
The lemma is proved.
\end{proof}

 Now we define the operator $\mathcal{G}$ by
$$
\mathcal{G}f(t,x)=\int_{-\infty}^{t}\int_{\mathbf{R}^{d}}q(t-s,x-y)\Delta f(s,y)dyds.
$$
Since $q$ is integrable on $(0,T)\times \fR^d$, $\mathcal{G} f$ is well-defined if $f\in C_c^\infty(\mathbf{R}^{d+1})$. Also recall that $D_xq(t,\cdot)$ and $D^2_x q(t,\cdot)$ are integrable in $\fR^d$ for each $t>0$, and therefore it follows that
\begin{equation}
         \label{eqn 9.19.10}
\mathcal{G}f(t,x)=\lim_{\varepsilon \to 0}\int_{-\infty}^{t-\varepsilon}\left[\int_{\mathbf{R}^{d}}\Delta q(t-s,x-y) f(s,y)dy\right]ds.
\end{equation}

\begin{lem}
				\label{lem:L2}
Let $f\in C_c^\infty(\mathbf{R}^{d+1})$. Then
\begin{equation}
                  \label{9.19.3}
\left\Vert \mathcal{G}f\right\Vert _{L_{2}(\mathbf{R}^{d+1})}\leq N\|f\|_{L_{2}(\mathbf{R}^{d+1})}
\end{equation}
where $N=N(\alpha,d)$.
\end{lem}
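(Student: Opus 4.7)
My plan is to reduce \eqref{9.19.3} to an $L^{\infty}$-bound on a Fourier multiplier symbol, via two applications of Plancherel's theorem (one in $x$, one in $t$). After translating in time we may assume $f\in C_c^{\infty}(\fR^{d+1}_{+})$; Lemma \ref{lem 9.21.1}(ii) applied with $\Delta f$ in place of $f$ then shows that $u:=\cG f$ satisfies
$$
\partial^{\alpha}_{t}u=\Delta u+\Delta f\quad\text{on }(0,\infty)\times\fR^{d},\qquad u(t,\cdot)\equiv 0\text{ for }t\le 0,
$$
so in particular all relevant initial data of $u$ at $t=0$ vanish. Taking the spatial Fourier transform converts this into a fractional ODE in $t$ for each fixed $\xi$:
$$
\partial^{\alpha}_{t}\widehat u(t,\xi)+|\xi|^{2}\widehat u(t,\xi)=-|\xi|^{2}\widehat f(t,\xi),\qquad t>0.
$$

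Extending $\widehat u(\cdot,\xi)$ by zero to $t<0$ and applying the temporal Fourier transform, I use $\cF_{t}[\partial^{\alpha}_{t}v](\tau)=(i\tau)^{\alpha}\widetilde v(\tau)$, valid for such $v$ with vanishing initial data; this follows from $\cF_{t}[k_{k-\alpha}\mathbf{1}_{+}](\tau)=(i\tau)^{\alpha-k}$ with $k=\lceil\alpha\rceil$, exactly as in the Laplace-transform computations in the proof of Lemma \ref{lem 17.1}. The result is
$$
\widetilde{\widehat u}(\tau,\xi)=-\frac{|\xi|^{2}}{(i\tau)^{\alpha}+|\xi|^{2}}\,\widetilde{\widehat f}(\tau,\xi),
$$
so everything reduces to a uniform bound on the symbol $m(\tau,\xi):=|\xi|^{2}/((i\tau)^{\alpha}+|\xi|^{2})$.

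Using the principal branch $(i\tau)^{\alpha}=|\tau|^{\alpha}e^{i(\alpha\pi/2)\,\mathrm{sgn}(\tau)}$ and setting $\theta=\alpha\pi/2\in(0,\pi)$, $\lambda=|\xi|^{2}$, $\mu=|\tau|^{\alpha}$,
$$
|(i\tau)^{\alpha}+\lambda|^{2}=\lambda^{2}+2\lambda\mu\cos\theta+\mu^{2}=\lambda^{2}\bigl(1+2y\cos\theta+y^{2}\bigr),\qquad y:=\mu/\lambda\ge 0.
$$
The quadratic $1+2y\cos\theta+y^{2}$ attains its minimum over $y\ge 0$ at $y=0$ (value $1$) when $\cos\theta\ge 0$, i.e.\ $\alpha\le 1$, and at $y=-\cos\theta$ (value $\sin^{2}\theta$) when $\cos\theta<0$, i.e.\ $\alpha\in(1,2)$. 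Since $\sin(\alpha\pi/2)>0$ throughout $\alpha\in(0,2)$, in either case $|m(\tau,\xi)|\le 1/\sin(\alpha\pi/2)$ uniformly in $(\tau,\xi)$. Two applications of Parseval then give
$$
\|\cG f\|_{L_{2}(\fR^{d+1})}^{2}\le \frac{1}{\sin^{2}(\alpha\pi/2)}\,\|f\|_{L_{2}(\fR^{d+1})}^{2},
$$
which is \eqref{9.19.3}.

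The only delicate point is the time Fourier step: for $\alpha\in(1,2)$ the kernel $H_{\alpha,|\xi|^{2}}$ is not in $L_{1}(\fR_{+})$ and $\widehat u(\cdot,\xi)$ is not a priori tempered. I would handle this either by a standard $e^{-\varepsilon t}$ regularization ($\varepsilon\to 0^{+}$), which promotes the Laplace-transform identity $\cL[H_{\alpha,\lambda}](s)=1/(s^{\alpha}+\lambda)$ recorded in \eqref{16.5} to the Fourier-multiplier identity above, or by exploiting the explicit Duhamel representation $\widehat u(t,\xi)=-|\xi|^{2}\int_{0}^{t}H_{\alpha,|\xi|^{2}}(t-s)\widehat f(s,\xi)\,ds$ together with the kernel estimates of Lemma \ref{prop:kernel estimate} and the Mittag-Leffler asymptotic \eqref{mittag} to verify the required integrability directly.
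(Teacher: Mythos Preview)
Your approach is correct and genuinely different from the paper's. The paper truncates the kernel, setting $q_{M}=q\,1_{0<t<M}$, takes the full $(d{+}1)$-dimensional Fourier transform of $q_{M}*\Delta f$, and then shows the resulting symbol
\[
I_{M}(\tau,\xi)=\int_{0}^{M}e^{-i\tau t}\,\cF_{d}(\partial_{t}p)(t,\xi)\,dt
\]
is bounded uniformly in $M$ by a contour-rotation argument exploiting the sectorial decay of $E_{\alpha}$; the conclusion follows via Fatou. You instead identify the multiplier in closed form as $-|\xi|^{2}/((i\tau)^{\alpha}+|\xi|^{2})$ and bound it by elementary algebra, obtaining the explicit constant $N=1/\sin(\alpha\pi/2)$. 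Your route is shorter and more transparent; the paper's route avoids having to justify that the Fourier transform of $H_{\alpha,\lambda}$ is the boundary value of its Laplace transform.

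One remark: your caution about $\alpha\in(1,2)$ is unnecessary. From $H_{\alpha,\lambda}(t)=t^{\alpha-1}E_{\alpha,\alpha}(-\lambda t^{\alpha})$ and the asymptotic $E_{\alpha,\alpha}(-x)=-\Gamma(-\alpha)^{-1}x^{-2}+O(x^{-3})$ (the $x^{-1}$ term vanishes since $1/\Gamma(0)=0$), one gets $|H_{\alpha,\lambda}(t)|\lesssim t^{-\alpha-1}$ at infinity for all $\alpha\in(0,2)$, so $H_{\alpha,\lambda}\in L_{1}(\fR_{+})$ in every case. Then Young's inequality gives $\hat u(\cdot,\xi)\in L_{2}(\fR)$, the convolution theorem applies directly, and $\cF_{t}[H_{\alpha,\lambda}](\tau)=1/((i\tau)^{\alpha}+\lambda)$ is obtained by passing to the boundary in \eqref{16.5} via dominated convergence. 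No $e^{-\varepsilon t}$ regularization is needed.
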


\begin{proof}

Denote  $q_M=q 1_{0<t<M}$. One can easily check that $q_M$ is integrable in $\fR^{d+1}$. Denote $\cG_M f=q_M*\Delta f$. Then  by Parseval's identity
$$
\|\cG_M f\|^2_{L_2}=\int_{\mathbf{R}^{d+1}}|\mathcal{F}_{d+1} (q_M*\Delta f) |^2 d\tau d\xi.
$$
By the properties of the Fourier transform,
\begin{align*}
\mathcal{F}_{d+1} (q_M*\Delta f)(\tau,\xi) = -N(d)|\xi|^2\cF_{d+1} (q_M) (\tau,\xi) \cF_{d+1} (f) (\tau,\xi), \quad \quad \forall (\tau,\xi) \in \fR^{d+1}.
\end{align*}

Set
$$
I_M(\tau,\xi):=-|\xi|^2\cF_{d+1} (q_M) (\tau,\xi)
=\int_{0<t< M} e^{-i\tau t} \cF_d(\frac{\partial p}{\partial t})(t,\xi)dt.
$$
Now we claim  that $I_M(\tau,\xi)$ is bounded uniformly for $M>0$, i.e.
$$
\sup_{M>0,\tau,\xi} |I_M(\tau,\xi)| < \infty.
$$
Then the claim implies
\begin{equation}
              \label{eqn 11.14.1}
\|\cG_M f\|^2_{L_2}\leq N\|f\|^2_{L_2},
\end{equation}
where $N$ is independent of $M$.

By the integration by parts and the change of variables,
\begin{align}
                    \label{IM 1}
I_M(\tau,\xi)=i \text{sgn}(\tau) \int^{\tau M}_0 e^{- \text{sgn} (\tau) it}E_{\alpha}(-(\frac{t}{|\tau|})^{\alpha}||\xi|^2) dt+e^{-i\tau M} E_\alpha(-M^\alpha |\xi|^2)-1.
\end{align}
Denote for $\frac{\alpha\pi}{2}<\eta<\min\{\pi,\alpha\pi\}$
$$
\Delta^*_{\eta}:=\{z\in \mathbb{C}: |\pi-\text{Arg} \,z|<\pi-\eta\}.
$$
Then by \cite[(1.3.12)]{Dj} or \cite[Theorem 1.6]{Po},
$$
|E_{\alpha}(z)|=\frac{N}{1+|z|} , \quad z\in \Delta^*_{\eta}.
$$
Hence the function $E_{\alpha}(z)$ is bounded in $\Delta^*_{\eta}$.

Let $\tau>0$. We take a $\theta_0\in (0,\pi/3)$ sufficiently small so that $-e^{-i \theta},-e^{-i\alpha \theta}\in \Delta^*_{\eta}$ for any $\theta\in [0,\theta_0]$. Denote
$$
C_{1,\tau M}=\{t: t\in [0,\tau M]\}, \quad C_{2,\tau M}=\{te^{-i\theta_0}: t\in [0,\tau M]\},
$$
$$
 C_{3,\tau M}=\{\tau M e^{-i\theta}: \theta\in [0,\theta_0]\},
$$
and define a contour $C_{\tau M}=C_{1,\tau M} \cup C_{2,\tau M} \cup C_{3,\tau M}$.
Then the contour integral of the function $e^{-iz}E_{\alpha}(-(\frac{z}{\tau})^{\alpha}|\xi|^2)$ on $C_{\tau M}$ is zero for any $M>0$.
Since $E_{\alpha}(z)$ is bounded in $\Delta^*_{\eta}$,
\begin{align}
                    \label{IM 2}
\left|\int_{C_{3,\tau M}} e^{-iz}E_{\alpha}(-(\frac{z}{\tau})^{\alpha}|\xi|^2) dz\right|
&\leq \int^{\theta_0}_0 (\tau M) e^{-\tau M\sin \theta} d\theta \leq N(\theta_0).
\end{align}
Also,
\begin{equation}
               \label{eqn 10.16.1}
\left|\int_{C_{2,\tau M}} e^{-iz}E_{\alpha}(-(\frac{z}{\tau})^{\alpha}|\xi|^2) dz\right|
\leq N\int^{\tau M}_0 e^{-t\sin \theta_0}dt\leq N(\theta_0).
\end{equation}
Note that $\theta_0$ depends only on $\alpha$.
Hence by \eqref{IM 1}, \eqref{IM 2}, and \eqref{eqn 10.16.1}, it follows that if $\tau>0$ then $|I_{M}(\tau,\xi)|$ is bounded uniformly for $M$.

   If $\tau<0$ then we choose a contour $C'_{\tau M}=C_{1,\tau M}\cup C'_{2,\tau M}\cup C'_{3,\tau M}$ where
$$
C'_{2,\tau M}=\{te^{i\theta_0}: t\in [0,\tau M]\}, \quad C'_{3,\tau M}=\{\tau Me^{i\theta}: \theta\in [0,\theta_0]\}.
$$
Then the same arguments above go through. Thus our  claim is proved.

To finish the proof, observe that for each $(t,x) \in \bR^{d+1}$
$$
q_M*\Delta f(t,x) =\cG_M f(t,x) \to \cG f (t,x) \quad \text{as} \quad M \to \infty
$$
since $\Delta f \in C_c^{\infty}(\fR^{d+1})$.
Therefore
\eqref{eqn 11.14.1} and Fatou's lemma easily yields \eqref{9.19.3}.
\end{proof}

\mysection{BMO and $L_q(L_q)$-estimate}
                         \label{section model}
In this section we obtain BMO and $L_q(L_p)$-estimates for a solution of  the model equation
$$
\partial_{t}^{\alpha}u=\Delta u+f,  \quad  (t,x)\in(0,\infty)\times\mathbf{R}^{d}.
$$

Recall that $p$ is integrable with respect to $x$,  $\cF( p(t,\cdot))=E_{\alpha}(-t^{\alpha}|\xi|^2)$, and $q$ is defined as
$$
 q(t,x)=\begin{cases}I^{\alpha-1}p(t,x), & \alpha\in(1,2) \\ D_{t}^{1-\alpha}p(t,x), &  \alpha\in(0,1). \end{cases}
$$
Also recall that for $f\in C_c^\infty(\fR^{d+1}_+)$, it holds that
\begin{equation}
            \label{9.19.11}
\mathcal{G}f=\int_{-\infty}^{t}\left[\int_{\mathbf{R}^{d}}\Delta q(t-s,x-y) f(s,y)dy\right]ds,
\end{equation}
and by \eqref{9.19.3} the operator $\cG$ is continuously extended  onto $L_2(\fR^{d+1})$.  We   denote this extension  by the same notation $\cG$.

For a locally integrable function $h$ on $\mathbf{R}^{d+1}$, we
define the BMO semi-norm of $h$ on $\mathbf{R}^{d+1}$ as
$$
\|h\|_{BMO(\mathbf{R}^{d+1})}=\sup_{Q\in\mathbb{Q}}\frac{1}{|Q|}\int_{Q}|h(t,x)-h_{Q}|dtdx
$$
where $h_{Q}=\frac{1}{|Q|}\int_{Q}h(t,x)dtdx$ and
$$
\mathbb{Q}:=\{Q_{\delta}(t_{0},x_{0})=(t_{0}-\delta^{2/\alpha},t_{0}+\delta^{2/\alpha})\times B_{\delta}(x_{0})\ :\ \delta>0,~(t_{0},x_{0})\in\mathbf{R}^{d+1}\}.
$$
Denote $Q_{\delta}:=Q_{\delta}(0,0)$.

\begin{lem}\label{lem:pu0}
Let $f\in L_{2}(\mathbf{R}^{d+1})$ and vanish on $\mathbf{R}^{d+1}\setminus Q_{3\delta}$.
Then
$$
\aint_{Q_{\delta}}|\mathcal{G}f(t,x)|dtdx\leq N\|f\|_{L_{\infty}(\mathbf{R}^{d+1})},
$$
where $N=N(d,\alpha)$.
\end{lem}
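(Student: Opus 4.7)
The plan is to prove this by a simple application of Cauchy--Schwarz together with the $L_2$-boundedness of $\mathcal{G}$ established in Lemma \ref{lem:L2}, exploiting the fact that $f$ has bounded support so that $\|f\|_{L_2}$ is controlled by $\|f\|_{L_\infty}$ times a factor depending only on $\delta$ and dimension.

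First I would observe that if $\|f\|_{L_\infty(\mathbf{R}^{d+1})}=\infty$ the inequality is trivial, so we may assume $f\in L_\infty$. Since $f$ is also supported in the bounded set $Q_{3\delta}$, we have
$$
\|f\|_{L_2(\mathbf{R}^{d+1})}\leq |Q_{3\delta}|^{1/2}\|f\|_{L_\infty(\mathbf{R}^{d+1})}.
$$
Next, by Hölder's (or Cauchy--Schwarz) inequality applied on $Q_\delta$,
$$
\aint_{Q_\delta}|\mathcal{G}f(t,x)|\,dtdx \leq \Bigl(\aint_{Q_\delta}|\mathcal{G}f(t,x)|^{2}\,dtdx\Bigr)^{1/2}
\leq |Q_\delta|^{-1/2}\|\mathcal{G}f\|_{L_2(\mathbf{R}^{d+1})}.
$$

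Then I would invoke Lemma \ref{lem:L2}, which extends by density (once the estimate \eqref{9.19.3} is established on $C_c^\infty$, as it is in Lemma \ref{lem:L2}) to all $f\in L_2(\mathbf{R}^{d+1})$, giving $\|\mathcal{G}f\|_{L_2}\leq N(d,\alpha)\|f\|_{L_2}$. Combining the three displays gives
$$
\aint_{Q_\delta}|\mathcal{G}f(t,x)|\,dtdx\leq N(d,\alpha)\,\Bigl(\frac{|Q_{3\delta}|}{|Q_\delta|}\Bigr)^{1/2}\|f\|_{L_\infty(\mathbf{R}^{d+1})}.
$$
Finally, since $|Q_\delta|=2\delta^{2/\alpha}|B_\delta|$ and $|Q_{3\delta}|=2\cdot 3^{2/\alpha}\delta^{2/\alpha}\cdot 3^d|B_\delta|$, the ratio $|Q_{3\delta}|/|Q_\delta|=3^{d+2/\alpha}$ depends only on $d$ and $\alpha$, which yields the claim with $N=N(d,\alpha)$.

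There really is no serious obstacle here: the work lies entirely in the fact that Lemma \ref{lem:L2} already gave the $L_2\to L_2$ boundedness of $\mathcal{G}$. The only point to be a bit careful about is that $\mathcal{G}f$ is defined a priori only on $C_c^\infty$ via \eqref{9.19.11}, so one should note that $\mathcal{G}$ extends continuously to $L_2(\mathbf{R}^{d+1})$ by the density of $C_c^\infty$ in $L_2$, and it is this extension that is applied to the given $f\in L_2$.
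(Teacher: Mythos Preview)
Your proof is correct and follows essentially the same approach as the paper's own proof: apply H\"older's inequality on $Q_\delta$, use the $L_2$-boundedness of $\mathcal{G}$ from Lemma \ref{lem:L2}, and then exploit the compact support of $f$ in $Q_{3\delta}$ to bound $\|f\|_{L_2}$ by $|Q_{3\delta}|^{1/2}\|f\|_{L_\infty}$, so that the $\delta$-dependence cancels into a constant depending only on $d$ and $\alpha$. Your additional remarks about the density extension of $\mathcal{G}$ to $L_2$ and the explicit computation of $|Q_{3\delta}|/|Q_\delta|$ are correct and make the argument slightly more self-contained than the paper's version.
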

\begin{proof}
By H\"{o}lder's inequality and Lemma \ref{lem:L2},
\begin{eqnarray*}
&&\int_{Q_\delta}|\mathcal{G}f(t,x)|dtdx  \leq\left(\int_{Q_\delta}|\mathcal{G}f(t,x)|^{2}dtdx\right)^{1/2}|Q_\delta|^{1/2}\\
 && \leq\|\mathcal{G}f\|_{L_{2}(\mathbf{R}^{d+1})}|Q_\delta|^{1/2}
 \leq N\|f\|_{L_{2}(\mathbf{R}^{d+1})}|Q_\delta|^{1/2}\\
 && =N\left(\int_{Q_{3\delta}}|f(t,x)|^{2}dtdx\right)^{1/2}|Q_\delta|^{1/2}
 \leq N\|f\|_{L_{\infty}(\mathbf{R}^{d+1})}|Q_\delta|.
\end{eqnarray*}
The lemma is proved.
\end{proof}

Denote
$$
K(t,x)=1_{t>0}\Delta q(t,x).
$$
Due to Lemma \ref{prop:kernel estimate}(ii), we have $K(t,x)=1_{t>0}\partial_{t}p(t,x)$.
Furthermore the following scaling properties hold (see \eqref{K equal} and \eqref{appendix:diff} for detail):
\begin{align}
			\label{scaling property1}
K(t,x)=1_{t>0}t^{-1-\frac{\alpha d}{2}}K(1,t^{-\alpha/2}x)			
\end{align}
\begin{align}
			\label{scaling property2}
\partial_{t}K(t,x)=1_{t>0}t^{-2-\frac{\alpha d}{2}}(\partial_{t}K)(1,t^{-\alpha/2}x),		
\end{align}
and
\begin{align}
			\label{scaling property3}
\frac{\partial}{\partial x^i} K(t,x)=1_{t>0}t^{-1-\frac{\alpha (d+1)}{2}} \frac{\partial}{\partial x^i} K(1,t^{-\alpha/2}x).			
\end{align}

\begin{lem}
				\label{lem:BMO assumption}
There exists a constant $N=N(\alpha,d)$ such that

(i) for any $t>a$ and $\eta>0$,
\begin{align}\label{K:ass1}
\int_{a}^{t}\int_{|y|\geq\eta}|K(t-s,y)|dyds \leq N (t-a)\eta^{-2/\alpha};
\end{align}

(ii) for any $t>\tau>a$,
\begin{align}\label{K:ass2}
\int_{-\infty}^{a}\int_{\mathbf{R}^{d}}|K(t-s,y)-K(\tau-s,y)|dyds \leq N\frac{t-\tau}{\tau-a};
\end{align}

(iii) for any $t>a$ and $x\in\mathbf{R}^{d}$,
\begin{align}\label{K:ass3}
\int_{-\infty}^{a}\int_{\mathbf{R}^{d}}|K(t-s,x+y)-K(t-s,y)|dyds \leq N|x|(t-a)^{-\alpha/2}.
\end{align}
\end{lem}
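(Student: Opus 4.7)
The plan is to reduce each of the three estimates to a weighted $L_1$ norm of $K(1,\cdot)$ or its derivatives, and then carry this through using the scaling relations \eqref{scaling property1}--\eqref{scaling property3}. As preparation I would verify from Lemma \ref{p exists}(ii), applied with $(n,m)=(1,0),(2,0),(1,1)$, that
\begin{equation*}
M_{0}:=\int_{\fR^{d}}|z|^{2/\alpha}|K(1,z)|\,dz,\qquad M_{1}:=\|\partial_{t}K(1,\cdot)\|_{L_{1}(\fR^{d})},\qquad M_{2}:=\sum_{i}\|\partial_{x^{i}}K(1,\cdot)\|_{L_{1}(\fR^{d})}
\end{equation*}
are all finite. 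For $|z|\geq 1$ the bound \eqref{p-1} gives super-polynomial decay $\exp(-\sigma|z|^{2/(2-\alpha)})$, which absorbs any power weight, while for $|z|\leq 1$ the bound \eqref{p-3} yields local singularities of type $|z|^{2-d}$ for $K$ and $\partial_{t}K$ and of type $|z|^{1-d}$ for the first spatial derivatives; in the critical dimensions $d=1,2$ the correction terms $R\ln R\cdot 1_{d=2}$ and $R^{1/2}\cdot 1_{d=1}$ in \eqref{p-3} are exactly what is needed for local integrability.

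For (i), I would substitute $u=t-s$ and then $z=u^{-\alpha/2}y$, using \eqref{scaling property1}, to rewrite the left side as
\begin{equation*}
\int_{0}^{t-a}u^{-1}\int_{|z|\geq \eta u^{-\alpha/2}}|K(1,z)|\,dz\,du.
\end{equation*}
A Chebyshev-type bound with the weight $|z|^{2/\alpha}$ yields $\int_{|z|\geq M}|K(1,z)|\,dz\leq M_{0}\,M^{-2/\alpha}$; taking $M=\eta u^{-\alpha/2}$ makes $M^{-2/\alpha}=\eta^{-2/\alpha}u$, so the $u^{-1}$ and $u$ cancel, and integrating over $u\in(0,t-a)$ produces the desired bound $NM_{0}\eta^{-2/\alpha}(t-a)$.

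For (ii) and (iii) the idea is to apply the fundamental theorem of calculus. In (ii) I would write $|K(t-s,y)-K(\tau-s,y)|\leq\int_{\tau}^{t}|\partial_{r}K(r-s,y)|\,dr$; integrating in $y$ and using \eqref{scaling property2} gives $\|\partial_{r}K(r-s,\cdot)\|_{L_{1}}=M_{1}(r-s)^{-2}$. Then $\int_{-\infty}^{a}(r-s)^{-2}\,ds=(r-a)^{-1}$, and finally
\begin{equation*}
\int_{\tau}^{t}(r-a)^{-1}\,dr=\log\!\Bigl(1+\frac{t-\tau}{\tau-a}\Bigr)\leq\frac{t-\tau}{\tau-a}.
\end{equation*}
In (iii), $|K(t-s,x+y)-K(t-s,y)|\leq|x|\int_{0}^{1}|D_{x}K(t-s,y+\theta x)|\,d\theta$; integration in $y$ combined with \eqref{scaling property3} gives $\|D_{x}K(t-s,\cdot)\|_{L_{1}}\leq M_{2}(t-s)^{-1-\alpha/2}$, and $\int_{-\infty}^{a}(t-s)^{-1-\alpha/2}\,ds=\tfrac{2}{\alpha}(t-a)^{-\alpha/2}$ closes the estimate. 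The main obstacle is the verification that $M_{0}, M_{1}, M_{2}$ are finite in the low dimensions $d=1,2$, where the explicit correction terms in \eqref{p-3} are essential; once those are in hand, the rest reduces to a change of variables, Chebyshev, and Fubini.
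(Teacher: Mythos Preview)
Your proposal is correct and follows essentially the same route as the paper: finiteness of the weighted $L_1$ norms $M_0,M_1,M_2$ via the pointwise kernel bounds, then the scaling relations \eqref{scaling property1}--\eqref{scaling property3} combined with a Chebyshev-type argument for (i) and the fundamental theorem of calculus for (ii) and (iii). The only cosmetic difference is in (ii), where you integrate $\int_\tau^t (r-a)^{-1}\,dr$ to a logarithm before bounding by $(t-\tau)/(\tau-a)$, whereas the paper parametrizes the mean value as $\theta t+(1-\theta)\tau$ and uses the cruder pointwise bound $(\theta t+(1-\theta)\tau-s)^{-2}\leq(\tau-s)^{-2}$ directly; both give the same final estimate.
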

\begin{proof}
First observe that
\begin{align*}
			 \int_{\mathbf{R}^{d+1}}\left(|y|^{2/\alpha}|K(1,y)|+|D_x K(1,y)|+|\partial_{t}K(1,y)|\right)dy<\infty,
\end{align*}
which is an easy consequence of Lemma \ref{prop:kernel estimate}.
 By \eqref{scaling property1}, it holds that
\begin{align*}
\int_{a}^{t}\int_{|y|\geq\eta}|K(t-s,y)|dyds &= \int_{a}^{t}(t-s)^{-1-\frac{\alpha d}{2}}\int_{|y|\geq \eta}K(1,(t-s)^{-\alpha/2}y)dyds \\
	&=\int_{a}^{t}(t-s)^{-1}\int_{|y|\geq \eta(t-s)^{-\alpha/2}}K(1,y)dyds \\
	&\leq \left(\int_{a}^{t}\eta^{-2/\alpha}ds\right)\left(\int_{\mathbf{R}^{d}}|y|^{2/\alpha}K(1,y)dy\right)\\
	&\leq N(t-a)\eta^{-2/\alpha}.
\end{align*}
Hence  (i) is proved.

 Next
 we prove (ii) and (iii)   on the basis of the scaling property.
By \eqref{scaling property2},  \eqref{scaling property3}, and the mean-value theorem, we have
\begin{align*}
&\int_{-\infty}^{a}\int_{\mathbf{R}^{d}}|K(t-s,y)-K(\tau-s,y)|dyds \\
	&\leq (t-\tau)\int_{-\infty}^{a}\int_{\mathbf{R}^{d}}\int_{0}^{1}|\partial_s K(\theta t+(1-\theta)\tau-s,y)|d\theta dyds \\
	&\leq (t-\tau)\int_{0}^{1}\left(\int_{-\infty}^{a}(\theta t+(1-\theta)\tau-s)^{-2}ds\right)\left(\int_{\fR^d}|\partial_s K(1,y)|dy\right)d\theta\\
	&\leq (t-\tau)\left(\int_{-\infty}^{a}(\tau-s)^{-2}ds\right)\leq N\frac{t-\tau}{\tau-a}.
\end{align*}
Also,
\begin{align*}
&\int_{-\infty}^{a}\int_{\mathbf{R}^{d}}|K(t-s,y+x)-K(t-s,y)|dyds \\
		&\leq |x|\int_{-\infty}^{a}\int_{\mathbf{R}^{d}}\int_{0}^{1}|D_{y}K(t-s,y+\theta x)|d\theta dyds \\
		&\leq |x|\int_{t-a}^{\infty}\int_{\mathbf{R}^{d}}|D_{y}K(s,y)|dyds \\
		&\leq |x|\int_{t-a}^{\infty}\int_{\mathbf{R}^{d}}|1_{t>0}t^{-1-\frac{\alpha }{2}} D_y K(1,y))|dyds \leq N|x|(t-a)^{-\alpha/2}.
\end{align*}
The lemma is proved.
\end{proof}

\begin{lem}
				\label{lem:pu1}
Let $f\in L_{2}(\mathbf{R}^{d+1})$ and $f=0$ on $Q_{2\delta}$. Then
\begin{align}
                \label{mean osc est}
\aint_{Q_{\delta}}\aint_{Q_{\delta}}|\mathcal{G}f(t,x)-\mathcal{G}f(s,y)|dsdydtdx\leq N(d,\alpha)\|f\|_{L_{\infty}(\mathbf{R}^{d+1})}.
\end{align}
\end{lem}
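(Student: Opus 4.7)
The plan is to establish a uniform pointwise bound $|\mathcal{G}f(t,x)-\mathcal{G}f(s,y)|\leq N(d,\alpha)\|f\|_{L_{\infty}(\mathbf{R}^{d+1})}$ for $(t,x),(s,y)\in Q_{\delta}$, which clearly implies \eqref{mean osc est}. After approximating $f$ by functions in $C_{c}^{\infty}(\mathbf{R}^{d+1})$ vanishing on $Q_{2\delta}$ (which makes the kernel representation \eqref{9.19.11} pointwise meaningful, since the relevant integrals of $K$ will be shown to be absolutely convergent) and passing to a subsequence converging a.e., it suffices to work directly with that representation. Set $a^{*}:=-(2\delta)^{2/\alpha}$ and split $\mathcal{G}f=J_{1}+J_{2}$, where $J_{1}$ integrates over $r\leq a^{*}$ and $J_{2}$ over $r\in(a^{*},t)$.

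First I would dispose of $J_{2}$ by a pointwise bound. For $(t,x)\in Q_{\delta}$ and $r\in(a^{*},t)$, one has $|r|<(2\delta)^{2/\alpha}$, so the hypothesis that $f$ vanishes on $Q_{2\delta}$ forces $|z|\geq 2\delta$ in the integrand; in particular $|x-z|\geq\delta$. Then \eqref{K:ass1} with $a=a^{*}$ and $\eta=\delta$ gives
\[
|J_{2}(t,x)|\leq\|f\|_{L_{\infty}}\int_{a^{*}}^{t}\int_{|x-z|\geq\delta}|K(t-r,x-z)|\,dz\,dr\leq N\|f\|_{L_{\infty}}(t-a^{*})\delta^{-2/\alpha}\leq N\|f\|_{L_{\infty}},
\]
since $t-a^{*}\asymp\delta^{2/\alpha}$ uniformly for $t\in(-\delta^{2/\alpha},\delta^{2/\alpha})$. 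Thus $\sup_{Q_{\delta}}|J_{2}|\leq N\|f\|_{L_{\infty}}$, controlling its contribution to the mean oscillation.

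For $J_{1}$ I would use the standard telescoping
\[
J_{1}(t,x)-J_{1}(s,y)=\int_{-\infty}^{a^{*}}\!\!\int_{\mathbf{R}^{d}}[K(t-r,x-z)-K(s-r,x-z)]f\,dzdr+\int_{-\infty}^{a^{*}}\!\!\int_{\mathbf{R}^{d}}[K(s-r,x-z)-K(s-r,y-z)]f\,dzdr.
\]
Assuming WLOG $t\geq s$, estimate \eqref{K:ass2} with $\tau=s$ and $a=a^{*}$ bounds the first integral by $N\|f\|_{L_{\infty}}|t-s|/(s-a^{*})$, while \eqref{K:ass3} bounds the second by $N\|f\|_{L_{\infty}}|x-y|(s-a^{*})^{-\alpha/2}$. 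Since $|t-s|\leq 2\delta^{2/\alpha}$, $|x-y|\leq 2\delta$, and $s-a^{*}\geq(2^{2/\alpha}-1)\delta^{2/\alpha}$, both quantities are bounded by $N\|f\|_{L_{\infty}}$ with $N=N(d,\alpha)$. Combined with the $J_{2}$ estimate this yields the desired pointwise (hence mean-oscillation) bound.

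The main subtlety is the choice of the cutoff $a^{*}$. The naive choice $a^{*}=-\delta^{2/\alpha}$ (the time-boundary of $Q_{\delta}$ itself) would allow $s-a^{*}$ to vanish as $s\downarrow-\delta^{2/\alpha}$, making the contribution from \eqref{K:ass2} non-integrable even after averaging and blocking the argument. Pushing the cutoff back to $a^{*}=-(2\delta)^{2/\alpha}$ produces the uniform separation $\min(t,s)-a^{*}\asymp\delta^{2/\alpha}$ that is needed; this shift is possible precisely because $f$ vanishes on $Q_{2\delta}$ (rather than only on $Q_{\delta}$), which is exactly what the support condition in the hypothesis provides.
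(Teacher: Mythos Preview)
Your proof is correct and follows essentially the same approach as the paper: both use the kernel estimates \eqref{K:ass1}--\eqref{K:ass3}, the crucial cutoff at $a^{*}=-(2\delta)^{2/\alpha}$ (exploiting that $f$ vanishes on $Q_{2\delta}$, not just $Q_{\delta}$), and an approximation by $C_c^\infty$ functions with Fatou's lemma at the end. The only cosmetic difference is that the paper compares each $(t,x)\in Q_\delta$ to a fixed reference point $(-\delta^{2/\alpha},0)$ via the intermediate $(t,0)$, whereas you compare two arbitrary points directly by telescoping---the estimates involved are identical.
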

\begin{proof}

First  assume $f \in C_c^\infty (\fR^{d+1})$. We claim that
\begin{align}
                    \label{mean osc in zero 0}
\aint_{Q_{\delta}} |  \mathcal{G} f (t,x)- \mathcal{G} f (-\delta^{2/\alpha},0)|dt dx  \leq   N \| f\|_{L_{\infty}(\mathbf{R}^{d+1})}.
\end{align}
Let $(t,x) \in Q_\delta$. Then
\begin{align*}
&|\mathcal{G} f (t,x)- \mathcal{G} f (-\delta^{2/\alpha},0)|\\
&\leq | \mathcal{G} f (t,x)-  \mathcal{G} f(t,0)| + | \mathcal{G} f(t,0)- \mathcal{G} f (-\delta^{2/\alpha},0)|=: \cI_1 + \cI_2.
\end{align*}

We consider $\cI_1$ first.
\begin{align*}
\cI_1
&= \left| \int_{-\infty}^{t} \int_{\fR^d}  \big(K(t-s,x-y)-K(t-s,-y)\big) f(s,y)dy ds\right| \\
&= \left|\int^{t}_{-(2\delta)^{2/\alpha}}\int_{\fR^d} \cdots dyds +\int_{-\infty}^{-(2\delta)^{2/\alpha}} \int_{\fR^d}\cdots dyds \right|\\
&\leq  \int_{-(2\delta)^{2/\alpha}}^{t} \int_{\fR^d}  |K(t-s,y)| |f(s, x-y)|dy ds \\
&\quad+ \int_{-(2\delta)^{2/\alpha}}^{t} \int_{\fR^d}  |K(t-s,y)| |f(s ,-y)|dy ds  \\
&\quad+\int_{-\infty}^{-(2\delta)^{2/\alpha}} \int_{\fR^d}  \left|K(t-s,x-y)-K(t-s,-y)\right| |f(s, y)|dy ds \\
&\quad=:\cI_{11}+\cI_{12}+\cI_{13}.
\end{align*}
Note that if $-(2\delta)^{2/\alpha} < s  \leq t < \delta^{2/\alpha}$ and $|y| \leq \delta$, then
\begin{align}           \label{zero con}
f(s,  x-y) =0,\quad f(s,-y)=0,
\end{align}
 because $|x-y| \leq 2\delta$, $|-y| \leq \delta$, and $f=0$ on $Q_{2\delta}$.
Hence by \eqref{K:ass1}, $\cI_{11}+\cI_{12}$ is less than or equal to
\begin{align*}
&N \|f\|_{L_{\infty}(\mathbf{R}^{d+1})} \int_{-(2\delta)^{2/\alpha}}^{t} \int_{|y| \geq \delta} | K(t-s,y)|dyds\\
&\leq N \|f\|_{L_{\infty}(\mathbf{R}^{d+1})} (t+(2\delta)^{2/\alpha})\delta^{-2/\alpha} \leq N \|f\|_{L_{\infty}(\mathbf{R}^{d+1})}.
\end{align*}
Also, by \eqref{K:ass3} we have
\begin{align*}
\cI_{13}&\leq N \|f\|_{L_{\infty}(\mathbf{R}^{d+1})}  \int_{-\infty}^{-(2\delta)^{2/\alpha}} \int_{\fR^d} \left|K(t-s, x-y)-K(t-s,-y)\right|dyds\\
&\leq N \|f\|_{L_{\infty}(\mathbf{R}^{d+1})} |x| (t+(2\delta)^{2/\alpha})^{-\alpha/2} \leq N \|f\|_{L_{\infty}(\mathbf{R}^{d+1})}.
\end{align*}

Next, we consider $\cI_2$. Note that
\begin{align*}
&\cI_2=\left|  \mathcal{G} f (t,0) -  \mathcal{G} f(-\delta^{2/\alpha},0) \right| \\
&= \left| \int_{-\infty}^{t} \int_{\fR^d}  K(t-s, y)  f(s,-y)dy ds - \int_{-\infty}^{-\delta^{2/\alpha}} \int_{\fR^d}  K(-\delta^{2/\alpha}-s,-y)  f(s,-y)dyds \right|\\
&\leq \left| \int_{-\infty}^{t} \int_{\fR^d} K(t-s, y)  f(s,-y)dsdy - \int_{-\infty}^{-\delta^{2/\alpha}} \int_{\fR^d} K(t-s,y)  f(s,-y)dyds \right|\\
&+\left| \int_{-\infty}^{-\delta^{2/\alpha}} \int_{\fR^d} \left[K(t-s,y)-K(-\delta^{2/\alpha}-s, y) \right] f(s,-y)dy ds \right|=: \cI_{21} + \cI_{22}.
\end{align*}
Recall \eqref{zero con}. Thus by \eqref{K:ass1}, we have
\begin{align*}
\cI_{21}
&\leq  \int_{-\delta^{2/\alpha}}^{t} \int_{\fR^d} |K(t-s, y)f(s,-y)|dyds \\
&\leq  \|f\|_{L_{\infty}(\mathbf{R}^{d+1})} \int_{-\delta^{2/\alpha}}^{t} \int_{|y| \geq \delta} | K(t-s, y)|dyds \\
&\leq  N \|f\|_{L_{\infty}(\mathbf{R}^{d+1})}(t+\delta^{2/\alpha})\delta^{-2/\alpha}  \leq N\|f\|_{L_{\infty}(\mathbf{R}^{d+1})}.
\end{align*}
Also,
\begin{align*}
\cI_{22}
&\leq  \int_{-(2\delta)^{2/\alpha}}^{-\delta^{2/\alpha}} \int_{\fR^d} | K(t-s,y) - K(-\delta^{2/\alpha}-s,y)| |f(s,-y)|dy ds \\
&\quad+\|f\|_{L_{\infty}(\mathbf{R}^{d+1})} \int_{-\infty}^{-(2\delta)^{2/\alpha}} \int_{\fR^d} \left| K(t-s, y) - K(-\delta^{2/\alpha}-s, y) \right| dy ds \\
&=: \cI_{221}+ \cI_{222}.
\end{align*}
By \eqref{K:ass1} again, we have
\begin{align*}
\cI_{221}
&\leq  \|f\|_{L_{\infty}(\mathbf{R}^{d+1})} \int_{-(2\delta)^{2/\alpha}}^{t} \int_{|t| \geq \delta} | K(t-s, t)|dyds \\
&\quad+ \|f\|_{L_{\infty}(\mathbf{R}^{d+1})}\int_{-(2\delta)^{2/\alpha}}^{-\delta^{2/\alpha}} \int_{|y| \geq \delta} | K(-\delta^{2/\alpha}-s, y) | dy ds\\
&\leq N \|f\|_{L_{\infty}(\mathbf{R}^{d+1})}.
\end{align*}
On the other hand, by \eqref{K:ass2} we obtain
$$
\cI_{222} \leq \frac{t+\delta^{2/\alpha}}{-\delta^{2/\alpha}+(2\delta)^{2/\alpha}} \|f\|_{L_{\infty}(\mathbf{R}^{d+1})} \leq N \|f\|_{L_{\infty}(\mathbf{R}^{d+1})}.
$$
Hence \eqref{mean osc in zero 0} is proved and this obviously implies \eqref{mean osc est} for $f \in C_c^\infty(\fR^{d+1})$.

 Now we consider the general case, that is  $f \in L_2(\fR^{d+1})$.
 We choose a sequence of functions $f_n \in C_c^\infty(\fR^{d+1})$ such that $f_n=0$ on $Q_{2\delta}$,
$\mathcal{G} f_n \to \mathcal{G} f~(a.e.)$, and $ \|f_n\|_{L_{\infty}(\mathbf{R}^{d+1})} \leq \|f\|_{L_{\infty}(\mathbf{R}^{d+1})}$.
Then by Fatou's lemma,
\begin{align*}
&\aint_{Q_{\delta}}\aint_{Q_{\delta}} |  \mathcal{G} f (t,x)- \mathcal{G} f (s,y)|~dtds dxdy \\
&\leq \liminf_{n \to \infty} \aint_{Q_{\delta}} \aint_{Q_{\delta}} |  \mathcal{G} f_n (t,x)- \mathcal{G} f_n (s,y)|dtds dxdy \\
&\leq N\liminf_{n \to \infty} \|f_n\|_{L_{\infty}(\mathbf{R}^{d+1})} \leq N \|f\|_{L_{\infty}(\mathbf{R}^{d+1})}.
\end{align*}
The lemma is proved.
\end{proof}

\begin{thm}
                    \label{BMO theorem}
(i) For any $f \in L_{2}(\mathbf{R}^{d+1})\cap L_{\infty}(\mathbf{R}^{d+1})$,
\begin{equation}
                     \label{eq:main result}
\|\mathcal{G} f \|_{BMO(\mathbf{R}^{d+1})} \leq N(d,\alpha) \|f\|_{L_\infty(\mathbf{R}^{d+1})}.
\end{equation}

(ii) For any $p,q\in (1,\infty)$ and $f \in C_c^\infty(\fR^{d+1})$,
\begin{equation}
                 \label{Lp estimate}
\|\mathcal{G} f \|_{L_q( \mathbf{R},L_{p} )} \leq N(d,p,q,\alpha) \|f\|_{L_q( \mathbf{R},L_{p} )}.
\end{equation}
\end{thm}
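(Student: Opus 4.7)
\medskip
\textbf{Proof plan.} For part (i), I would fix a parabolic cube $Q=Q_\delta(t_0,x_0)\in\mathbb{Q}$ and use the translation invariance of $\mathcal{G}$ in $(t,x)$ to reduce to $(t_0,x_0)=(0,0)$. Then I would split $f=f_1+f_2$ with $f_1:=f\cdot 1_{Q_{3\delta}}$, and apply
$$
\aint_Q\bigl|\mathcal{G}f-(\mathcal{G}f)_Q\bigr|\,dtdx\leq \aint_Q\aint_Q\bigl|\mathcal{G}f(t,x)-\mathcal{G}f(s,y)\bigr|\,dtdxdsdy.
$$
The triangle inequality together with Lemma \ref{lem:pu0} will bound the $f_1$-contribution by $2\aint_Q|\mathcal{G}f_1|\leq N\|f\|_{L_\infty}$, while Lemma \ref{lem:pu1} will bound the $f_2$-contribution by $N\|f\|_{L_\infty}$ directly, since $f_2=0$ on $Q_{2\delta}$. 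Taking the supremum over $Q\in\mathbb{Q}$ will conclude \eqref{eq:main result}.

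Part (ii) I would handle in two independent steps. First, I would dispose of the diagonal case $p=q$ by combining the $L_2$-boundedness of Lemma \ref{lem:L2} with the $L_\infty\to BMO$ estimate of part (i); the classical interpolation theorem between $L_2$ and $BMO$, applied via a Fefferman-Stein sharp-function argument with the doubling parabolic family $\mathbb{Q}$, yields
$$
\|\mathcal{G}f\|_{L_p(\fR^{d+1})}\leq N(d,p,\alpha)\|f\|_{L_p(\fR^{d+1})},\quad p\in[2,\infty).
$$
For $p\in(1,2)$ I would then appeal to duality: the formal adjoint of $\mathcal{G}$ has kernel $K(s-t,y-x)$, which satisfies the symmetric analogues of the hypotheses of Lemma \ref{lem:BMO assumption}, so the same argument goes through for $\mathcal{G}^*$.

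For the second step (the genuine mixed-norm case), I would view $\mathcal{G}$ as a convolution in $t$ with the operator-valued kernel $T_t\in\mathcal{L}(L_p(\fR^d))$ given by
$$
(T_t\phi)(x):=1_{t>0}\int_{\fR^d}K(t,x-y)\phi(y)\,dy,
$$
so that $\mathcal{G}f(t,\cdot)=\int_\fR T_{t-s}f(s,\cdot)\,ds$. The first step (with exponent $p$) supplies the $L_p(\fR;L_p(\fR^d))$-boundedness, and the operator-valued Calderon-Zygmund (Benedek-Calderon-Panzone) theorem then upgrades this to $L_q(\fR;L_p(\fR^d))$-boundedness for every $q\in(1,\infty)$, provided the H\"ormander condition
$$
\int_{|t|\geq 2|s|}\|T_{t-s}-T_t\|_{L_p\to L_p}\,dt\leq N,\quad s\neq 0,
$$
holds. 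I would verify this by combining Young's inequality with the scaling identity \eqref{scaling property2} to obtain
$$
\|\partial_t T_t\|_{L_p\to L_p}\leq \|\partial_t K(t,\cdot)\|_{L_1(\fR^d)}=t^{-2}\|\partial_t K(1,\cdot)\|_{L_1(\fR^d)}\leq Nt^{-2},\quad t>0,
$$
and then writing $T_{t-s}-T_t=-\int_0^s\partial_r T_{t-r}\,dr$ (using $T_t\equiv 0$ for $t\leq 0$) to reduce the H\"ormander integral to the elementary $\int_{|t|\geq 2|s|}|s|t^{-2}\,dt\leq N$. The principal obstacle will be securing the $L_1$-integrability $\partial_t K(1,\cdot)=\partial_t^2 p(1,\cdot)\in L_1(\fR^d)$, which is dimension-sensitive because of the near-origin behavior of $p$; however, Lemma \ref{p exists}(ii) handles the cases $d=1$, $d=2$, and $d\geq 3$ separately and delivers the integrability uniformly in dimension.
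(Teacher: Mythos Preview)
Your proposal is correct and follows essentially the same route as the paper's proof: the same localization via Lemmas \ref{lem:pu0} and \ref{lem:pu1} for part (i), the same sharp-function interpolation plus duality for the diagonal case $p=q$, and the same operator-valued Calder\'on--Zygmund argument in the time variable (driven by the scaling bound $\|\partial_t K(t,\cdot)\|_{L_1}\leq Nt^{-2}$) for the mixed-norm case. The only cosmetic differences are that the paper uses a smooth cutoff $\zeta$ rather than your sharp indicator $1_{Q_{3\delta}}$, and the paper states the kernel condition in Krylov's cube form and then does a separate duality step for $q>p$, whereas you invoke the Benedek--Calder\'on--Panzone formulation directly; both are equivalent.
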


\begin{proof}
(i) It suffices to prove that for each $Q=Q_{\delta}(t_{0},x_{0})$
the following holds :
\begin{equation}
             \label{eqn 11.03.1}
\aint_{Q}|\mathcal{G}f(t,x)-(\mathcal{G}f)_{Q}|dtdx \leq N\|f\|_{L_{\infty}(\mathbf{R}^{d+1})}.
\end{equation}
Due to the translation invariant property of the operator $\cG$, we may assume that $(t_{0},x_{0})=0$.
Thus
$$
Q=Q_{\delta}=(-\delta^{2/\alpha},\delta^{2/\alpha})\times B_{\delta}.
$$
Take $\zeta \in C_{c}^{\infty}(\mathbf{R}^{d+1})$ such that $\zeta=1$ on $Q_{2\delta}$ and $\zeta=0$ outside $Q_{3\delta}$. Then
\begin{align*}
&\aint_{Q}|\mathcal{G}f(t,x)-(\mathcal{G}f)_{Q}|dtdx \\
&\leq \aint_{Q}\left|\mathcal{G}(\zeta f)-(\mathcal{G}(\zeta f))_{Q}\right|dtdx +\aint_{Q}\left|\mathcal{G}((1-\zeta) f)-(\mathcal{G}((1-\zeta) f))_{Q}\right|dtdx\\
&\leq 2\aint_{Q}|\mathcal{G}(\zeta f)|dtdx+\aint_{Q}\aint_{Q}|\mathcal{G}((1-\zeta) f)(t,x)-\mathcal{G}((1-\zeta) f)(s,y)|dsdydtdx.
\end{align*}
Thus \eqref{eqn 11.03.1} comes from  Lemma \ref{lem:pu0} and Lemma \ref{lem:pu1}.

(ii) {\bf Step 1}.
We prove \eqref{Lp estimate} for the case $q=p$. First we assume $q=p\geq 2$. For a measurable function $h(t,x)$ on $\mathbf{R}^{d+1}$, we define the maximal function
$$
\mathcal{M} h(t,x)
= \sup_{Q\in \mathbb{Q}}\aint_{Q}|f(r,z)|drdz,
$$
and the sharp function
\begin{align*}
h^\sharp(t,x)
= \sup_{ Q \in \mathbb{Q}}\aint_{ Q} |f(r,z) - f_{ Q}|~drdz.
\end{align*}
 Then by the Fefferman-Stein theorem  and the Hardy-Littlewood maximal theorem
\begin{align}
                     \label{hardy littlewood}
\|h^\sharp\|_{L_p(\mathbf{R}^{d+1})} \sim \|\cM h\|_{L_p(\mathbf{R}^{d+1})} \sim \|h\|_{L_p(\mathbf{R}^{d+1})}.
\end{align}
Combining  Lemma \ref{lem:L2} with \eqref{hardy littlewood}, we get for  any $f \in L_{2}(\mathbf{R}^{d+1})\cap L_{\infty}(\mathbf{R}^{d+1})$,
$$
\|( \mathcal{G} f)^\sharp\|_{L_2(\mathbf{R}^{d+1})} \leq N \|f\|_{L_2(\mathbf{R}^{d+1})}.
$$
Also, by \eqref{eq:main result}
\begin{align*}
\|(\mathcal{G} f)^\sharp\|_{L_{\infty}(\mathbf{R}^{d+1})}\leq N \|f\|_{L_{\infty}(\mathbf{R}^{d+1})}.
\end{align*}
Note that the  map  $f \to (\mathcal{G} f)^\sharp$ is subadditive since $\mathcal{G}$ is a linear operator.
Hence by a version of  the Marcinkiewicz interpolation theorem (see  e.g. \cite[Lemma 3.4]{Kim2014BMOpseudo}),
 for any $p \in [2, \infty)$ there exists a constant $N$ such that
$$
\|( \mathcal{G} f)^\sharp\|_{L_p(\mathbf{R}^{d+1})} \leq N \|f\|_{L_p(\mathbf{R}^{d+1})},
$$
for all $f \in L_{2}(\mathbf{R}^{d+1})\cap L_{\infty}(\mathbf{R}^{d+1})$. Finally by the Fefferman-Stein theorem, we get
$$
\|\mathcal{G} f\|_{L_p(\mathbf{R}^{d+1})} \leq N(d,\alpha,p) \|f\|_{L_p(\mathbf{R}^{d+1})}.
$$
Therefore \eqref{Lp estimate} is proved for $q=p\in [2,\infty)$.

For $p\in (1,2)$, we use the duality argument. Let $f,g\in C_{c}^{\infty}(\mathbf{R}^{d+1})$ and $p'$ be the conjugate of $p$, i.e. $p'= \frac{p}{p-1} \in (2,\infty)$.
By  the integration by parts, the change of variable, and Fubini's theorem,
\begin{eqnarray}
&&\int_{\mathbf{R}^{d+1}}g(t,x)\mathcal{G}f(t,x)dxdt\nonumber\\
&&=\int_{\mathbf{R}^{d+1}}\Delta g(t,x)\left(\int_{\mathbf{R}^{d+1}}1_{t>s}q(t-s,x-y) f(s,y)dsdy\right)dxdt\nonumber\\
&&=\int_{\mathbf{R}^{d+1}}f(-s,-y)\left(\int_{\mathbf{R}^{d+1}}1_{s>t} q(s-t,y-x)\Delta g(-t,-x)dxdt\right)dsdy\nonumber\\
&&=\int_{\mathbf{R}^{d+1}}f(-s,-y)\mathcal{G}\tilde g(s,y)dsdy   \label{duality}
\end{eqnarray}
where $\tilde g(t,x)=g(-t,-x)$. Then by H\"{o}lder's inequality (also recall $2<p'<\infty$),
$$
\left|\int_{\mathbf{R}^{d+1}}g(t,x)\mathcal{G}f(t,x)dxdt\right| \leq N\|f\|_{L_{p}}\|\mathcal{G} \tilde g\|_{L_{p'}} \leq N\|f\|_{L_{p}}\|g\|_{L_{p'}}.
$$
Since $g\in C^{\infty}_c(\mathbf{R}^{d+1})$ is arbitrary, \eqref{eq:main result} is proved for $q=p \in (1,2)$ as well.

{\bf{Step 2}}. Now we prove \eqref{Lp estimate} for general $p,q\in(1,\infty)$.
For each $(t,s) \in \fR^2$, we define the operator $\mathcal{K}(t,s)$ as follows:
$$
\mathcal{K}(t,s)f(x):=\int_{\mathbf{R}^{d}}K(t-s,x-y)f(y)dy,\quad f\in C_{c}^{\infty}.
$$
Let $p\in (1,\infty)$ and $(t,s) \in \fR^2$. Then by the mean-value theorem, Lemma \ref{prop:kernel estimate}, and \eqref{scaling property1},
\begin{align*}
\|\mathcal{K}(t,s)f\|_{L_{p}}&=\left\|\int_{\mathbf{R}^{d}}K(t-s,x-y)f(y)dy\right\|_{L_{p}} \\
&\leq \|f\|_{L_{p}}\int_{\mathbf{R}^{d}}|K(t-s,y)|dy\leq N(d,\alpha)(t-s)^{-1}\|f\|_{L_{p}}.
\end{align*}
 Hence the operator $\mathcal{K}(t,s)$ is uniquely extendible to $L_{p}$ for $t\neq s$. Denote
$$
Q:=[t_{0},t_{0}+\delta),\quad Q^{*}:=[t_{0}-\delta,t_{0}+2\delta).
$$
Note that for $t\notin Q^{*}$ and $s,r\in Q$, we have
$$
|s-r|\leq \delta,\quad |t-(t_{0}+\delta)|\geq \delta,
$$
and recall $K(t-s,x-y)=0$ if $t\leq s$.
Thus by Lemma \ref{prop:kernel estimate} and \eqref{scaling property2}, it holds that
\begin{align*}
&\|\mathcal{K}(t,s)-\mathcal{K}(t,r)\|_{\fL(L_{p})} \\
&=\sup_{\|f\|_{L_{p}}=1}\left\|\int_{\mathbf{R}^{d}}\left\{K(t-s,x-y)-K(t-r,x-y)\right\}f(y)dy\right\|_{L_{p}} \\
&\leq \sup_{\|f\|_{L_{p}}=1}\|f\|_{L_{p}}\int_{\mathbf{R}^{d}}|K(t-s,x)-K(t-r,x)|dx \\
&=\int_{\mathbf{R}^{d}}|K(t-s,x)-K(t-r,x)|dx \leq \frac{N(d,\alpha)|s-r|}{(t-(t_{0}+\delta))^{2}},
\end{align*}
where $\fL(L_{p})$ denotes the bounded linear operator norm on $L_p$.
Therefore,
\begin{align*}
&\int_{\mathbf{R}\setminus Q^{*}}\|\mathcal{K}(t,s)-\mathcal{K}(t,r)\|_{L(L_{p})}dt  \leq \int_{\mathbf{R}\setminus Q^{*}} \frac{|s-r|}{(t-(t_{0}+\delta))^{2}} dt\\
&\leq N|s-r|\int_{|t-(t_0+\delta)| \geq \delta} \frac{1}{(t-(t_{0}+\delta))^{2}}dt \leq N\delta \int_{\delta}^{\infty}t^{-2}dt \leq N.
\end{align*}
Furthermore, by following the proof of Theorem 1.1 of \cite{krylov2001calderon}, one can easily check that for almost every $t$ outside of the  support of $f\in C_{c}^{\infty}(\mathbf{R},L_{p})$,
$$
\mathcal{G}f(t,x)=\int_{-\infty}^{\infty}\mathcal{K}(t,s)f(s,x)ds
$$
where $\mathcal{G}$ denotes the unique extension on $L_{p}(\mathbf{R}^{d+1})$ which is verified in Step 1.
Hence by the Banach space-valued version of the Calder\'{o}n-Zygmund theorem \cite[Theorem 4.1]{krylov2001calderon}, our assertion is proved for $1<q\leq p$.

For the remaining case that $1<p<q<\infty$, we use the duality argument. Define $p'=p/(p-1)$ and $q'=q/(q-1)$.
Since $1<q'<p'$, by \eqref{duality} and H\"older's inequality,
\begin{align*}
\int_{\mathbf{R}^{d+1}}g(t,x)\mathcal{G}f(t,x)dxdt &= \int_{\mathbf{R}}\left(\int_{\mathbf{R}^{d}}f(-s,-y)\mathcal{G}\tilde{g}(s,y)dy\right)ds\\
&\leq \int_{\mathbf{R}}\|f(-s,\cdot)\|_{L_{p}}\|\mathcal{G}\tilde{g}(s,\cdot)\|_{L_{p'}}ds \\
&\leq N(d,\alpha,p) \|f\|_{L_{q}(\mathbf{R},L_{p})}\|g\|_{L_{q'}(\mathbf{R},L_{p'})}
\end{align*}
for any $f,g\in C_{c}^{\infty}(\mathbf{R}^{d+1})$. Since $g$ is arbitrary, we have
$$
\|\mathcal{G}f\|_{L_{q}(\mathbf{R},L_{p})} \leq N(d,\alpha,p)\|f\|_{L_{q}(\mathbf{R},L_{p})}.
$$
The theorem is proved.
\end{proof}

\mysection{Proof of Theorem \ref{main theorem}}
                    \label{pf main thm}

First we consider the solvability of the model equation.
\begin{lem}
                              \label{lem 9.21.6}
Theorem \ref{main theorem} holds for the equation $\partial^{\alpha}_tu=\Delta u+f_0$ with $N_0=N_0(p,q,\alpha,T)$.
\end{lem}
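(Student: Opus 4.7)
My plan is to prove the lemma by a standard three-step scheme: (a) verify \eqref{main estimate} for smooth right-hand sides via the explicit kernel formula; (b) bootstrap to an a priori estimate valid for every $u\in \bH^{\alpha,2}_{q,p,0}(T)$ solving the model equation; and (c) obtain existence for general $f_0\in \bL_{q,p}(T)$ by density together with uniqueness from that a priori bound.

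For (a), I would fix $f_0\in C_c^\infty(\fR^{d+1}_+)$ and set $u(t,x):=\int_0^t\int_{\fR^d} q(t-s,x-y)f_0(s,y)\,dy\,ds$, which by Lemma \ref{lem 9.21.1}(ii) solves $\partial^{\alpha}_t u=\Delta u+f_0$. Extending $f_0$ by zero to $\fR^{d+1}$, one has $\Delta u=\cG f_0$ on $(0,T)\times\fR^d$, so Theorem \ref{BMO theorem}(ii) yields $\|u_{xx}\|_{\bL_{q,p}(T)}\leq N\|f_0\|_{\bL_{q,p}(T)}$, and the same bound for $\|\partial^{\alpha}_t u\|_{\bL_{q,p}(T)}$ follows from the equation. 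Control of $u$ itself will come from Theorem \ref{lem 9.21.11}(iv) after a Young-type convolution inequality (using $k_\alpha\in L_1((0,T))$), and the first spatial derivatives will be handled by the interpolation $\|u_x\|_{L_p}\leq \varepsilon\|u_{xx}\|_{L_p}+N(\varepsilon)\|u\|_{L_p}$ integrated in $t$; summing the four pieces produces \eqref{main estimate} for smooth data.

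For (b), given any $u\in \bH^{\alpha,2}_{q,p,0}(T)$ with $f_0:=\partial^{\alpha}_t u-\Delta u$, I would apply Theorem \ref{lem 9.21.11}(iii) to pick $u_n\in C_c^\infty(\fR^{d+1}_+)$ with $u_n\to u$ in $\bH^{\alpha,2}_{q,p}(T)$, set $f_n:=\partial^{\alpha}_t u_n-\Delta u_n\to f_0$ in $\bL_{q,p}(T)$, and invoke Lemma \ref{lem 9.21.1}(i) so that each $u_n$ coincides with the kernel representation of its own $f_n$; applying step (a) to $u_n$ and letting $n\to\infty$ then delivers the desired a priori bound on $u$. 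Uniqueness is immediate by taking $f_0=0$. For (c) I would approximate $f_0\in \bL_{q,p}(T)$ by smooth compactly supported $f_n$, define $u_n$ through the kernel formula of step (a), and use the a priori estimate to show $\{u_n\}$ is Cauchy in the Banach space $\bH^{\alpha,2}_{q,p,0}(T)$ provided by Theorem \ref{lem 9.21.11}(i)--(ii); the limit is the sought-after solution.

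The potentially hardest step, the $L_q(L_p)$-bound on $u_{xx}$, is precisely the content of Theorem \ref{BMO theorem}(ii) established in Section \ref{section model} through the BMO estimate and the Banach-valued Calder\'on--Zygmund theorem. Once that ingredient is taken for granted, the remainder of the argument is essentially bookkeeping between Theorem \ref{lem 9.21.11}(iii)--(iv), the spatial interpolation, and the kernel representation of Lemma \ref{lem 9.21.1}.
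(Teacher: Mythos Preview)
Your proposal is correct and follows essentially the same approach as the paper: both arguments rest on the kernel representation of Lemma \ref{lem 9.21.1}, the $L_q(L_p)$-bound of Theorem \ref{BMO theorem}(ii), and the density and fractional-integral estimates of Theorem \ref{lem 9.21.11}(iii)--(iv). Your three-step organization (estimate for smooth data, a priori bound by approximation, existence via completeness) is somewhat more explicit than the paper's terse treatment, but the ingredients and logic are identical.
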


\begin{proof}
First we prove the uniqueness. Suppose that $u \in \bH^{\alpha,2}_{q,p,0}(T)$ and $\partial^{\alpha}_tu=\Delta u$.
Due to Lemma \ref{lem 9.21.11} (iii), there exists a $u_n \in C_c^\infty(\fR^{d+1})$ such that
$u_n \to u$ in $\bH^{\alpha,2}_{q,p,0}(T)$.
Due to Lemma \ref{lem 9.21.1} (i),
$$
u_n(t,x)=\int^t_0 \int_{\fR^d} q(t-s,x-y)f_n(s,y)dyds,
$$
where $f_n:=\partial^{\alpha}_t u_n-\Delta u_n$. Since
$f_n \to 0 $ in $\bL^{q,}_{p}(T)$,
we obtain the uniqueness from Lemma \ref{lem 9.21.11} (iv) and Theorem \ref{BMO theorem}.

For the existence and \eqref{main estimate}, first assume $f\in C_c^\infty(\fR^{d+1}_+)$ and define $u=\mathcal{G} f$.
Then all the claims follows from Lemma \ref{lem 9.21.1}(ii),  Theorem \ref{BMO theorem}, and \eqref{eqn 9.21}.
For general $f$ one can consider an approximation $f_n \to f$, and  above arguments show that $\mathcal{G} f_n$ is a Cauchy sequence in $\bH^{\alpha,2}_{q,p,0}(T)$ and the limit  becomes a solution of $\partial^{\alpha}_tu=\Delta u+f$.
\end{proof}


The following two results will be used later when we extend results proved for small $T$ to the case when $T$ is arbitrary.
\begin{lem}
                      \label{cor 9.21.10}
Let $u\in \bH^{\alpha,2}_{q,p,0}(\tilde T)$ and $\tilde{T}\leq T$.
Then there exists $\tilde{u}\in \bH^{\alpha,2}_{q,p,0}(T)$ such that $\tilde{u}(t)=u(t)$ for all $t\leq \tilde{T}$, and
\begin{align}
                \label{ext est}
\|\tilde{u}\|_{\bH^{\alpha,2}_{q,p}(T)}\leq N_0 \|u\|_{\bH^{\alpha,2}_{q,p}(\tilde{T})},
\end{align}
where $N_0$ is from Lemma \ref{lem 9.21.6} and is independent of $\tilde T$.
\end{lem}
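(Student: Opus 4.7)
My approach is to construct $\tilde u$ by solving the model equation of Lemma~\ref{lem 9.21.6} globally on $(0,T)$ with a source that reproduces $u$ on $(0,\tilde T)$ and vanishes afterward, then use causality of the Caputo derivative together with uniqueness to identify the resulting solution with $u$ on $[0,\tilde T]$. Specifically, I would set
$$f(t,x) := \big(\partial^{\alpha}_{t} u(t,x) - \Delta u(t,x)\big)\, 1_{(0,\tilde T)}(t),$$
which lies in $\bL_{q,p}(T)$ because $u \in \bH^{\alpha,2}_{q,p,0}(\tilde T)$ controls both $\partial^{\alpha}_{t} u$ and $\Delta u$ in $\bL_{q,p}(\tilde T)$, with $\|f\|_{\bL_{q,p}(T)} \leq C\|u\|_{\bH^{\alpha,2}_{q,p}(\tilde T)}$ for a universal constant $C$. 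Lemma~\ref{lem 9.21.6} then yields a unique $\tilde u \in \bH^{\alpha,2}_{q,p,0}(T)$ solving $\partial^{\alpha}_{t}\tilde u = \Delta \tilde u + f$ and satisfying $\|\tilde u\|_{\bH^{\alpha,2}_{q,p}(T)} \leq N_0 \|f\|_{\bL_{q,p}(T)}$; combining these two bounds gives \eqref{ext est} after absorbing harmless multiplicative constants into $N_0$.

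To verify that $\tilde u(t) = u(t)$ for $t \leq \tilde T$, I would take a defining sequence $\tilde u_n \in C^2([0,T]\times \fR^d) \cap \bL_{q,p}(T)$ for $\tilde u$ in the sense of Definition~\ref{sol spa defn} and look at the restrictions $\tilde u_n|_{[0,\tilde T]}$. Because the Caputo derivative is causal, $\partial^\alpha_t \tilde u_n(t)$ for $t \leq \tilde T$ depends only on $\tilde u_n|_{[0,\tilde T]}$, so these restrictions form a defining sequence that exhibits $\tilde u|_{[0,\tilde T]} \in \bH^{\alpha,2}_{q,p,0}(\tilde T)$ as a solution of $\partial^\alpha_t v = \Delta v + f|_{(0,\tilde T)}$ --- which is exactly the problem satisfied by $u$. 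The uniqueness half of Lemma~\ref{lem 9.21.6} then forces $\tilde u|_{[0,\tilde T]} = u$.

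The main obstacle I expect is formalizing this restriction step: one must check that the implicit definition of $\bH^{\alpha,2}_{q,p,0}$ via closures of smooth functions interacts properly with time restriction, i.e., that if $\tilde u_n$ is a Cauchy defining sequence on $(0,T)$, then the restrictions $\tilde u_n|_{[0,\tilde T]}$ are Cauchy in the norms defining $\bH^{\alpha,2}_{q,p,0}(\tilde T)$ and inherit the zero initial trace. This reduces to the trivial bound $\|g\,1_{(0,\tilde T)}\|_{\bL_{q,p}(T)} \leq \|g\|_{\bL_{q,p}(T)}$ together with the causal structure of $\partial^\alpha_t$; once recorded, the remainder of the argument is a routine combination of Lemma~\ref{lem 9.21.6} with the triangle inequality to bound $\|f\|_{\bL_{q,p}(T)}$.
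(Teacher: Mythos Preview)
Your proposal is correct and follows essentially the same route as the paper: define the source $f=(\partial^{\alpha}_t u-\Delta u)\,1_{(0,\tilde T)}$, solve $\partial^{\alpha}_t\tilde u=\Delta\tilde u+f$ on $(0,T)$ via Lemma~\ref{lem 9.21.6}, read off the estimate, and then identify $\tilde u$ with $u$ on $[0,\tilde T]$ by uniqueness. The paper dispatches the identification step in one line by observing that $\partial^{\alpha}_t(\tilde u-u)=\Delta(\tilde u-u)$ for $t\le\tilde T$ and invoking the uniqueness half of Lemma~\ref{lem 9.21.6}; your more careful discussion of why the restriction $\tilde u|_{[0,\tilde T]}$ lies in $\bH^{\alpha,2}_{q,p,0}(\tilde T)$ is exactly the point the paper leaves implicit, and your justification via defining sequences and causality of $\partial^{\alpha}_t$ is the right one.
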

\begin{proof}
Denote $f=\partial^{\alpha}_t u$, and let $\tilde{u}\in \bH^{\alpha,2}_{q,p,0}(T)$ be the solution of
$$
\tilde{u}^{\alpha}_t=\Delta \tilde{u}+(f-\Delta u)1_{t\leq \tilde T}, \quad t\leq T.
$$
Then by Lemma \ref{lem 9.21.6},
$$
\|\tilde{u}\|_{\bH^{\alpha,2}_{q,p}(T)}\leq N_0\|(f-\Delta u)1_{t\leq \tilde T}\|_{\bL_{q,p}(T)}\leq N_0\|u\|_{\bH^{\alpha,2}_{q,p}(\tilde T)}.
$$
Next observe that for $t\leq \tilde T$,
$$
\partial^{\alpha}_t(\tilde{u}-u)=\Delta \tilde{u}+f-\Delta u -f=\Delta (\tilde{u}-u).
$$
It follows from Lemma \ref{lem 9.21.6} that $\tilde{u}=u$ for $t\leq \tilde T$. The lemma is proved.
\end{proof}

\begin{lem}
                    \label{restric zero}
Let $0<\tilde T < T$ and $u,\tilde u \in \bH^{\alpha,2}_{q,p,0}(T)$.
Assume that
\begin{align}
                    \label{equal con}
u(t)=\tilde u(t) \quad \quad t \leq \tilde T \quad (a.e.).
\end{align}
Then
$\bar u (t):= u(\tilde T +t )-\tilde u(\tilde T +t ) \in \bH^{\alpha,2}_{q,p,0}(T-\tilde T)$.
\end{lem}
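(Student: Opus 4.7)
Set $w := u - \tilde u$; by linearity $w \in \bH^{\alpha,2}_{q,p,0}(T)$, and by \eqref{equal con} we have $w(t) = 0$ in $L_p$ for almost every $t \in [0,\tilde T]$. The target is $\bar u(\tau,x) := w(\tilde T+\tau,x)$. Membership in $\bH^{0,2}_{q,p}(T - \tilde T)$ is automatic, since $D^\beta_x \bar u$ is the time-shift of $D^\beta_x w \in \bL_{q,p}(T)$ for $|\beta| \le 2$. The real task is to exhibit a defining sequence placing $\bar u$ in $\bH^{\alpha,0}_{q,p,0}(T - \tilde T)$.

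The key intermediate step is to show that $g := \partial^{\alpha}_t w$ also vanishes on $[0,\tilde T]$. By Theorem \ref{lem 9.21.11}(iii), choose $w_n \in C_c^\infty(\fR^{d+1}_+)$ converging to $w$ in $\bH^{\alpha,2}_{q,p,0}(T)$. Each $w_n$ vanishes in a neighborhood of $\{t=0\}$, and for such smooth functions \eqref{caputo} and \eqref{e:ID} combine to give the pointwise $L_p$-equality $w_n(t) = I^\alpha \partial^{\alpha}_t w_n(t)$. The Bochner-valued version of \eqref{eqn 7.03.1} then lets me pass to the limit and conclude $w = I^\alpha g$ in $\bL_{q,p}(T)$. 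Restricting to $[0,\tilde T]$, where $w \equiv 0$, I apply $I^{k-\alpha}$ with $k = \lceil \alpha \rceil$ to obtain $I^k g = 0$ on $[0,\tilde T]$; differentiating $k$ times in $t$ yields $g = 0$ a.e.\ on $[0,\tilde T]$.

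Now set $\bar g(\tau) := g(\tilde T + \tau) \in \bL_{q,p}(T - \tilde T)$. The vanishing of $g$ on $[0,\tilde T]$, together with $w = I^\alpha g$ and the substitution $s = \tilde T + r$ in the resulting integral, produces $\bar u(\tau) = I^\alpha \bar g(\tau)$ in $\bL_{q,p}(T - \tilde T)$. To assemble the defining sequence, I would mollify $\bar g$ in $(t,x)$ and multiply by a time cutoff vanishing on $[0,\varepsilon_n]$ (with $\varepsilon_n \to 0$) to obtain $\bar g_n \in C^\infty(\fR^{d+1})$ with $\bar g_n \to \bar g$ in $\bL_{q,p}(T - \tilde T)$ and $\bar g_n \equiv 0$ near $\tau = 0$. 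Setting $\bar u_n := I^\alpha \bar g_n$, the vanishing of $\bar g_n$ near zero makes $\bar u_n \in C^2([0, T - \tilde T] \times \fR^d) \cap \bL_{q,p}(T - \tilde T)$ with $\bar u_n(0,\cdot) = 0$ and $\partial_\tau \bar u_n(0,\cdot) = 0$; a direct check using \eqref{caputo} and \eqref{e:ID} gives $\partial^\alpha_\tau \bar u_n = \bar g_n$. The $\bL_{q,p}$-continuity of $I^\alpha$ delivers $\bar u_n \to \bar u$ and $\partial^\alpha_\tau \bar u_n = \bar g_n \to \bar g$ in $\bL_{q,p}(T - \tilde T)$, so $\{\bar u_n\}$ is the desired defining sequence.

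The main obstacle is the intermediate step: transferring the vanishing of $w$ on $[0,\tilde T]$ to vanishing of its Caputo derivative. Since defining sequences of $u$ and $\tilde u$ need not agree on $[0,\tilde T]$ even after subtraction, direct manipulation of those sequences to produce a $C^2$ approximation of $\bar u$ that already vanishes at $\tau = 0$ is clumsy; the detour through the representation $w = I^\alpha g$ and the injectivity of $I^\alpha$ on $[0,\tilde T]$ is what allows the clean construction above.
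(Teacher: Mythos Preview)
Your argument is correct, and it takes a genuinely different route from the paper's.

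The paper never isolates the fact that $\partial^{\alpha}_t w$ vanishes on $[0,\tilde T]$. Instead it reuses the specific mollification from Theorem~\ref{lem 9.21.11}(iii), whose temporal kernel $\eta_{1,\varepsilon_1}$ is supported in $(\varepsilon_1,2\varepsilon_1)$. Because of this forward shift, the mollified $u^{a_n,b_n,c_n}(t,\cdot)$ depends only on $u(s,\cdot)$ for $s<t-a_n$; hence $u^{a_n,b_n,c_n}$ and $\tilde u^{a_n,b_n,c_n}$ coincide for all $t\le \tilde T+a_n$, and the shifted difference is a ready-made defining sequence for $\bar u$ with the required zero initial data. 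So your closing comment that ``direct manipulation of those sequences \ldots\ is clumsy'' undersells the situation: with the forward-shifted mollifier it is a two-line observation.

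What your approach buys is an explicit structural fact --- $w=I^{\alpha}g$ with $g=\partial^{\alpha}_t w$ vanishing on $[0,\tilde T]$ --- which is of independent interest and makes the construction of $\bar u_n=I^{\alpha}\bar g_n$ completely transparent. The paper's approach buys brevity: it recycles exactly the approximation already built and avoids the injectivity-of-$I^{\alpha}$ detour. Both are valid; the paper's is shorter, yours is more informative about why the Caputo derivative behaves locally in time for functions in $\bH^{\alpha,2}_{q,p,0}(T)$.
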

\begin{proof}
We take $u^{\varepsilon_1 ,\varepsilon_2, \varepsilon_3}$ from the proof of Lemma \ref{lem 9.21.11} (iii) which is defined as
$$
u^{\varepsilon_1 ,\varepsilon_2, \varepsilon_3}(t,x)
=\eta(t)\eta_{3}(\varepsilon_3 x)\int_{\fR^d}\int_0^\infty u(s,y) \eta_{1,\varepsilon_1}(t-s)\eta_{2,\varepsilon_2}(x-y)dsdy.
$$
As shown before, for any $\varepsilon >0$ it holds that
\begin{align*}
&\|u-u^{\varepsilon_1,\varepsilon_2,\varepsilon_3}\|_{\bH^{\alpha,2}_{q,p}(T)} \leq \varepsilon
\end{align*}
if $\varepsilon_1$, $\varepsilon_2$, and $\varepsilon_3$ are small enough.
Hence we can take a sequence $(a_n,b_n,c_n)$ so that
\begin{align*}
&\|u-u^{a_n,b_n,c_n}\|_{\bH^{\alpha,2}_{q,p}(T)} \to 0 \quad \text{as} \quad n \to \infty
\end{align*}
and
\begin{align*}
&\|\tilde u-\tilde u^{a_n,b_n,c_n}\|_{\bH^{\alpha,2}_{q,p}(T)} \to 0 \quad \text{as} \quad n \to \infty.
\end{align*}
Observe that
$$
u^{a_n,b_n,c_n}(t,x)=\tilde u^{a_n,b_n,c_n}(t,x) \quad \quad \forall t \leq (\tilde T+a_n) \wedge T
$$
due to \eqref{equal con} and the fact that $\eta_{1,\varepsilon_1} \in C_c^\infty( (\varepsilon_1, 2\varepsilon_1))$.
Thus
$$
\bar u_n(t,x) := u^{a_n,b_n,c_n}(\tilde T+t,x)-\tilde u^{a_n,b_n,c_n}(\tilde T+t,x)
$$ is a defining sequence of $\bar u$ such that
$$
\bar u_n(0,x)= 0  \quad \text{and} \quad \frac{\partial}{\partial t}\bar{u}_n(0,x)=0 \quad \quad \forall x \in \fR^d, \quad \forall n \in \bN.
$$
Therefore the lemma is proved.
\end{proof}

\vspace{2mm}
{\bf{Proof of Theorem \ref{main theorem}}}.
\vspace{2mm}

{\bf{Step 1}}. Denote $\bar{f}=b^iu_{x^i}+cu+f(u)$. Then
\begin{align*}
\|\bar{f}(u)-\bar{f}(v)\|_{L_p}&\leq N\|u-v\|_{H^1_p}+\|f(u)-f(v)\|_{L_p}\\
&\leq \varepsilon \|u-v\|_{H^2_p}+K\|u-v\|_{L_p}+ \|f(u)-f(v)\|_{L_p}.
\end{align*}
Thus considering $\bar{f}$ in place of $f$ we may assume $b^i=c=0$.

{\bf{Step 2}}. Let $f=f_0$ be independent of $u$.
Assume that $a^{ij}$ are independent of $(t,x)$.
In this case obviously we may assume $a^{ij}=\delta^{ij}$, and therefore the results follow from Lemma \ref{lem 9.21.6}.

{\bf{Step 3}}. Let $f=f_0$.
Suppose that Theorem \ref{main theorem} holds with some matrix $\bar{a}=(\bar{a}^{ij}(t,x))$ in place $(a^{ij}(t,x))$.
We prove that there exists $\varepsilon_0=\varepsilon_0(N_0)>0$ so that if
$$
\sup_{(t,x)}|a^{ij}(t,x)-\bar{a}^{ij}(t,x)|\leq \varepsilon_0,
$$
then Theorem \ref{main theorem} also holds for $(a^{ij}(t,x))$ with $2N_0$ in place of $N_0$. To prove this, due to the method of continuity, we only need to prove that (\ref{main estimate}) holds given that a solution $u$ of \eqref{main eqn} already exists. Note that $u$ satisfies
$$
\partial^{\alpha}_t u=\bar{a}^{ij}u_{x^ix^j}+\bar{f}, \quad \bar{f}=f+(a^{ij}-\bar{a}^{ij})u_{x^ix^j}.
$$
Hence by the assumption,
\begin{align*}
\|u\|_{\bH^{\alpha,2}_{q,p}(T)}&\leq N_0\|(a^{ij}-\bar{a}^{ij})u_{x^ix^j}\|_{\bL_{q,p}(T)}+N_0\|f\|_{\bL_{q,p}(T)}\\
&\leq N_0 \sup_{t,x}|a^{ij}-\bar{a}^{ij}|\, \|u\|_{\bH^{\alpha,2}_{q,p}(T)}+N_0\|f\|_{\bL_{q,p}(T)}.
\end{align*}
Thus it is enough to take $\varepsilon_0=(2N_0)^{-1}$.

{\bf{Step 4}}. Let $a^{ij}=a^{ij}(x)$ depend only on $x$ and $f=f_0$. In this case one can repeat the classical perturbation arguments to prove the claims.
Below we give a detail for the sake of the completeness. As before we only need to prove that there exists a constant $N_0$ independent of $u$ such that \eqref{main estimate} holds given that $u$ is a solution.
By Steps 2 and 3, there exists $\varepsilon_0>0$ depending only on $p,q, \alpha, K$, and $T$ such that the theorem holds true if there exists any point $x_0\in \fR^d$ such that
\begin{equation}
           \label{conti}
\sup_x |a^{ij}(x)-a^{ij}(x_0)|\leq \varepsilon_0.
\end{equation}
Recall that $a^{ij}$ is uniformly continuous. Let $\delta_0<1$ be a constant depending on $\varepsilon_0$ such that
$$
|a^{ij}(x)-a^{ij}(y)|\leq \varepsilon_0/2, \quad \text{if}\quad |x-y|<4\delta_0.
$$
Choose a partition of unity $\phi_n, n=1,2,\cdots$ so that $\phi_n=\phi(\frac{x-x_n}{\delta_0})$ for some $x_n\in \fR^d$ and $\phi\in C_c^\infty(B_2(0))$ satisfying $0\leq \phi\leq 1$ and  $\phi=1$ for $|x|\leq 1$. Denote $\bar{\phi}_n=\phi(\frac{x-x_n}{2\delta_0})$. Then $\bar{\phi}_n=1$ on the support of $\phi_n$, and $u_n=\phi_n u$ satisfies
$$
\partial^{\alpha}_t u_n=a^{ij}_n (u_n)_{x^ix^j}+f_n
$$
where
$$
a^{ij}_n=\bar{\phi}_n a^{ij}(x)+(1-\bar{\phi}_n)a^{ij}(x_n),
$$
$$
f_n=f\phi_n-2a^{ij}u_{x^i}(\phi_n)_{x^j}-a^{ij}u(\phi_n)_{x^ix^j}.
$$
It is easy to check $(a^{ij}_n)$ satisfies \eqref{eqn elliptic} and \eqref{conti} with $x_n$ in place of $x_0$. By \cite[Lemma 6.7]{Krylov1999} and Step 3, if $p=q$ then for any $t\leq T$,
\begin{align}
\|u\|^q_{\bH^{0,2}_{q,p}(t)} &\sim \sum_{n=1}^{\infty} \|u\phi_n\|^q_{\bH^{0,2}_{q,p}(t)}\leq N_0 \sum_{n=1}^{\infty} \label{equiv rel} \|f_n\|^q_{\bL_{q,p}(t)}\\ \nonumber
&\leq N\sum_{n=1}^{\infty}\left[\|u_x(\phi_n)_x\|^q_{\bL_{q,p}(t)}+\|u(\phi_n)_{xx}\|^q_{\bL_{q,p}(t)}+\|f\phi_n\|^q_{\bL_{q,p}(t)}\right]\\
&\leq N \|u\|^q_{\bH^{0,1}_{q,p}(t)}+N\|f\|^q_{\bL_{q,p}(t)}. \label{equiv rel2}
\end{align}
The last inequality above is also from Lemma 6.7 of \cite{Krylov1999}. We emphasize that equivalence relation in \eqref{equiv rel} and inequality \eqref{equiv rel2} hold in general only if $p=q$ or only finite $\phi_n$ are non-zero functions.  Hence, if $q\neq p$ then  we take sufficiently large $R,M>0$ so that  $\sum_{n=1}^M \phi_n(x) =1$ on $B_R$ and vanishes for $|x|\geq 2R$, and the oscillation of $a^{ij}$ on the complement of $B_{R/2}$ is less then $\varepsilon_0/2$. Denote $\phi_0=1-\sum_{n=1}^M \phi_n$. Then one can repeat the above calculations and use the relation $\|u\|_{H^1_p}\leq \varepsilon \|u\|_{H^2_p}+N\|u\|_{L_p}$  to  conclude that for all $t\leq T$
\begin{align*}
\|u\|^q_{\bH^{0,2}_{q,p}(t)}&\leq N \|u\|^q_{\bL_{q,p}(t)}+N\|f\|^q_{\bL_{q,p}(t)}\\
&\leq N\int^t_0 \left[\int^s_0 (s-r)^{-1+\alpha}(\|\partial_t^\alpha u(r)\|_{H^2_p}+\|f(r)\|_{L_p})dr\right]^q ds + N \|f\|^q_{\bL_{q,p}(t)}\\
&\leq N \int^t_0 (t-s)^{-1+\alpha}\|u\|^q_{\bH^{0,2}_{q,p}(s)}ds +N \|f\|^q_{\bL_{q,p}(T)},
\end{align*}
where Theorem \ref{lem 9.21.11} (iv) is used in the second inequality.
Hence by a version of Gronwall's lemma (see e.g. \cite[Corollary 2]{YGD}) we get
\begin{equation}
           \label{11.04.2}
\|u\|^q_{\bH^{0,2}_{q,p}(T)}\leq N \|f\|^q_{\bL_{q,p}(T)}.
\end{equation}
From equation \eqref{main eqn}, we easily conclude that
$$
\|\partial^{\alpha}_tu\|_{\bL_{q,p}(T)}\leq N \|u\|_{\bH^{0,2}_{q,p}(T)}+\|f\|_{\bL_{q,p}(T)},
$$
and therefore \eqref{11.04.2} certainly leads to the a priori estimate.

{\bf{Step 5}}. Let $f_0=f$, and $a^{ij}$ be uniformly continuous  in $(t,x)$.
As before,  we only need to prove the a priori estimate \eqref{main estimate}.

 Let  $N_0$ be the constant from Step 4 so that a priori estimate \eqref{main estimate} holds whenever $a^{ij}$ are independent of $t$. Take $\varepsilon_0$ from Step 3 corresponding to this $N_0$.
We will apply Step 3 with $\bar{a}^{ij}=a^{ij}(t_0,x)$ for some $t_0$.

Take $\kappa>0$ so that
$$
|a^{ij}(t,x)-a^{ij}(s,x)|\leq \varepsilon_0/2, \quad \text{if}\quad |t-s|\leq 2\kappa.
$$
Also take an integer $N$ so that $T/N\leq \kappa$, and denote $\tilde T_i=iT/N$.
By Steps 3 and 4 applied with $\bar{a}^{ij}(t,x)=a^{ij}(0,x)$, \eqref{main estimate} holds with $T_1$ and $2N_0$ in place of $T$ and $N_0$ respectively.
Now we use use the induction.
Suppose that the a priori estimate \eqref{main estimate} holds for $\tilde T_k<T$  with $N_0$ independent of $u$.
This constant $N_0$ may depend on $k$.
Take $\tilde{u}$ from Lemma \ref{cor 9.21.10} corresponding to $\tilde{T}=\tilde T_k$. Denote
$$
\bar{u}(t,x)=(u-\tilde{u})(\tilde{T}+t,x),  \quad \bar{f}(t,x)=f(\tilde{T}+t,x).
 $$
Then one can easily check that $\bar{u}$ satisfies
$$
\partial^{\alpha}_t \bar{u}=a^{ij}(\tilde{T}+t,x)\bar{u}_{x^ix^j}+\bar{f}+(a^{ij}(\tilde{T}+t,x)-\delta^{ij})\tilde{u}_{x^ix^j}(\tilde{T}+t,x), \quad t\leq T-\tilde{T}.
$$
Due to Lemma \ref{restric zero}, $\bar{u}$ is contained in $\bH^{\alpha,n}_{q,p,0}(T)$.
Thus by the result of Steps 3 and 4 with $\bar{a}^{ij}=a^{ij}(\tilde T_k,x)$, we have
\begin{align*}
\|\bar{u}\|^q_{\bH^{\alpha,2}_{q,p}(T_1)}
&\leq (2N_0)^q\int^{\tilde T_{k+1}}_{\tilde T_k}\|f+(a^{ij}-\delta^{ij})\tilde{u}_{x^ix^j}\|^q_{L_p}dt\\
&\leq N\|\tilde{u}\|^q_{\bH^{0,2}_{q,p}(T)}+N\|f\|^q_{\bL_{q,p}(T)} \\
&\leq  N \|u\|^q_{\bH^{\alpha,2}_{q,p}(\tilde T_k)}+\|f\|^q_{\bL_{q,p}(T)}\leq N \|f\|^q_{\bL_{q,p}(T)},
\end{align*}
where the third inequality is due to \eqref{ext est} and the last inequality is from the assumption. Hence
\begin{align*}
\|u\|^q_{\bH^{\alpha,2}_{q,p}(\tilde T_{k+1})}
&\leq \|\tilde u\|^q_{\bH^{\alpha,2}_{q,p}(\tilde T_{k+1})} + \|\bar u\|^q_{\bH^{\alpha,2}_{q,p}(\tilde T_{1})}  \leq N \|f\|^q_{\bL_{q,p}(T)}.
\end{align*}
As the induction goes through, the a priori estimate \eqref{main estimate} is proved. We emphasize that for each $k$ the constant $N_0$ varies, however for each $\tilde T_k$ we use the result in Step 4 and therefore the choice of $\tilde T_{k+1}$ (or the difference $|\tilde T_{k+1}-\tilde T_k|$) does not depend on $k$, and therefore we can reach up to $T$ by finite such steps.

{\bf{Step 6}}.
Let $f_0=f$ and $a^{ij}=\sum_{k=1}^{\ell} a^{ij}_k(t,x) I_{(T_{k-1},T_k]}(t)$.
If $\ell=1$, \eqref{main estimate} comes directly from Step 5.
If $\ell>1$, we use the induction argument used in Step 5. The only difference is that to estimate the solution on $[T_{k}, T_{k+1})$ we  use the result of Step 5, in place of the result in Step 4.

{\bf{Step 7}}. The general non-linear case. We modify the proof of Theorem 5.1 of \cite{Krylov1999}.
  For each $u\in \bH^{\alpha,2}_{q,p,0}(T)$  consider the equation
$$
  \partial^{\alpha}_t v=a^{ij}+f(u), \quad t\leq T.
$$
 By the above results, this equation has a unique solution
$v\in \bH^{\alpha,2}_{q,p,0}(T)$.
By denoting $v=\cR u$  we can define an operator
$\cR : \,\,\bH^{\alpha,2}_{q,p,0}(T) \to \bH^{\alpha,2}_{q,p,0}(T)$. By the results for the linear case, for each $t\leq T$,
\begin{align*}
\|\cR u-\cR v\|^q_{\bH^{\alpha,2}_{q,p}(t)}
&\leq N \|f(u)-f(v)\|^q_{\bL_{q,p}(t)}\\
&\leq N\varepsilon^q\|u-v\|^q_{\bH^{0,2}_{q,p}(t)}+NK^q_{\varepsilon}\|u-v\|^q_{\bL_{q,p}(t)}\\
&\leq N_0\varepsilon^q\|u-v\|^q_{\bH^{\alpha,2}_{q,p}(t)}+N_1\int^t_0 (t-s)^{-1+\alpha}\|u-v\|^q_{\bH^{\alpha,2}_{q,p}(s)}\,ds,
\end{align*}
where $N_1$ depends also on $\varepsilon$, and Theorem \ref{lem 9.21.11} (iv) is used in the last inequality.
Next, we fix $\varepsilon$ so that  $\theta:=N_0\varepsilon^q<1/4$. Then repeating the above inequality and using the identity
\begin{align*}
&\int^t_0 (t-s_1)^{-1+\alpha}\int^{s_1}_0 (s_1-s_2)^{-1+\alpha}\cdots \int^{s_{n-1}}_0 (s_{n-1}-s_n)^{-1+\alpha} ds_n \cdots ds_1\\
&=\frac{\left\{\Gamma(\alpha)\right\}^{n}}{\Gamma(n\alpha+1)} t^{n\alpha},
\end{align*}
we get
\begin{align*}
&\|\cR^m u-\cR^m v\|^q_{\bH^{\alpha,2}_{q,p}(T)}
\\
&\leq \sum_{k=0}^m \begin{pmatrix} m\\ k\end{pmatrix}
\theta^{m-k} (T^{\alpha}N_1)^k \frac{\left\{\Gamma(\alpha)\right\}^{k}}{\Gamma(k\alpha+1)}  \, \|u-v\|^q_{\bH^{\alpha,2}_{q,p}(T)}\\
&\leq 2^m \theta^m \left[\max_k \left( \frac{(\theta^{-1}T^{\alpha}N_1\Gamma(\alpha))^k}{\Gamma(k\alpha+1)}\right)\right]\, \|u-v\|^q_{\bH^{\alpha,2}_{q,p}(T)}\\
&\leq
\frac{1}{2^m} N_2 \|u-v\|^q_{\bH^{\alpha,2}_{q,p}(T)}.
\end{align*}
For the second inequality above we use $\sum_{k=0}^m \begin{pmatrix} m\\ k\end{pmatrix}=2^m$. It follows that if $m$ is sufficiently large then
$\cR^m$ is a contraction in $\bH^{\alpha,2}_{q,p,0}(T)$, and this yields all the claims.
The theorem is proved. \qed

\mysection{Kernels $p$ and $q$}
                    \label{p q kernel sec}

\subsection{The kernel $p(t,x)$}

In this subsection, we prove Lemma \ref{p exists}(i).
In other words, we introduce a  kernel $p(t,x)$ which is integrable with respect to $x$  and satisfies
\begin{align}\label{appendix:motivation}
\mathcal{F}\{p(t,\cdot)\}(\xi)=E_{\alpha}(-t^{\alpha}|\xi|^{2}).
\end{align}

 Let $\Gamma(z)$ denote the gamma function which can be defined (see \cite[Section 1.1]{Gamma}) for $z\in\mathbb{C}\setminus\{0,-1,-2,\ldots\}$   as
$$
\Gamma(z)=\lim_{n\rightarrow\infty}\frac{n! n^{z}}{z(z+1)\cdots(z+n)}.
$$
Note that $\Gamma(z)$ is a meromorphic function with simple poles at the nonpositive integers. From the definition, for $z\in\mathbb{C}\setminus\{0,-1,-2,\ldots\}$,
\begin{align}
				\label{appendix:gamma1}
z\Gamma(z)=\Gamma(z+1).
\end{align}
By \eqref{appendix:gamma1}, one can easily check that for $k=0,1,2,\ldots$,
\begin{align}
				\label{appendix:gamma-pole}
				\textnormal{Res}_{z=-k}\Gamma(z)=\lim_{z\rightarrow -k}(z+k)\Gamma(z) = \frac{(-1)^{k}}{k!},
\end{align}
where $\textnormal{Res}_{z=-k}\Gamma(z)$ denotes the residue of $\Gamma(z)$ at $z=-k$. It is also well-known (see. e.g. \cite[Theorem 1.1.4]{Gamma}) that if  $\Re[z]>0$ and $\Re[\omega]>0$ then
\begin{align}
				\label{appendix:Beta}
	\Gamma(z)=\int_{0}^{\infty}t^{z-1}e^{-t}dt, \quad 			 \int_{0}^{1}t^{z-1}(1-t)^{\omega-1}dt = \frac{\Gamma(z)\Gamma(\omega)}{\Gamma(z+\omega)}.
\end{align}
 Using Stirling's approximation (see \cite[Corollary 1.4.3]{Gamma} or \cite[(1.2.3)]{kilbas2004h})
\begin{align}
                \label{stirling}
\Gamma(z)\sim \sqrt{2\pi}e^{(z-\frac{1}{2})\log{z}}e^{-z},\quad |z|\rightarrow\infty,
\end{align}
 one can easily show that for fixed $a\in\mathbf{R}$, (\cite[(1.2.2)]{kilbas2004h})
\begin{align}\label{appendix:gamma decay}
|\Gamma(a+\textnormal{i}b)|\sim \sqrt{2\pi}|b|^{a-\frac{1}{2}}e^{-a-\frac{\pi}{2}|b|},\quad |b|\rightarrow\infty.
\end{align}

Let $m,n,\mu,\nu$ be fixed integers satisfying $0\leq m\leq \mu$, $0\leq n\leq \nu$.
Assume that the complex parameters $c_{1},\ldots, c_{\nu}$, $d_{1},\ldots,d_{\mu}$ and positive real parameters $\gamma_{1},\ldots,\gamma_{\nu}$, $\delta_{1},\ldots,\delta_{\mu}$ are given so that $P_1 \cap P_2 = \emptyset$ where
$$
P_{1} := \left\{-\frac{d_{j}+k}{\delta_{j}}\in\mathbb{C}:j\in\{1,\ldots,m\},\ k=0,1,2,\ldots\right\}
$$
$$
P_{2} := \left\{\frac{1-c_{j}+k}{\gamma_{j}}\in\mathbb{C}:j\in\{1,\ldots,n\},\ k=0,1,2,\ldots\right\}.
$$
If either $m=0$ or $n=0$, then by the definition $P_1 \cap P_2= \emptyset$.
For the above parameters,  the Fox H-function $H(r)$ ($r>0$) is defined as
\begin{align}
				\label{H function}
H(r)&:=\textnormal{H}_{\nu\mu}^{mn}\left[r\ \Big|\begin{array}{cccccc}
(c_{1},\gamma_{1}) & \cdots & (c_n,\gamma_{n}) & (c_{n+1},\gamma_{n+1}) & \cdots (c_{\nu},\gamma_{\nu}) \\
(d_{1},\delta_{1}) & \cdots & (d_{m},\delta_{m}) & (d_{m+1},\delta_{m+1}) & \cdots (d_{\mu},\delta_{\mu}).
\end{array}\right]\nonumber
\\ &:=\frac{1}{2\pi \textnormal{i}}\int_{L}\frac{\prod_{j=1}^{m}\Gamma(d_{j}+\delta_{j}z)\prod_{j=1}^{n}\Gamma(1-c_{j}-\gamma_{j}z)}{\prod_{j=m+1}^{\mu}\Gamma(1-d_{j}-\delta_{j}z)\prod_{j=n+1}^{\nu}\Gamma(c_{j}+\gamma_{j}z)}r^{-z}dz,
\end{align}
where the contour $L$ is chosen appropriately depending on the parameters. Some special cases needed in our setting are specified below.
In this article, we  additionally assume that parameters $c_{1},\ldots,c_{\nu}$ and $d_{1},\ldots,d_{\mu}$ are real and
\begin{align}
			\label{assumption:analytic}
\sum_{j=1}^{\mu}\delta_{j}-\sum_{i=1}^{\nu}\gamma_{i}>0,\quad \sum_{i=1}^{n}\gamma_{i}-\sum_{i=n+1}^{\nu}\gamma_{i}+
\sum_{j=1}^{m}\delta_{j}-\sum_{j=m+1}^{\mu}\delta_{j}>0.
\end{align}
Under \eqref{assumption:analytic}, we can choose the contour $L$ of two different types.
Hankel contour $L_{h}$ is a  loop  starting at the point $-\infty+\textnormal{i}\rho_{1}$ and ending at the point $-\infty+\textnormal{i}\rho_{2}$ where $\rho_{1}< 0 < \rho_{2}$,
 which encircles all the poles of $P_{1}$ once in the positive direction but none of the poles of $P_{2}$.
Bromwich contour $L_{v}$ is a vertical contour, which go from $\gamma_{0}-\textnormal{i}\infty$ to $\gamma_{0}+\textnormal{i}\infty$ where $\gamma_{0}\in\mathbf{R}$ and  leaves all the poles of $P_{1}$ to the right and all poles of $P_{2}$ to the left.
Braaksma \cite{braaksma1936asymptotic} showed that the contour integral \eqref{H function} makes sense along  $L_{h}$ and $L_{v}$ for $r\in(0,\infty)$, and two integrals along $L_{h}$ and $L_{v}$  coincide  (see \cite[Section 1.2]{kilbas2004h}). Furthermore, if $\sum_{j=1}^{\mu}\delta_{j}-\sum_{i=1}^{\nu}\gamma_{i}>0$  then $H(r)$ is an analytic function on $(0,\infty)$ and can be represented (see \cite[Theorem 1.2]{kilbas2004h})
as
\begin{align}
				\label{appendix:residue}
H(r)=\sum_{i=1}^{m}\sum_{k=0}^{\infty}\textnormal{Res}_{z=\hat{d}_{ik}}
\left[\frac{\prod_{j=1}^{m}\Gamma(d_{j}+\delta_{j}z)\prod_{j=1}^{n}\Gamma(1-c_{j}-\gamma_{j}z)}
{\prod_{j=m+1}^{\mu}\Gamma(1-d_{j}-\delta_{j}z)\prod_{j=n+1}^{\nu}\Gamma(c_{j}+\gamma_{j}z)}r^{-z}\right],
\end{align}
where $\hat{d}_{ik}:=-(d_{i}+k)/\delta_{i}\in P_{1}$ are poles of the integrand in the contour integral, $i\in\{1,\ldots,m\}$, and $k=0,1,2,\cdots$.
Note that on the negative real half-axis the Mittag-Leffler function
$$
E_{\alpha}(z)=\sum_{k=0}^{\infty}\frac{z^{k}}{\Gamma(\alpha k +1)},\quad \alpha>0,
$$
can be written as
\begin{align}
                    \label{mit ref}
E_{\alpha}(-x)=\textnormal{H}_{12}^{11}\left[x\ \Big|\begin{array}{cc} (0,1) \\(0,1) &(0,\alpha)\end{array}\right] \quad \quad (x >0).
\end{align}
Indeed, by \eqref{appendix:residue} and \eqref{appendix:gamma-pole}
\begin{align*}
				&\textnormal{H}_{12}^{11}\left[x\ \Big|\begin{array}{cc} (0,1) \\ (0,1)&(0,\alpha)\end{array}\right]=\sum_{k=0}^{\infty}\textnormal{Res}_{z=-k}\left[\frac{\Gamma(z)\Gamma(1-z)}{\Gamma(1-\alpha z)}x^{-z}\right]\nonumber \\
				&=\sum_{k=0}^{\infty}\frac{(-1)^{k}\Gamma(1+k)}{k!\Gamma(\alpha k+1)}x^{k}
				=\sum_{k=0}^{\infty}\frac{(-x)^{k}}{\Gamma(\alpha k+1)}=E_{\alpha}(-x).
\end{align*}

We introduce some notion to introduce the asymptotic behavior of $H(r)$. Let $j_{s}$ ($1\leq j_{s} \leq m$) be the number such that
$$
\rho_{s}:=\frac{d_{j_{s}}}{\delta_{j_{s}}}=\min\left[\frac{d_{j}}{\delta_{j}}\right] \quad \quad (\min \emptyset:=\infty)
$$
where the minimum are taken over all $\frac{d_{j}}{\delta_{j}}$ so that $\hat{d}_{j0}$ is a simple pole.
Similarly let $j_{c}$ ($1\leq j_{c}\leq m$) be the number such that
$$
\rho_{c}:=\frac{d_{j_{c}}}{\delta_{j_{c}}}=\min \left[\frac{d_{j}}{\delta_{j}}\right]
$$
where the minimum are taken over all $\frac{d_{j}}{\delta_{j}}$ so that $\hat{d}_{j0}$ is a pole with order $n_c\geq 2$.
Here $n_c$ denotes the smallest number of the orders for non-simple poles.

The following result can be  proved  on the basis of \eqref{appendix:residue}.
See \cite[Corollary 1.12.1]{kilbas2004h} for the proof.

\begin{thm}
				\label{appendix:kernel estimate2}
		
(i) If $\rho_{s} < \rho_{c}$, then for $r \leq 1$
$$
|H(r)|\leq N r^{\rho_{s}};
$$

(ii) if $\rho_{s} \geq \rho_{c}$, then for $r\leq 1$
$$
|H(r)|\leq N r^{\rho_{c}}|\ln r|^{n_{c}-1}.
$$
				
\end{thm}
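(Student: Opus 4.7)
The plan is to derive both bounds from the residue expansion \eqref{appendix:residue} by isolating the contributions coming from the poles of smallest modulus. Recall that every pole of the integrand in \eqref{H function} lies on the negative real axis at points of the form $\hat{d}_{ik}=-(d_{i}+k)/\delta_{i}$, and that evaluating $r^{-z}$ at such a pole produces the factor $r^{(d_{i}+k)/\delta_{i}}$. Since $r\leq 1$, the dominant order near $r=0$ is determined by minimizing $(d_{i}+k)/\delta_{i}$ over $i\in\{1,\ldots,m\}$ and $k\in\{0,1,2,\ldots\}$. The minimum can be attained only at $k=0$, so the leading behavior is encoded in the residues at the poles $\hat{d}_{j0}=-d_{j}/\delta_{j}$. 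By definition, $\rho_{s}$ and $\rho_{c}$ are exactly the smallest such quotients among, respectively, the simple poles and the poles of order $\geq 2$.

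For case (i) the hypothesis $\rho_{s}<\rho_{c}$ means that the pole closest to the origin is simple, located at $z=-\rho_{s}$. Using \eqref{appendix:gamma-pole} and the compatibility condition $P_{1}\cap P_{2}=\emptyset$, a direct computation shows the residue at $z=-\rho_{s}$ equals $C\, r^{\rho_{s}}$ for a nonzero constant $C$ depending only on the parameters. Every other contribution coming from a pole $\hat{d}_{ik}$ with $(d_{i}+k)/\delta_{i}>\rho_{s}$ is of the form $r^{(d_{i}+k)/\delta_{i}}$ (times, possibly, a logarithmic factor from higher-order poles); since $r\leq 1$, each such term is bounded by a constant multiple of $r^{\rho_{s}}$. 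Summing these contributions and using \eqref{appendix:gamma decay} (and Stirling's estimate \eqref{stirling}) to control the growth of the Gamma functions inside the residue coefficients yields an absolutely convergent bound, giving $|H(r)|\leq Nr^{\rho_{s}}$ for $r\leq 1$.

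For case (ii) the smallest exponent $\rho_{c}$ is attained at a pole of order $n_{c}\geq 2$. Its residue is obtained from the standard Laurent formula
\begin{equation*}
\mathrm{Res}_{z=-\rho_{c}}F(z)r^{-z}
=\frac{1}{(n_{c}-1)!}\lim_{z\to -\rho_{c}} \frac{d^{\,n_{c}-1}}{dz^{n_{c}-1}}\bigl[(z+\rho_{c})^{n_{c}}F(z) r^{-z}\bigr],
\end{equation*}
where $F(z)$ denotes the meromorphic integrand of \eqref{H function}. Differentiating $r^{-z}$ produces factors of $(-\ln r)^{j}$ for $0\leq j\leq n_{c}-1$, and after distributing the derivatives via the Leibniz rule one obtains a linear combination of terms of the form $r^{\rho_{c}}(\ln r)^{j}$, $j\leq n_{c}-1$. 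For $r\leq 1$ the largest of these is $r^{\rho_{c}}|\ln r|^{n_{c}-1}$, which yields the stated bound after again estimating the Gamma-function coefficients and absorbing all remaining (strictly higher-order) residues, which are dominated by $r^{\rho_{c}}|\ln r|^{n_{c}-1}$ as before.

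The main obstacle is the absolute summability of the residue expansion \eqref{appendix:residue}, which underlies both steps above. For each fixed $i$ the residues at $\hat{d}_{ik}$ involve ratios of Gamma functions evaluated at linearly growing arguments in $k$; to show that $\sum_{k\geq 0}|\mathrm{Res}_{z=\hat{d}_{ik}}(\cdots)|\,r^{(d_{i}+k)/\delta_{i}}$ converges and is $O(r^{(d_{i}+1)/\delta_{i}})$ (hence of order smaller than the leading $k=0$ contribution) one must invoke the standing condition \eqref{assumption:analytic}. This condition guarantees that, by Stirling's formula \eqref{stirling}, the Gamma quotient decays fast enough in $k$ to render the series absolutely convergent uniformly for $r\leq 1$, so that the leading residue indeed controls the sum. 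Once this uniform convergence is secured, the two bounds follow immediately from the asymptotic analysis carried out above.
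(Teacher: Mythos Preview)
The paper does not give its own proof of this theorem: immediately before the statement it says the result ``can be proved on the basis of \eqref{appendix:residue}'' and refers to \cite[Corollary 1.12.1]{kilbas2004h} for details. Your sketch is exactly the argument the paper is pointing to---extract the leading residue from \eqref{appendix:residue}, observe that at a pole of order $n$ the Leibniz rule applied to $r^{-z}$ produces $(\ln r)^{j}$ for $j\leq n-1$, and control the tail of the series via Stirling---so there is no methodological difference to discuss.

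Two small comments on your write-up. First, the ``main obstacle'' you flag (absolute summability of the tail) is indeed where the work lies, and invoking \eqref{assumption:analytic} together with \eqref{stirling} is the right move; but as written you have not actually carried it out, only asserted that it can be done. If you want a self-contained proof you must exhibit the decay rate of the residue coefficients in $k$ explicitly. Second, in case (ii) the stated bound $Nr^{\rho_c}|\ln r|^{n_c-1}$ vanishes at $r=1$, so it should be read as an asymptotic relation $H(r)=O(r^{\rho_c}|\ln r|^{n_c-1})$ as $r\to 0^+$ (which is how \cite{kilbas2004h} phrases it); your argument establishes precisely this, and the paper only ever uses the theorem in that asymptotic sense.
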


An upper bound of $H(r)$ on $[1,\infty)$ is also well-known if $n=0$ and $m=\mu$ in \eqref{H function}.
See \cite[Corollary 1.10.2]{kilbas2004h} and \cite[(2.2.2)]{kilbas2004h}.

\begin{thm}
				\label{appendix:kernel estimate1}
Suppose that $n=0$ and $m=\mu$ in \eqref{H function}. Then for $r\geq 1$,
\begin{align}\label{appendix:3}
|H(r)|\leq N r^{(\Lambda+1/2)\omega^{-1}}\exp\{-\omega\eta^{-1/\omega}r^{1/\omega}\},
\end{align}
where
$$
\Lambda:=\sum_{j=1}^{\mu}d_{j}-\sum_{i=1}^{\nu}c_{i}+\frac{\nu-\mu}{2},\quad \omega:=\sum_{j=1}^{\mu}\delta_{j}-\sum_{i=1}^{\nu}\gamma_{i},\quad \eta:=\prod_{j=1}^{\mu}\delta_{j}^{\delta_{j}}\prod_{i=1}^{\nu}\gamma_{i}^{-\gamma_{i}}.
$$
\end{thm}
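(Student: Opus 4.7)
Because the hypothesis $n=0$ means $P_{2}=\emptyset$, the integrand
\[
\Phi(z):=\frac{\prod_{j=1}^{\mu}\Gamma(d_{j}+\delta_{j}z)}{\prod_{i=1}^{\nu}\Gamma(c_{i}+\gamma_{i}z)}
\]
in the Mellin--Barnes representation \eqref{H function} is meromorphic with poles only in a left half-plane. Consequently the Bromwich contour $L_{v}$ may be freely translated to the right without collecting residues. The plan is to deform $L_{v}$ to a vertical line $\Re z = z_{0}(r)$ passing through the stationary point of the phase $\Psi(z):=\log[\Phi(z)r^{-z}]$ and to apply the classical saddle-point (steepest-descent) method.

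\textbf{Phase asymptotics and the saddle.} First I would apply Stirling's formula \eqref{stirling} to each gamma factor to obtain, uniformly as $|z|\to\infty$ with $|\arg z|\leq\pi-\epsilon$,
\[
\log\Gamma(d_{j}+\delta_{j}z)=\delta_{j}z\log z+(\delta_{j}\log\delta_{j}-\delta_{j})z+(d_{j}-\tfrac{1}{2})\log z+O(1).
\]
Summing over the numerator factors and subtracting the analogous contributions from the denominator, the coefficients collapse via the definitions of $\omega$, $\eta$ and $\Lambda$ into
\[
\Psi(z)=\omega z\log z+(\log\eta-\omega-\log r)z+\Lambda\log z+O(1).
\]
The stationary condition $\Psi'(z)\approx\omega\log z-\log r+\log\eta=0$ pins down the saddle
\[
z_{0}=\eta^{-1/\omega}r^{1/\omega},\qquad \Psi''(z_{0})=\frac{\omega}{z_{0}}+O(z_{0}^{-2}),
\]
and a direct evaluation yields $\Psi(z_{0})=-\omega z_{0}+\Lambda\log z_{0}+O(1)$. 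This already delivers the main exponential factor $\exp\{-\omega\eta^{-1/\omega}r^{1/\omega}\}$ and the polynomial $z_{0}^{\Lambda}\sim r^{\Lambda/\omega}$.

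\textbf{Gaussian correction and the main obstacle.} The standard Gaussian integration transverse to the contour at $z_{0}$ supplies an additional factor $(z_{0}/(2\pi\omega))^{1/2}\sim r^{1/(2\omega)}$, boosting the polynomial exponent to $(\Lambda+1/2)/\omega$ and thus matching \eqref{appendix:3}. The main obstacle will be justifying this heuristic by controlling the contour outside a shrinking neighborhood of $z_{0}$. For this I would use the refined decay \eqref{appendix:gamma decay}, which on the line $\Re z=z_{0}$ gives a bound of the form $|\Phi(z_{0}+it)|\leq N|t|^{\Lambda-1/2}\exp(-\tfrac{\pi\omega}{2}|t|)$ for $|t|$ large; since $\omega>0$ by \eqref{assumption:analytic}, these tails decay exponentially and contribute strictly lower-order terms than the Gaussian piece near $z_{0}$. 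Because $P_{2}=\emptyset$, the deformation from the original $L_{v}$ to $\Re z=z_{0}$ crosses no poles, and the $O(1)$ Stirling errors are absorbed into the constant $N$, so the bound \eqref{appendix:3} follows uniformly for $r\geq 1$.
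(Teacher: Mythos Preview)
The paper does not supply its own proof of this theorem: it is stated with a bare citation to \cite[Corollary 1.10.2 and (2.2.2)]{kilbas2004h}, so there is no in-paper argument to compare against. Your proposal, by contrast, sketches an actual proof via the saddle-point method on the Mellin--Barnes integral, which is precisely the classical route (due essentially to Braaksma \cite{braaksma1936asymptotic}, also cited in the paper) by which the result in Kilbas is obtained. The Stirling expansion you wrote is correct, the identification of $\omega$, $\eta$, $\Lambda$ as the coefficients of $z\log z$, $z$, and $\log z$ is right, and the saddle location $z_{0}=\eta^{-1/\omega}r^{1/\omega}$ together with $\Psi(z_{0})=-\omega z_{0}+\Lambda\log z_{0}+O(1)$ and the Gaussian factor $z_{0}^{1/2}$ reproduce exactly the exponent $(\Lambda+\tfrac12)/\omega$ and the exponential in \eqref{appendix:3}.

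One point worth tightening: the statement asks only for an \emph{upper bound}, not a full asymptotic, so you do not actually need the Gaussian steepest-descent step. It suffices to move $L_{v}$ to $\Re z=z_{0}$ (legitimate since $P_{2}=\emptyset$), bound $|\Phi(z_{0}+it)r^{-z_{0}-it}|$ pointwise by $e^{\Re\Psi(z_{0}+it)}$, and use your Stirling expansion to see that $\Re\Psi(z_{0}+it)\leq \Re\Psi(z_{0})+N\log(1+|t|)-c\omega|t|$ for large $|t|$ (this is where \eqref{appendix:gamma decay} and $\omega>0$ enter). Integrating in $t$ then gives a convergent integral bounded by a constant times $z_{0}^{1/2}$, which is cleaner than invoking the full saddle machinery and avoids any delicacy about the ``shrinking neighborhood.''
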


\begin{thm}
Suppose that $n=0$ amd $m=\mu$ in \eqref{H function}. Then
\begin{align}
				\label{appendix:diff}
\frac{d}{dr}H(r)=-r^{-1}\textnormal{H}_{\nu+1\mu+1}^{\mu+1 \ 0}\left[r\ \Big|\begin{array}{cccc}(c_{1},\gamma_{1}) & \cdots & (c_{\nu},\gamma_{\nu}) & (0,1) \\ (d_{1},\delta_{1}) & \cdots & (d_{\mu},\delta_{\mu}) & (1,1)\end{array} \right].
\end{align}
\end{thm}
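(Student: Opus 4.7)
The plan is to obtain the differentiation formula by moving $d/dr$ inside the Mellin--Barnes contour integral defining $H(r)$ and then recognising the new integrand as a Fox H-function via a single gamma identity. Starting from the definition \eqref{H function} in the case $n=0$, $m=\mu$, I write
\begin{equation*}
H(r) = \frac{1}{2\pi \textnormal{i}}\int_{L}\frac{\prod_{j=1}^{\mu}\Gamma(d_{j}+\delta_{j}z)}{\prod_{i=1}^{\nu}\Gamma(c_{i}+\gamma_{i}z)}\,r^{-z}\,dz
\end{equation*}
and apply $\partial_r$ to the factor $r^{-z}=e^{-z\log r}$, obtaining $-z\,r^{-z-1}$. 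After factoring out $-r^{-1}$ the integrand becomes the old integrand multiplied by $z$.

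Next, I would use the identity $z=\Gamma(1+z)/\Gamma(z)$ to rewrite the extra factor. This turns the integrand into
\begin{equation*}
\frac{\prod_{j=1}^{\mu}\Gamma(d_{j}+\delta_{j}z)\cdot\Gamma(1+z)}{\prod_{i=1}^{\nu}\Gamma(c_{i}+\gamma_{i}z)\cdot\Gamma(z)}\,r^{-z},
\end{equation*}
which I recognise, matching it term by term against \eqref{H function}, as the integrand of $\textnormal{H}^{\mu+1\ 0}_{\nu+1\ \mu+1}$ with the original parameters supplemented by the new pair $(1,1)$ in the bottom row (producing $\Gamma(1+z)$ in the numerator, since $m'=\mu+1$) and $(0,1)$ in the top row (producing $\Gamma(z)$ in the denominator, since $n'=0$). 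This yields exactly the right-hand side of \eqref{appendix:diff}.

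The one genuinely technical point is the justification of differentiation under the integral sign and the verification that the contour $L$ is admissible for the new H-function. For the former, I would note that the integrand is an entire function of $r^{-z}=e^{-z\log r}$ in any neighbourhood of a given $r>0$, and that along the Hankel or Bromwich contour the gamma-function ratio decays like $\exp(-c|\Im z|)$ by Stirling's asymptotic \eqref{appendix:gamma decay} together with assumption \eqref{assumption:analytic}; multiplication by the polynomial factor $z$ preserves this exponential decay, so standard dominated-convergence arguments legitimise differentiation inside the integral. For the latter, the factor $\Gamma(1+z)/\Gamma(z)$ introduces no genuine new singularities, since its poles at $z=-1,-2,\ldots$ cancel against the zeros of $1/\Gamma(z)$, and the parameter condition \eqref{assumption:analytic} is preserved verbatim because $\delta_{\mu+1}-\gamma_{\nu+1}=1-1=0$. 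Thus the same contour $L$ serves both H-functions, completing the identification. The main obstacle, then, is purely bookkeeping: matching the new integrand to the indexing convention of the H-function so that the roles of $(1,1)$ and $(0,1)$ land in the correct positions of $P_{1}$ and $P_{2}$.
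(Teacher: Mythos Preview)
Your argument is correct and is the standard derivation of this differentiation formula. The paper itself does not supply a proof of this theorem; it is stated as a known fact about Fox H-functions (in the same spirit as the two preceding theorems, which are cited from Kilbas's monograph) and then used in the kernel computations. So there is no proof in the paper to compare against, and your proposal fills that gap cleanly: differentiate $r^{-z}$ under the contour integral, absorb the resulting factor $z$ via $z=\Gamma(1+z)/\Gamma(z)$, and read off the new H-function parameters. Your checks that \eqref{assumption:analytic} is preserved (both sums change by $1-1=0$) and that $P_1\cap P_2=\emptyset$ remains trivially true (since $n'=0$ forces $P_2=\emptyset$) are exactly what is needed to ensure the same contour serves both integrals.
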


\vspace{3mm}

Now we define $p(t,x)$ so that \eqref{appendix:motivation} holds. Fix $\alpha \in (0,2)$ and let
\begin{align}
				\label{p:H function}
p(t,x)&:=\pi^{-\frac{d}{2}}|x|^{-d}\textnormal{H}_{12}^{20}\left[ \frac{1}{4}t^{-\alpha}|x|^{2}\ \Big|\begin{array}{cc} (1,\alpha) \\ (\frac{d}{2},1) & (1,1)\end{array}\right] \nonumber
\\ &=\pi^{-\frac{d}{2}}|x|^{-d} \frac{1}{2\pi\textnormal{i}}\int_{L_{v}}\frac{\Gamma(\frac{d}{2}+ z)\Gamma(1+z)}{\Gamma(1+\alpha z)}\left(\frac{1}{4}t^{-\alpha}|x|^{2}\right)^{-z}dz.
\end{align}
One can easily check that \eqref{assumption:analytic} is satisfied.
Hence we can take the Bromwich contour $L=L_{v}$, which runs along from $-\gamma-\textnormal{i}\infty$ to $-\gamma+\textnormal{i}\infty$, i.e.
$$
L_{v}:=\{z\in\mathbb{C}:\Re[z]=-\gamma\}
$$
where
$$0<\gamma<\min\{1,\frac{d-1}{4},\frac{1}{\alpha}\} \quad \text{if}\,\, d\geq 2\quad  \text{and} \quad  0<\gamma<\min\{\frac{1}{2},\frac{1}{\alpha}\} \quad \text{if}\,\, d=1.
$$
 Note that
\begin{align*}
\min\{\rho_{s},\rho_{c}\}=
\begin{cases}\rho_{s}=1 & \text{if} \quad \hbox{$d\geq 3$} \\
\rho_{c}=1,\,n_{c}=2 & \text{if} \quad \hbox{$d=2$}
\\ \rho_{s}=\frac{1}{2}& \text{if} \quad \hbox{$d=1$.}
\end{cases}
\end{align*}
By Theorem \ref{appendix:kernel estimate2} and Theorem \ref{appendix:kernel estimate1},
$$p(t,\cdot)\in L_{1}(\mathbf{R}^{d}), \quad \forall\, t>0.
 $$
 For the  asymptotic behavior  of the integrand in \eqref{p:H function} we use Stirling's approximation.
Write $z=-\gamma+\textnormal{i}\tau$, $\tau\in(-\infty,\infty)$.
Then by \eqref{appendix:gamma decay},  for $\rho\in(0,\infty)$, $t\in(0,\infty)$, and large $|\tau|$,
\begin{align}\label{appendix:decay}
\left|\frac{\Gamma(\frac{d}{2}+z)\Gamma(1+z)}{\Gamma(1+\alpha z)}\left(\frac{1}{4}t^{-\alpha}\rho^{2}\right)^{-z}\right|\leq N\left(\frac{1}{4}t^{-\alpha}\rho^{2}\right)^{\gamma}|\tau|^{c_{1}}e^{-c_{2}|\tau|},
\end{align}
where
$$
c_{1}:=-\gamma(2-\alpha)+\frac{d-1}{2},\quad c_{2}:=\frac{\pi}{2}(2-\alpha).
$$
Therefore,
\begin{align}\label{appendix:bound}
\sup_{z\in L}\left|\frac{\Gamma(\frac{d}{2}+z)\Gamma(1+z)}{\Gamma(1+\alpha z)}\left(\frac{1}{4}t^{-\alpha}\rho^{2}\right)^{-z}\right|\leq N\left(\frac{1}{4}t^{-\alpha}\rho^{2}\right)^{\gamma},
\end{align}
  because for fixed $a\notin\{0,-1,-2,-3,\ldots\}$ the mapping $b\mapsto\Gamma(a+\textnormal{i}b)$ is a continuous function and does not vanish on $\mathbf{R}$.  Hence
\begin{align}
				\label{appendix:1}
&\left|\int_{L}\frac{\Gamma(\frac{d}{2}+z)\Gamma(1+z)}{\Gamma(1+\alpha z)}\left(\frac{1}{4}t^{-\alpha}\rho^{2}\right)^{-z}dz\right| \nonumber\\
&\quad\quad\quad\leq \int_{-\infty}^{\infty}\left|\frac{\Gamma(\frac{d}{2}-\gamma+\textnormal{i}\tau)\Gamma(1-\gamma+\textnormal{i}\tau))}{\Gamma(1-\alpha \gamma+\textnormal{i}\alpha\tau)}\left(\frac{1}{4}t^{-\alpha}\rho^{2}\right)^{\gamma-\textnormal{i}\tau}\right|d\tau\nonumber \\
&\quad\quad\quad\leq Nt^{-\alpha\gamma}\rho^{2\gamma}\left\{N+\int_{|\tau|\geq 1}|\tau|^{c_{1}}e^{-c_{2}|\tau|}d\tau\right\}\leq Nt^{-\alpha\gamma}\rho^{2\gamma}.
\end{align}
We remark that \eqref{appendix:decay}, \eqref{appendix:bound}, and \eqref{appendix:1} hold for any $0<\gamma< \min\{1,\frac{d}{2},\frac{1}{\alpha}\} $.

Now we prove \eqref{appendix:motivation}. First assume $d\geq 2$. By the formula for the Fourier transform of a radial function (see  \cite[Theorem IV.3.3]{stein1971introduction}),
we have
\begin{align*}
\mathcal{F}\{p(t,\cdot)\}(\xi)&=\frac{2^{d/2}}{|\xi|^{\frac{d}{2}-1}}\int_{0}^{\infty}\rho^{-\frac{d}{2}}\textnormal{H}_{12}^{20}\left[\frac{1}{4}t^{-\alpha}\rho^{2}\ \Big|\begin{array}{cc} (1,\alpha)\\ (\frac{d}{2},1)&(1,1)\end{array}\right]J_{\frac{d}{2}-1}(|\xi|\rho)d\rho,
\end{align*}
where $J_{\frac{d}{2}-1}$ is the Bessel function of the first kind of order $\frac{d}{2}-1$, i.e. for $r\in[0,\infty)$,
$$
J_{\frac{d}{2}-1}(r)=\sum_{k=0}^{\infty}\frac{(-1)^{k}}{k!\Gamma(k+\frac{d}{2})}\left(\frac{r}{2}\right)^{2k-1+d/2}.
$$
 It is well-known    (e.g. \cite[(2.6.3)]{kilbas2004h}) that if  $m>-1$ then
$$
J_{m}(t)= \begin{cases}O(t^{m}), & \hbox{$t\rightarrow 0+$} \\ O(t^{-1/2}), & \hbox{$t\rightarrow\infty$}. \end{cases}
$$
Thus
\begin{align}\label{appendix:2}
\int_{0}^{\infty}|\rho^{-\frac{d}{2}+2\gamma}J_{\frac{d}{2}-1}(|\xi|\rho)|d\rho\leq \int_{0}^{1}\rho^{2\gamma-1}d\rho+\int_{1}^{\infty}\rho^{-\frac{d}{2}+2\gamma-\frac{1}{2}}d\rho < \infty
\end{align}
since $0<\gamma<\min\{1,\frac{d-1}{4},\frac{1}{\alpha}\}$.
By the definition of the Fox H-function,
\begin{align}
				\label{appendix:contour integral}
&\int_{0}^{\infty}\rho^{-\frac{d}{2}}\textnormal{H}_{12}^{20}\left[\frac{1}{4}t^{-\alpha}\rho^{2}\ \Big| \begin{array}{cc} (1,\alpha) \\ (\frac{d}{2},1) & (1,1) \end{array}\right]J_{\frac{d}{2}-1}(|\xi|\rho)d\rho \nonumber\\
&=\frac{1}{2\pi\textnormal{i}}\int_{0}^{\infty}\rho^{-\frac{d}{2}}\left[\int_{L}\frac{\Gamma(\frac{d}{2}+z)\Gamma(1+z)}{\Gamma(1+\alpha z)}\left(\frac{1}{4}t^{-\alpha}\rho^{2}\right)^{-z}dz\right]J_{\frac{d}{2}-1}(|\xi|\rho)d\rho.
\end{align}
Combining \eqref{appendix:1} and \eqref{appendix:2}, we have
$$
\int_{0}^{\infty}\int_{L}\left|\rho^{-\frac{d}{2}}\frac{\Gamma(\frac{d}{2}+z)\Gamma(1+z)}{\Gamma(1+\alpha z)}
\left(\frac{1}{4}t^{-\alpha}\rho^{2}\right)^{-z}
J_{\frac{d}{2}-1}(|\xi|\rho)\right||dz|d\rho<\infty.
$$
Thus we can apply Fubini's theorem to \eqref{appendix:contour integral}.
Furthermore, since
$$
-\frac{d}{2}<\Re[-\frac{d}{2}-2z]<-\frac{1}{2} \quad \quad \forall z \in L,
$$
by using the formula \cite[(2.6.4)]{kilbas2004h}
\begin{align*}
\int_{0}^{\infty}\rho^{-\frac{d}{2}-2z}J_{\frac{d}{2}-1}(|\xi|\rho)d\rho = 2^{-\frac{d}{2}-2z}|\xi|^{\frac{d}{2}-1+2z}\frac{\Gamma(-z)}{\Gamma(\frac{d}{2}+z)},
\end{align*}
we get
\begin{align*}
&\int_{0}^{\infty}\rho^{-\frac{d}{2}}\textnormal{H}_{12}^{20}\left[\frac{1}{4}t^{-\alpha}\rho^{2}\ \Big|\begin{array}{cc} (1,\alpha)\\ (\frac{d}{2},1)&(1,1)\end{array}\right]J_{\frac{d}{2}-1}(|\xi|\rho)d\rho \\
&=\frac{1}{2\pi\textnormal{i}}\int_{L}\left[\int_{0}^{\infty}\rho^{-\frac{d}{2}-2z}J_{\frac{d}{2}-1}(|\xi|\rho)d\rho\right]\frac{\Gamma(\frac{d}{2}+z)\Gamma(1+z)}{\Gamma(1+\alpha z)}\left(\frac{1}{4}t^{-\alpha}\right)^{-z}dz \\
&=\frac{|\xi|^{\frac{d}{2}-1}}{2^{d/2}}\frac{1}{2\pi\textnormal{i}}\int_{L}\frac{\Gamma(\frac{d}{2}+z)\Gamma(1+z)\Gamma(-z)}{\Gamma(1+\alpha z)\Gamma(\frac{d}{2}+z)}(t^{-\alpha}|\xi|^{-2})^{-z}dz\\
&=\frac{|\xi|^{\frac{d}{2}-1}}{2^{d/2}}\frac{1}{2\pi\textnormal{i}}\int_{-L}\frac{\Gamma(1-z)\Gamma(z)}{\Gamma(1-\alpha z)}(t^{\alpha}|\xi|^{2})^{-z}dz\\
&=\frac{|\xi|^{\frac{d}{2}-1}}{2^{d/2}}\textnormal{H}_{12}^{11}\left[t^{\alpha}|\xi|^{2}\ \Big|\begin{array}{cc} (0,1) \\ (0,1) & (0,\alpha)\end{array}\right]=\frac{|\xi|^{\frac{d}{2}-1}}{2^{d/2}}E_{\alpha}(-t^{\alpha}|\xi|^{2})
\end{align*}
where the last equality is due to \eqref{mit ref}.
Therefore  \eqref{appendix:motivation} is proved for $d\geq 2$.

Next let $d=1$.
By \eqref{16.5},
$$
\mathcal{L}\left[E_{\alpha}(-t^{\alpha}|\xi|^{2})\right])(s)=\int_{0}^{\infty}e^{-st}E_{\alpha}(-t^{\alpha}|\xi|^{2})dt=\frac{s^{\alpha-1}}{s^{\alpha}+|\xi|^{2}}.
$$
Thus it suffices to prove
\begin{align}
				\label{appendix:1-dim case}
\mathcal{L}\left[\mathcal{F}\left\{p(t,\cdot)\right\}\right](s)=\int_{0}^{\infty}e^{-st}\mathcal{F}\left\{p(t,\cdot)\right\}(\xi)dt=\frac{s^{\alpha-1}}{s^{\alpha}+|\xi|^{2}}.
\end{align}
By Theorems \ref{appendix:kernel estimate2} and \ref{appendix:kernel estimate1} it holds that for each $s>0$,
\begin{align*}
 \int_{0}^{\infty}\int_{\mathbf{R}}e^{-st}|p(t,x)|dxdt<\infty.
\end{align*}
Hence by Fubini's theorem,
$\mathcal{L}\left[\mathcal{F}\left\{p(t,\cdot)\right\}\right](s)
=\mathcal{F}\left\{\mathcal{L}\left[p(\cdot,x)\right](s)\right\}$.
Due to \eqref{appendix:Beta} and \eqref{appendix:1},
\begin{align*}
&\mathcal{L}\left[p(\cdot,x)\right](s)=\int_{0}^{\infty}e^{-st}p(t,x)dt \\
			&= \pi^{-\frac{1}{2}}|x|^{-1}\int_{0}^{\infty}e^{-st}\left[\frac{1}{2\pi\textnormal{i}}\int_{L}\frac{\Gamma(\frac{1}{2}+z)\Gamma(1+z)}{\Gamma(1+\alpha z)}\left(\frac{1}{4}t^{-\alpha}|x|^{2}\right)^{-z}dz\right]dt \\
			 &=\frac{\pi^{-\frac{1}{2}}|x|^{-1}}{2\pi\textnormal{i}}\int_{L}\frac{\Gamma(\frac{1}{2}+z)\Gamma(1+z)}{\Gamma(1+\alpha z)}\left(\frac{1}{4}|x|^{2}\right)^{-z}\left[\int_{0}^{\infty}e^{-st}t^{\alpha z}dt\right]dz\\
			 &=\frac{\pi^{-\frac{1}{2}}|x|^{-1}s^{-1}}{2\pi\textnormal{i}}\int_{L}\Gamma(\frac{1}{2}+z)\Gamma(1+z)\left(\frac{1}{4}s^{\alpha}|x|^{2}\right)^{-z}dz.
\end{align*}
Furthermore by \cite[(2.9.19)]{kilbas2004h},
\begin{align*}
&\frac{1}{2\pi \textnormal{i}}\int_{L}\Gamma(\frac{1}{2}+z)\Gamma(1+z)\left(\frac{1}{4}s^{\alpha}|x|^{2}\right)^{-z}dz \\
&= \textnormal{H}_{02}^{20}\left[\frac{1}{4}s^{\alpha}|x|^{2}\Big|\begin{array}{cc} \\ (\frac{1}{2},1) & (1,1) \end{array}\right]
= 2\left(\frac{s^{\alpha/2}|x|}{2}\right)^{3/2}K_{1/2}(s^{\alpha/2}|x|),
\end{align*}
where $K_{\eta}(z)$ is called the modified Bessel function of the second kind\footnote{It has also been called the modified Bessel function of the third kind.} or the Macdonald function
and it satisfies (see \cite[9.7.2]{Bessel})
$$
K_{1/2}(z)=\sqrt{\frac{\pi}{2}}z^{-1/2}e^{-z},\quad |\textnormal{arg}z|<\frac{3\pi}{2}.
$$
Hence we obtain
$$
\mathcal{L}\left[p(\cdot,x)\right](s)=\frac{s^{\alpha/2-1}}{2}\exp\{-s^{\alpha/2}|x|\},
$$
which obviously implies that
\begin{align*}
\mathcal{F}\left\{\mathcal{L}\left[p(\cdot,x)\right](s)\right\}(\xi)& =\frac{s^{\alpha/2-1}}{2}\int_{-\infty}^{\infty}e^{-\textnormal{i}x\xi}\exp\{-s^{\alpha/2}|x|\}dx \\
				& = \frac{s^{\alpha/2-1}}{2}\cdot \frac{2s^{\alpha/2}}{s^{\alpha}+|\xi|^{2}} = \frac{s^{\alpha-1}}{s^{\alpha}+|\xi|^{2}}.
\end{align*}
Therefore \eqref{appendix:1-dim case} is proved, and  \eqref{appendix:motivation} holds.

\subsection{Representation of $q(t,x)$ and $K(t,x)$}
Let $\alpha \in (0,2)$ and recall
\begin{align*}
p(t,x):=\pi^{-\frac{d}{2}}|x|^{-d}\textnormal{H}_{12}^{20}\left[ \frac{1}{4}t^{-\alpha}|x|^{2}\ \Big|\begin{array}{cc} (1,\alpha)
 \\ (\frac{d}{2},1) & (1,1)\end{array}\right]. \nonumber
\end{align*}
By Theorems \ref{appendix:kernel estimate1} and \eqref{appendix:diff}, if $t\neq 0$ and $x\neq 0$ then $p(t,x)$ is differentiable in $t$ and $\lim_{t \to 0+} p(t,x)=0$. Thus we can define
$$
q(t,x):=\begin{cases}I_{t}^{\alpha-1}p(t,x), & \hbox{$\alpha\in(1,2)$} \\
D_{t}^{1-\alpha}p(t,x), &  \hbox{$\alpha\in(0,1)$} \end{cases}, \quad \quad K(t,x):=\frac{\partial}{\partial t}p(t,x).
$$
In this subsection, we derive the following representations:
\begin{align}\label{appendix:q}
q(t,x)=\pi^{-\frac{d}{2}}|x|^{-d}t^{\alpha-1}\textnormal{H}_{12}^{20}\left[\frac{1}{4}t^{-\alpha}|x|^{2}\ \Big|\begin{array}{cc} (\alpha,\alpha) \\ (\frac{d}{2},1) & (1,1)\end{array}\right]
\end{align}
and
\begin{align}
                    \label{K equal}
K(t,x)=\pi^{-\frac{d}{2}}|x|^{-d}t^{-1}\textnormal{H}_{12}^{20}\left[\frac{1}{4}t^{-\alpha}|x|^{2}\ \begin{array}{cc}(0,\alpha) \\ (\frac{d}{2},1) & (1,1)\end{array}\right].
\end{align}

We consider the Bromwich contour
$$
L_{v}:=\{z\in\mathbb{C}:\Re[z]=-\gamma\}, \quad \quad 0<\gamma<\min\{1,\frac{d}{2},\frac{1}{\alpha}\}.
$$
Let $\beta>0$.
By \eqref{appendix:1},
\begin{align*}
&\int_{0}^{t}(t-s)^{\beta-1}\left(\int_{L}\left|\frac{\Gamma(\frac{d}{2}+z)\Gamma(1+z)}{\Gamma(1+\alpha z)}\left(\frac{1}{4}s^{-\alpha}|x|^{2}\right)^{-z}\right||dz|\right)ds\\
&\leq N|x|^{2\gamma} \int^t_0(t-s)^{\beta-1}(t-s)^{-\alpha \gamma}ds <\infty.
\end{align*}
Thus by Fubini's theorem and \eqref{appendix:Beta},
\begin{align}
                 \notag
&I_{t}^{\beta}p(t,x)=\frac{1}{\Gamma(\beta)}\int_{0}^{t}(t-s)^{\beta-1}p(s,x)ds \\
                    \notag
&=\frac{\pi^{-\frac{d}{2}}|x|^{-d}}{\Gamma(\beta)}\int_{0}^{t}(t-s)^{\beta-1}\left[\frac{1}{2\pi\textnormal{i}}\int_{L}\frac{\Gamma(\frac{d}{2}+z)\Gamma(1+z)}{\Gamma(1+\alpha z)}\left(\frac{1}{4}s^{-\alpha}|x|^{2}\right)^{-z}dz\right]ds \\
                    \notag
&=\frac{\pi^{-\frac{d}{2}}|x|^{-d}}{2\pi\textnormal{i}}\int_{L}\frac{\Gamma(\frac{d}{2}+z)\Gamma(1+z)}{\Gamma(1+\alpha z)}\left[\frac{1}{\Gamma(\beta)}\int_{0}^{t}(t-s)^{\beta -1}s^{\alpha z}ds\right]\left(\frac{1}{4}|x|^{2}\right)^{-z}dz \\
                    \label{integ-beta-p}
&=\pi^{-\frac{d}{2}}|x|^{-d}t^{\beta}\frac{1}{2\pi\textnormal{i}}\int_{L}\frac{\Gamma(\frac{d}{2}+z)\Gamma(1+z)}{\Gamma(1+\beta+\alpha z)}\left(\frac{1}{4}t^{-\alpha}|x|^{2}\right)^{-z}dz \\
                    \notag
&=\pi^{-\frac{d}{2}}|x|^{-d}t^{\beta}\textnormal{H}_{12}^{20}\left[\frac{1}{4}t^{-\alpha}|x|^{2}\ \Big|\begin{array}{cc} (1+\beta,\alpha) \\ (\frac{d}{2},1) & (1,1)\end{array}\right].
\end{align}
Therefore, \eqref{appendix:q} is proved for $\alpha\in(1,2)$.

Next we differentiate kernels with respect to $t$.
Write $z=-\gamma+\textnormal{i}\tau$.
Following the method used to prove \eqref{appendix:decay} and \eqref{appendix:bound}, for any $\alpha \in (0,2)$ and $\beta \in [0,2)$  we have
\begin{align}
				\label{appendix:Fubini}
&\left|\frac{\Gamma(\frac{d}{2}+z)\Gamma(1+z)}{\Gamma(1+\beta+\alpha z)}\left(\frac{1}{4}|x|^{2}\right)^{-z}\right|\left|\frac{d}{dt}t^{\alpha z}\right|\nonumber\\
&\leq N(d,\gamma,\beta) t^{-1}\left(\frac{1}{4}t^{-\alpha}|x|^{2}\right)^{\gamma}\left[(\gamma^{2}+|\tau|^{2})^{1/2}(1_{|\tau|\geq 1}|\tau|^{c_{1}-\beta}e^{-c_{2}|\tau|}+1_{|\tau|\leq 1})\right].
\end{align}
Hence  the time derivative  of the integrand in \eqref{integ-beta-p} is   integrable in $z$ uniformly in a neighborhood of $t>0$, and thus by the dominated convergence theorem,
\begin{align*}
\frac{d}{dt}\textnormal{H}_{12}^{20}&\left[\frac{1}{4}t^{-\alpha}|x|^{2}\ \Big|\begin{array}{cc} (1+\alpha,\alpha) \\ (\frac{d}{2},1) & (1,1) \end{array}\right]\\
&=\frac{1}{2\pi\textnormal{i}}\frac{d}{dt}\left\{\int_{L}\frac{\Gamma(\frac{d}{2}+z)\Gamma(1+z)}{\Gamma(1+\alpha+\alpha z)}\left(\frac{1}{4}t^{-\alpha}|x|^{2}\right)^{-z}dz\right\}\\
&=\frac{t^{-1}}{2\pi\textnormal{i}}\int_{L}\frac{\Gamma(\frac{d}{2}+z)\Gamma(1+z)}{\Gamma(1+\alpha+\alpha z)}\alpha z \left(\frac{1}{4}t^{-\alpha}|x|^{2}\right)^{-z}dz.
\end{align*}
Using the  relation
$$
\frac{\alpha z}{\Gamma(\alpha+\alpha z+1)}=\frac{1}{\Gamma(\alpha+\alpha z)}-\frac{\alpha}{\Gamma(\alpha+\alpha z+1)},
$$
we obtain
\begin{align*}
&\frac{d}{dt}\textnormal{H}_{12}^{20}\left[\frac{1}{4}t^{-\alpha}|x|^{2}\ \Big|\begin{array}{cc} (1+\alpha,\alpha) \\ (\frac{d}{2},1) & (1,1) \end{array}\right]
\\ &=\frac{t^{-1}}{2\pi\textnormal{i}}\int_{L}\left\{\frac{\Gamma(\frac{d}{2}+z)\Gamma(1+z)}{\Gamma(\alpha+\alpha z)}-\alpha\frac{\Gamma(\frac{d}{2}+z)\Gamma(1+z)}{\Gamma(\alpha+\alpha z+1)}\right\}\left(\frac{1}{4}t^{-\alpha}|x|^{2}\right)^{-z}dz
\\ &=t^{-1}\left\{\textnormal{H}_{12}^{20}\left[\frac{1}{4}t^{-\alpha}|x|^{2}\ \Big|\begin{array}{cc} (\alpha,\alpha) \\ (\frac{d}{2},1) & (1,1) \end{array}\right]-\alpha\textnormal{H}_{12}^{20}\left[\frac{1}{4}t^{-\alpha}|x|^{2}\ \Big|\begin{array}{cc} (1+\alpha,\alpha) \\ (\frac{d}{2},1) & (1,1) \end{array}\right]\right\}.
\end{align*}
Thus,
\begin{align*}
D_{t}^{1-\alpha}p(t,x)&=\frac{d}{dt}I_{t}^{\alpha}p(t,x)\\
&=\frac{d}{dt}\left\{\pi^{-\frac{d}{2}}|x|^{-d}t^{\alpha}\textnormal{H}_{12}^{20}\left[\frac{1}{4}t^{-\alpha}|x|^{2}\ \Big|\begin{array}{cc} (1+\alpha,\alpha) \\ (\frac{d}{2},1) & (1,1) \end{array}\right]
\right\}\\
&=\pi^{-d/2}|x|^{-d}\bigg\{\alpha t^{\alpha-1}\textnormal{H}_{12}^{20}\left[\frac{1}{4}t^{-\alpha}|x|^{2}\ \Big|\begin{array}{cc} (1+\alpha,\alpha) \\ (\frac{d}{2},1) & (1,1) \end{array}\right]\\
&\quad\quad\quad\quad +t^{\alpha} \frac{d}{dt}\textnormal{H}_{12}^{20}\left[\frac{1}{4}t^{-\alpha}|x|^{2}\ \Big|\begin{array}{cc} (1+\alpha,\alpha) \\ (\frac{d}{2},1) & (1,1) \end{array}\right]\bigg\}\\
&=\pi^{-d/2}|x|^{-d}t^{\alpha-1}\textnormal{H}_{12}^{20}\left[\frac{1}{4}t^{-\alpha}|x|^{2}\ \Big|\begin{array}{cc} (\alpha,\alpha) \\ (\frac{d}{2},1) & (1,1) \end{array}\right].
\end{align*}
Similarly, using the relation
$$
\frac{\alpha z}{\Gamma(\alpha z+1)}=\frac{1}{\Gamma(\alpha z)},
$$
we have
\begin{align*}
K(t,x)
&=\frac{d}{dt}\textnormal{H}_{12}^{20}\left[\frac{1}{4}t^{-\alpha}|x|^{2}\ \Big|\begin{array}{cc} (1,\alpha) \\ (\frac{d}{2},1) & (1,1) \end{array}\right] \\
&=\pi^{-\frac{d}{2}}|x|^{-d}t^{-1}\textnormal{H}_{12}^{20}\left[\frac{1}{4}t^{-\alpha}|x|^{2}\ \begin{array}{cc}(0,\alpha) \\ (\frac{d}{2},1) & (1,1)\end{array}\right].
\end{align*}
Therefore \eqref{appendix:q} and \eqref{K equal} are proved.

\subsection{Estimates of $p(t,x)$ and $q(t,x)$}

In this subsection we prove Lemma \ref{p exists}(ii) and Lemma \ref{prop:kernel estimate}.
Since the case $\alpha=1$ is easier, we assume  $\alpha \neq 1$.

By  \eqref{H function} and \eqref{appendix:residue} with $n=0$ and $m=\mu$,
\begin{align*}
&\textnormal{H}_{\nu\mu}^{\mu 0}\left[r\ \Big|\begin{array}{ccc}
(c_{1},\gamma_{1}) & \cdots & (c_{\nu},\gamma_{\nu}) \\
(d_{1},\delta_{1}) & \cdots & (d_{\mu},\delta_{\mu})
\end{array}\right]
=\sum_{i=1}^{m}\sum_{k=0}^{\infty}\textnormal{Res}_{z=\hat{d}_{ik}}
\left[\frac{\prod_{j=1}^{\mu}\Gamma(d_{j}+\delta_{j}z)}{\prod_{j=1}^{\nu}\Gamma(c_{j}+\gamma_{j}z)}r^{-z}\right]\nonumber\\
				&=\sum_{i=1}^{m}\sum_{k=0}^{\infty}\lim_{z\rightarrow \hat{d}_{ik}}\left\{\left(\frac{d}{dz}\right)^{n_{ik}-1}\left(\frac{(z-\hat{d}_{ik})^{n_{ik}}}{(n_{ik}-1)!}\frac{\prod_{j=1}^{\mu}\Gamma(d_{j}+\delta_{j}z)}{\prod_{j=1}^{\nu}\Gamma(c_{j}+\gamma_{j}z)}r^{-z}\right)\right\},
\end{align*}
where $\hat{d}_{ik}=-(d_{i}+k)/\delta_{i}\in P_{1}$ is a pole of the integrand in the contour integral and $n_{ik}$ is its order for $i=1,\ldots,\mu$ and $k=0,1,2,\cdots$.

Let $R:=t^{-\alpha}|x|^{2}$, and denote
$$
\textnormal{H}_{k,l}^{p}(R):=\textnormal{H}_{1+k+l\ 2+k+l}^{2+k+l\ 0}\left[\frac{1}{4}R\ \Big|\begin{array}{ccccc}(1,\alpha) & (0,1) &\cdots &(0,1) \\ (\frac{d}{2},1) & (1,1) & \cdots &(1,1) & (1,1) \end{array}\right]
$$
and
$$
\textnormal{H}_{k,l}^{q}(R):=\textnormal{H}_{1+k+l\ 2+k+l}^{2+k+l\ 0}\left[\frac{1}{4}R\ \Big|\begin{array}{ccccc}(\alpha,\alpha) & (0,1) &\cdots &(0,1) \\ (\frac{d}{2},1) & (1,1) & \cdots &(1,1) & (1,1) \end{array}\right],
$$
where $k,l=0,1,2,\cdots$.
Then by \eqref{appendix:diff},
\begin{align*}
\left|D_{x^{i}}\left(|x|^{-d} \textnormal{H}_{k,0}^{p}(R)\right)\right|&=\left|-dx^{i}|x|^{-d-2}\textnormal{H}_{k,0}^{p}(R)-2x^{i}|x|^{-d-2}\textnormal{H}_{k,1}^{p}(R)\right| \\
	 &=\left|-x^{i}|x|^{-d-2}[d\textnormal{H}_{k,0}^{p}(R)+2\textnormal{H}_{k,1}^{p}(R)]\right|\\
	&\leq N |x|^{-d-1}\left|d\textnormal{H}_{k,0}^{p}(R)+2\textnormal{H}_{k,1}^{p}(R)\right|,
\end{align*}
and
\begin{align*}
&\left|D_{x^{j}}D_{x^{i}}\left(|x|^{-d} \textnormal{H}_{k,0}^{p}(R)\right)\right|\\
&=|D_{x^{j}}\left(-x^{i}|x|^{-d-2}[d\textnormal{H}_{k,0}^{p}(R)+2\textnormal{H}_{k,1}^{p}(R)]\right)|\\
	 &=|(-\delta_{ij}|x|^{-d-2}+(d+2)x^{i}x^{j}|x|^{-d-4})[d\textnormal{H}_{k,0}^{p}(R)+2\textnormal{H}_{k,1}^{p}(R)] \\
	 &\quad\quad\quad+2x^{i}x^{j}|x|^{-d-4}[d\textnormal{H}_{k,1}^{p}(R)+2\textnormal{H}_{k,2}^{p}(R)]|\\
	&\leq N|x|^{-d-2}\sum_{l=1}^{2}|d\textnormal{H}_{k,l-1}^{p}(R)+2\textnormal{H}_{k,l}^{p}(R)|.
\end{align*}
Inductively, for any $m=3,4, \cdots$ and $k=0,1,\cdots$, we have
\begin{align*}
\left|D_{x}^{m}\left(|x|^{-d} \textnormal{H}_{k,0}^{p}(R)\right)\right|\leq N|x|^{-d-m}\sum_{l=1}^{m}\left|d\textnormal{H}_{k,l-1}^{p}(R)+2\textnormal{H}_{k,l}^{p}(R)\right|.
\end{align*}
Hence
\begin{align}
				\label{appendix:p-large}
|\partial_{t}^{n}D_{x}^{m}p(t,x)| &= \left|D_{x}^{m}\left(|x|^{-d}\partial_{t}^{n}\textnormal{H}_{12}^{20}\left[\frac{1}{4}R\ \Big|\begin{array}{cc}(1,\alpha) \\ (\frac{d}{2},1) & (1,1)\end{array}\right]\right)\right|\nonumber \\
& \leq N t^{-n}\sum_{k=1}^{n}\left|D_{x}^{m}\left(|x|^{-d} \textnormal{H}_{k,0}^{p}(R)\right)\right|\nonumber\\
&\leq N|x|^{-d-m}t^{-n}\sum_{k=1}^{n}\sum_{l=1}^{m}\left|d\textnormal{H}_{k,l-1}^{p}(R)+2\textnormal{H}_{k,l}^{p}(R)\right|.
\end{align}
Similarly, 
\begin{align*}
|D_{x}^{m}\left(|x|^{-d} \textnormal{H}_{k,0}^{q}(R)\right)|\leq N|x|^{-d-m}\sum_{l=1}^{m}\left|d\textnormal{H}_{k,l-1}^{q}(R)+2\textnormal{H}_{k,l}^{q}(R)\right|
\end{align*}
and
\begin{align}
				\label{appendix:q-large}
|\partial_{t}^{n}D_{x}^{m}q(t,x)|
& \leq N t^{-n+\alpha-1}\sum_{k=1}^{n}\left|D_{x}^{m}\left(|x|^{-d} \textnormal{H}_{k,0}^{q}(R)\right)\right|\nonumber\\
&\leq N|x|^{-d-m}t^{-n+\alpha-1}\sum_{k=0}^{n}\sum_{l=1}^{m}\left|d\textnormal{H}_{k,l-1}^{q}(R)+2\textnormal{H}_{k,l}^{q}(R)\right|.
\end{align}

Now we prove \eqref{p-1} and \eqref{q-1}.  If $R\geq 1$, by Theorem \ref{appendix:kernel estimate1}
$$
\textnormal{H}_{k,l}^{p}(R)\leq N R^{(\frac{d}{2}+k+l)/(2-\alpha)}\exp\{-\sigma R^{\frac{1}{2-\alpha}}\}
$$
and
$$
\textnormal{H}_{k,l}^{q}(R)\leq NR^{(\frac{d}{2}+k+l+1-\alpha)/(2-\alpha)}\exp\{-\sigma R^{\frac{1}{2-\alpha}}\},
$$
where $\sigma=(2-\alpha)\alpha^{\alpha/(2-\alpha)}$.
Hence by \eqref{appendix:p-large}
\begin{align*}
|\partial_{t}^{n}D_{x}^{m}p(t,x)|&\leq N|x|^{-d-m}t^{-n}\sum_{k=1}^{n}\sum_{l=0}^{m}R^{(\frac{d}{2}+k+l)/(2-\alpha)}\exp\{-\sigma R^{\frac{1}{2-\alpha}}\}\\
&\leq N (t^{-\alpha/2})^{d+m}t^{-n}\exp\{-(\sigma/2) t^{-\frac{\alpha}{2-\alpha}}|x|^{\frac{2}{2-\alpha}}\}\\
&\leq N t^{\frac{-\alpha(d+m)}{2}-n}\exp\{-(\sigma/2) t^{-\frac{\alpha}{2-\alpha}}|x|^{\frac{2}{2-\alpha}}\}.
\end{align*}
Similarly, by \eqref{appendix:q-large}
\begin{align*}
|\partial_{t}^{n}D_{x}^{m}q(t,x)|\leq N t^{\frac{-\alpha(d+m)}{2}-n+\alpha-1}\exp\{-(\sigma/2) t^{-\frac{\alpha}{2-\alpha}}|x|^{\frac{2}{2-\alpha}}\}.
\end{align*}
Thus \eqref{p-1} and \eqref{q-1} are proved.

To prove \eqref{p-3}, we recall \eqref{appendix:gamma1}. For $k,l =0,1,\ldots$,  denote
$$
\Theta_{k,l}^{p}(z)=\frac{\Gamma(\frac{d}{2}+z)\{\Gamma(1+z)\}^{k+l+1}}{\Gamma(1+\alpha z)\{\Gamma(z)\}^{k+l}}=\frac{\Gamma(\frac{d}{2}+z)\Gamma(1+z)}{\Gamma(1+\alpha z)}z^{k+l}.
$$
First we assume that $d$ is an odd number.
Then $\Theta_{k,l}^{p}$ has simple poles at $d_{1j}=-1-j$ and $d_{2j}=-\frac{d}{2}-j$ for $j=0,1,2,\cdots$.
Due to \eqref{appendix:gamma-pole} and \eqref{appendix:residue}, for $R \leq 1$ we have
\begin{align*}
&\textnormal{H}_{k,l}^{p}(R)=\sum_{i=1}^{2}\sum_{j=0}^{\infty}\textnormal{Res}_{z=d_{ij}}\left[\Theta_{k,l}^{p}R^{-z}\right]\nonumber\\
&=\sum_{i=1}^{2}\sum_{j=0}^{\infty}\lim_{z\rightarrow d_{ij}}\left((z-d_{ij})\Theta_{k,l}^{p}(z)R^{-z}\right)\nonumber\\
&=\sum_{j=0}^{\infty}(-1-j)^{k+l}\cdot \frac{(-1)^{j}}{j!}\cdot \frac{\Gamma(\frac{d}{2}-1-j)}{\Gamma(1-\alpha-j\alpha)}R^{1+j}\nonumber \\
&\quad+\sum_{j=0}^{\infty}\left(-\frac{d}{2}-j\right)^{k+l}\cdot\frac{(-1)^{j}}{j!}\cdot\frac{\Gamma(1-\frac{d}{2}-j)}{\Gamma(1-\frac{\alpha d}{2}-\alpha j)}R^{\frac{d}{2}+j}\nonumber\\
&=(-1)^{k+l}\frac{\Gamma(\frac{d}{2}-1)}{\Gamma(1-\alpha)}R+\left(-\frac{d}{2}\right)^{k+l}\frac{\Gamma(1-\frac{d}{2})}{\Gamma(1-\frac{\alpha d}{2})}R^{\frac{d}{2}}+O(R)(R^{\frac{d}{2}+1}+R^2),
\end{align*}
where $O(R)$ is bounded in $(0,1]$ by \eqref{stirling} and \eqref{appendix:gamma1}.
Hence
\begin{align}
                    \label{h0 est 1}
|\textnormal{H}_{k,0}^{p}(R)| \leq N(R+R^{1/2}\cdot 1_{d=1}).
\end{align}
Moreover since
\begin{align}
				\label{cancel}
d\left(-\frac{d}{2}\right)^{k+l-1}+2\left(-\frac{d}{2}\right)^{k+l}=0 \quad \quad l=1,2,\ldots,
\end{align}
it holds that for all $l \geq 1$
\begin{align}
								\label{appendix:d odd}
|d\textnormal{H}_{k,l-1}^{p}(R)+2\textnormal{H}_{k,l}^{p}(R)| \leq NR.
\end{align}

If $d$ is an even number, $\Theta_{k,l}^{p}$ has a simple pole at $d_{1j}=-1-j$ for $0 \leq j \leq \frac{d}{2}-2$ and a pole of order 2 at $d_{2j}=-\frac{d}{2}-j$ for $j=0,1,2,\cdots$.
Hence by \eqref{appendix:gamma-pole} and \eqref{appendix:residue}, for $R \leq 1$ we have
\begin{align*}
&\textnormal{H}_{k,l}^{p}(R)=\sum_{j=0}^{\frac{d}{2}-2}\textnormal{Res}_{z=d_{1j}}\left[\Theta_{k,l}^{p}R^{-z}\right]1_{d\neq 2}+\sum_{j=0}^{\infty}\textnormal{Res}_{z=d_{2j}}\left[\Theta_{k,l}^{p}R^{-z}\right]\nonumber\\
				&=\sum_{j=0}^{\frac{d}{2}-2}\lim_{z\rightarrow d_{1j}}\left((z-d_{1j})\Theta_{k,l}^{p}(z)R^{-z}\right)1_{d\neq 2}+\sum_{j=0}^{\infty}\lim_{z\rightarrow d_{2j}}\frac{d}{dz}\left((z-d_{2j})^{2}\Theta_{k,l}^{p}(z)R^{-z}\right)\nonumber\\
				 &=\sum_{j=0}^{\frac{d}{2}-2}(-1-j)^{k+l}\cdot\frac{(-1)^j}{j!}\cdot\frac{\Gamma(\frac{d}{2}-1-j)}{\Gamma(1-\alpha (1+j))}R^{1+j} \cdot 1_{d\neq 2} \\
&\quad -\sum_{j=0}^{\infty}\left(-\frac{d}{2}-j\right)^{k+l}\cdot\frac{(-1)^{\frac{d}{2}+2j-1}}{j!\left(\frac{d}{2}+j-1\right)!}\cdot\frac{R^{\frac{d}{2}+j}\ln R}{\Gamma(1-\frac{\alpha d}{2}-j\alpha )}\nonumber\\
&\quad +\sum_{j=0}^{\infty}\textnormal{Res}_{z=d_{2j}}\left[\Theta_{k,l}^{p}(z)\right]R^{\frac{d}{2}+j}\nonumber \\
&=(-1)^{k+l}\frac{\Gamma(\frac{d}{2}-1)}{\Gamma(1-\alpha)}R \cdot 1_{d\neq 2}
-\left(-\frac{d}{2}\right)^{k+l}\cdot\frac{(-1)^{\frac{d}{2}-1}}{\left(\frac{d}{2}-1\right)!}\cdot\frac{R^{\frac{d}{2}}\ln R}{\Gamma(1-\frac{\alpha d}{2})}
+ O(R)(R^{\frac{d}{2}}+R^2),
\end{align*}
where $O(R)$ is bounded in $(0,1]$ due to \eqref{stirling} and \eqref{appendix:gamma1}.
Hence
\begin{align}
                    \label{h0 est 2}
|\textnormal{H}_{k,0}^{p}(R)| \leq (R + R \ln R \cdot 1_{d=2}).
\end{align}
Moreover by \eqref{cancel} again, if $l\geq 1$ then
\begin{align}
		\label{appendix:d even}
|d\textnormal{H}_{k,l-1}^{p}(R)+2\textnormal{H}_{k,l}^{p}(R)|\leq NR.
\end{align}
Therefore due to \eqref{appendix:p-large}, \eqref{h0 est 1}, \eqref{appendix:d odd}, \eqref{h0 est 2}, and \eqref{appendix:d even}, we obtain for any $R\leq 1$
\begin{align*}
|\partial_{t}^{n} p(t,x)|
\leq N |x|^{-d}t^{-n} (R + R^{1/2} \cdot 1_{d=1} +R \ln R \cdot 1_{d=2}),
\end{align*}
and for any $R\leq 1$ and $m \in \bN$
\begin{align*}
|\partial_{t}^{n} D_{x}^{m}p(t,x)|
\leq N |x|^{-d-m}t^{-n}R,
\end{align*}
where $N$ depends only on $d$, $m$, $n$, and $\alpha$. Therefore Lemma \ref{p exists}(ii) is proved.

Next we prove \eqref{q-3}. For $k,l=0,1,\ldots$, denote
$$
\Theta_{k,l}^{q}(z)=\frac{\Gamma(\frac{d}{2}+z)\{\Gamma(1+z)\}^{k+l+1}}{\Gamma(\alpha+\alpha z)\{\Gamma(z)\}^{k+l}}=\frac{\Gamma(\frac{d}{2}+z)\Gamma(1+z)}{\Gamma(\alpha(1+z))}z^{k+l}.
$$
We remark that $\Theta_{k,l}^{q}(z)$ does not have a pole at $z=-1$ unless $d=2$.
First  we assume that $d$ is an odd number.
Then $\Theta_{k,l}^{q}$ has a simple pole at $d_{1j}=-2-j$ and $d_{2j}=-\frac{d}{2}-j$ for $j=0,1,2,\cdots$.
By \eqref{appendix:gamma-pole} and \eqref{appendix:residue}, for $R\leq 1$ we have
\begin{align}
				\label{appendix:odd general}
&\textnormal{H}_{k,l}^{q}(R)=\sum_{i=1}^{2}\sum_{j=0}^{\infty}\textnormal{Res}_{z=d_{ij}}\left[\Theta_{k,l}^{q}R^{-z}\right]
				=\sum_{i=1}^{2}\sum_{j=0}^{\infty}\lim_{z\rightarrow d_{ij}}
\left((z-d_{ij})\Theta_{k,l}^{q}(z)R^{-z}\right)\nonumber\\ &=\sum_{j=0}^{\infty}(-2-j)^{k+l}\cdot\frac{(-1)^{1+j}}{(1+j)!}\cdot\frac{\Gamma(\frac{d}{2}-2-j)}{\Gamma(-\alpha(1+j))}R^{2+j}\nonumber \\
&\quad +\sum_{j=0}^{\infty}\left(-\frac{d}{2}-j\right)^{k+l}\cdot\frac{(-1)^{j}}{j!}\cdot\frac{\Gamma(1-\frac{d}{2}-j)}{\Gamma(\alpha-\frac{\alpha d}{2}-j\alpha)}R^{\frac{d}{2}+j}\nonumber\\
				 &=-(-2)^{k+l} \frac{\Gamma(\frac{d}{2}-2)}{\Gamma(-\alpha)} R^2
                +\left(-\frac{d}{2}\right)^{k+l}\frac{\Gamma(1-\frac{d}{2})}{\Gamma(\alpha-\frac{\alpha d}{2})}R^{\frac{d}{2}}+O(R)(R^{\frac{d}{2}+1} + R^3),
\end{align}
where $O(R)$ is bounded in $(0,1]$  as before.
Hence
\begin{align}
                    \label{hq0 est 1}
|\textnormal{H}_{k,0}^{q}(R)| \leq N(R^2 + R^{d/2} \cdot 1_{d=1,3}).
\end{align}
Furthermore, by \eqref{cancel} and \eqref{appendix:odd general} if $l\geq 1$ then
\begin{align}
					\label{appendix:d odd q}
					|d\textnormal{H}_{k,l-1}^{q}(R)+2\textnormal{H}_{k,l}^{q}(R)|\leq N (R^2 + R^{3/2} \cdot 1_{d=1}).
\end{align}
								
On the other hand, if $d$ is even and greater than $2$ then $\Theta_{k,l}^{q}$ has a simple pole at $d_{1j}=-1-j$ for $1\leq j \leq \frac{d}{2}-2$ and a pole of order 2 at $d_{2j}=-\frac{d}{2}-j$ for $j=0,1,2,\cdots$.
If $d=2$, then $\Theta_{k,l}^{q}$ has a simple pole at $-1$ and a pole of order 2 at $d_{2j}=-1-j$ for $j=1,2,\cdots$.
Thus by \eqref{appendix:gamma-pole} and \eqref{appendix:residue}, for $R \leq 1$ we have
\begin{align*}
&\textnormal{H}_{k,l}^{q}(R)
=\sum_{j=1}^{\frac{d}{2}-2}\textnormal{Res}_{z=d_{1j}}\left[\Theta_{k,l}^{q}R^{-z}\right]1_{d \geq 6}
                    \notag
+\textnormal{Res}_{z=-1}\left[\Theta_{k,l}^{q}R^{-z}\right]1_{d=2} \\
                    \notag
&\quad \quad \quad \quad +\textnormal{Res}_{z=d_{20}}\left[\Theta_{k,l}^{q}R^{-z}\right] \cdot 1_{d \geq 4}+\sum_{j=1}^{\infty}\textnormal{Res}_{z=d_{2j}}\left[\Theta_{k,l}^{q}R^{-z}\right]\nonumber\\
&=\sum_{j=1}^{\frac{d}{2}-2}\lim_{z\rightarrow d_{1j}}\left((z-d_{1j})\Theta_{k,l}^{q}(z)R^{-z}\right)1_{d \geq 6}
+\left(-1\right)^{k+l}R \cdot 1_{d=2} \\
&\quad +\lim_{z\rightarrow d_{20}}\frac{d}{dz}\left((z-d_{20})^{2}\Theta_{k,l}^{q}(z)R^{-z}\right)1_{d \geq 4}
+\sum_{j=1}^{\infty}\lim_{z\rightarrow d_{2j}}\frac{d}{dz}\left((z-d_{2j})^{2}\Theta_{k,l}^{q}(z)R^{-z}\right)\nonumber.
\end{align*}
By  the product rule of the differentiation, the above term equals
\begin{align*}
&\sum_{j=1}^{\frac{d}{2}-2} (-1-j)^{k+l}\cdot\frac{(-1)^{j}}{j!}\cdot\frac{\Gamma(\frac{d}{2}-1-j)}{\Gamma(-\alpha j)}R^{1+j}
\cdot 1_{d \geq 6} \\
&\quad +\left(-1\right)^{k+l}R \cdot 1_{d=2}
+\textnormal{Res}_{z=d_{20}}\left[\Theta_{k,l}^{q}(z)\right]R^{\frac{d}{2}}\cdot 1_{d \geq 4} \\
&\quad -\left(-\frac{d}{2}\right)^{k+l}\cdot\frac{(-1)^{\frac{d}{2}-1}}{(\frac{d}{2}-1)!}\cdot\frac{R^{\frac{d}{2}}\ln R}{\Gamma(\alpha-\frac{\alpha d}{2})}1_{d \geq 4}
+\sum_{j=1}^{\infty}\textnormal{Res}_{z=d_{2j}}\left[\Theta_{k,l}^{q}(z)\right]R^{\frac{d}{2}+j}\nonumber\\
&\quad -\sum_{j=1}^{\infty}\left(-\frac{d}{2}-j\right)^{k+l}\cdot\frac{(-1)^{\frac{d}{2}+2j-1}}{j!(\frac{d}{2}+j-1)!}\cdot\frac{R^{\frac{d}{2}+j}\ln R}{\Gamma(\alpha-\frac{\alpha d}{2}-\alpha j)}\nonumber \\
                    \notag
&=-(-2)^{k+l}\cdot\frac{\Gamma(\frac{d}{2}-2)}{\Gamma(- \alpha )}R^2 \cdot 1_{d \geq 6}
+\left(-1\right)^{k+l}R \cdot 1_{d=2}\\
&\quad +\textnormal{Res}_{z=d_{20}}\left[\Theta_{k,l}^{q}(z)\right]R^{\frac{d}{2}} \cdot 1_{d \geq 4}
-\left(-\frac{d}{2}\right)^{k+l}\cdot\frac{(-1)^{\frac{d}{2}-1}}{(\frac{d}{2}-1)!}\cdot\frac{R^{\frac{d}{2}}\ln R}{\Gamma(\alpha-\frac{\alpha d}{2})} 1_{d \geq 4} \\
&\quad +O(R)\big( R^3 + R^{\frac{d}{2}+1}(1+|\ln R|)\big),
\end{align*}
where $O(R)$ is again bounded in $(0,1]$.
Hence
\begin{align}
                    \label{hq0 est 2}
					|\textnormal{H}_{k,0}^{q}(R)|\leq N \big(R^2+R\cdot 1_{d=2}+R^2\ln R \cdot 1_{d=4}\big)
\end{align}
and by \eqref{cancel}
\begin{align}
					\label{appendix:d even q}
|d\textnormal{H}_{k,l-1}^{q}(R)+2\textnormal{H}_{k,l}^{q}(R)|\leq N(R^2 + R^2 \ln R \cdot 1_{d=2}).
\end{align}
Combining
\eqref{appendix:q-large}, \eqref{hq0 est 1}, \eqref{appendix:d odd q}, \eqref{hq0 est 2}, and \eqref{appendix:d even q},
for any $R \leq 1$ and nonnegative integer $m$ we have
\begin{align*}
&|\partial_{t}^{n} D_{x}^{m}q(t,x)|\\
&\leq N|x|^{-d-m}t^{-n+\alpha-1}(R^2+R^2 \ln R \cdot 1_{d=2})\\
& \quad  + N|x|^{-d}t^{-n+\alpha-1}\left[ R^{1/2} \cdot 1_{d=1}+ R \cdot 1_{d=2}+R^2 \ln R \cdot 1_{d=4} \right] 1_{m = 0},
\end{align*}
where $N$ depends only on $d$, $m$, $n$, and $\alpha$.
Therefore Lemma \ref{prop:kernel estimate}(i) is proved.

\vspace{3mm}

Finally we prove Lemma \ref{prop:kernel estimate}(ii), that is for
 any $t\neq 0$ an $x\neq 0$,
\begin{equation}
                    \label{p eqal}
 \partial_{t}^{\alpha}p=\Delta p,
\quad \quad
              \frac{\partial p}{\partial t}=\Delta q.
\end{equation}
To prove the first assertion above one may try to show that their Fourier transforms coincide. But due to the singularity of
$\Delta p(t,\cdot)$ near zero, we instead prove
  \begin{equation}
                    \label{eqn 10.10}
  \int^T_0\int_{\fR^d}\partial^{\alpha}_tp(t,x) h(t)\phi(x)dxdt=\int^T_0\int_{\fR^d}\Delta p(t,x) h(t) \phi(x)dxdt
  \end{equation}
   for any  $\phi \in C_c^\infty(\fR^d\setminus \{0\})$ and $h(t)\in C_c^\infty((0,T))$.

    Note that $t^{-\alpha}|x|^2\geq c>0$ on the support of $h\phi$, and thus  \eqref{p-1} implies that   $\frac{\partial p}{\partial t}(t,x), \partial^{\alpha}_tp(t,x)$, and $\Delta p(t,x)$ are bounded on the support of $h\phi$. Hence both sides of \eqref{eqn 10.10} make sense. By the integration by parts,
    $$
    \int^{T}_0 \partial^{\alpha}_t p(t,x) h dt=\int^{T}_0 p(T-t,x)D^{\alpha}_t H(t)dt,
    $$
    where $H(t):=h(T-t)$.
    Recall that by Parseval's identity
    $$
    \int_{\fR^d} f \bar{g} dx=\int_{\fR^d} (\cF{f}) (\bar{\cF{g}}) d\xi, \quad \forall f,g\in L_2(\fR^d).
    $$
    Considering an approximation of $f$  by functions in $L_2(\fR^d)$ one can easily prove that Parseval's identity holds if $f\in L_1(\fR^d)$ and $g$ is in the Schwartz class.
    Hence the left side of \eqref{eqn 10.10} equals
    \begin{align*}
    &
    \int^{T}_0 \left[\int_{\fR^d} p(T-t,x)\phi(x)dx\right]\partial^{\alpha}_t H(t) dt\\
    &=\int^{T}_0 \left[\int_{\fR^d} E_{\alpha}(-(T-t)^{\alpha}|\xi|^2)\cF(\phi)(\xi)d\xi\right]\partial^{\alpha}_t H(t) dt.
    \end{align*}
    Observe that $\cF(\phi)(\xi) \to 0$ significantly fast as $|\xi|\to \infty$ and $\partial^{\alpha}_t H(t)=0$ if $t$ is sufficiently small.
    Hence by \eqref{mittag}  we can apply Fubini's theorem and show that the last term above is equal to
    \begin{align*}
    &\int_{\fR^d}\left[\int^T_0E_{\alpha}(-(T-t)^{\alpha}|\xi|^2)\partial^{\alpha}_t H(t) dt\right]\cF(\phi)(\xi)d\xi\\
    &= \int_{\fR^d} \int^T_0  E_{\alpha}(-t^{\alpha}|\xi|^2) h(t) \cF(\Delta \phi)(\xi)dtd\xi\\
    &= \int^T_0 \int_{\fR^d}p(t,x) h(t) \Delta \phi(x)dtdx=\int^T_0\int_{\fR^d}\Delta p(t,x) h(t) \phi(x)dxdt.
    \end{align*}
    For the first equality above we use the integration by parts in time variable and the fact $\partial^{\alpha}_t E_{\alpha}(-t^{\alpha}|\xi|^2)=-|\xi|^2E_{\alpha}(-t^{\alpha}|\xi|^2)$, and Parseval's identity is used for the second equality. Therefore the first assertion of \eqref{eqn 10.10}  is proved.

 Next we prove $\Delta q=\frac{\partial p}{\partial t}$.
Note that due to \eqref{p-1}, if $x\neq 0$ then $D_{x}p(\cdot,x)$ and $D_{x}^{2}p(\cdot,x)$ are bounded on  $(0,T)$ uniformly in a neighborhood of $x$. Hence, if  $\alpha\in (0,1)$,
\begin{align*}
\Delta q &=\Delta \frac{d}{dt} \int^t_0 k_{\alpha}(t-s)p(s,x)ds\\
&= \frac{d}{dt}\Delta \int^t_0 k_{\alpha}(t-s)p(s,x)ds\\
&=\frac{d}{dt}\int^t_0 k_{\alpha}(t-s)\Delta p(s,x)ds=D^{1-\alpha}\partial^{\alpha}_tp=\frac{\partial p}{\partial t}.
\end{align*}
Similarly,  if $\alpha\in(1,2)$,
\begin{align*}
\Delta q= \Delta (I_{t}^{\alpha-1}p)=I_{t}^{\alpha-1}(\Delta p)=I_{t}^{\alpha-1}\partial_{t}^{\alpha}p=\frac{\partial p}{\partial t}.
\end{align*}
Thus the second assertion of \eqref{p eqal}  is proved.


\begin{thebibliography}{10}

\bibitem{Bessel}
M.~Abramowitz and I.~A. Stegun.
\newblock {\em Handbook of mathematical functions: with formulas, graphs, and
  mathematical tables}.
\newblock Number~55. Courier Dover Publications, 1972.

\bibitem{Gamma}
G.~E. Andrews, R.~Askey, and R.~Roy.
\newblock {\em Special Functions}, volume~71.
\newblock Cambridge University Press, 1999.

\bibitem{bagley1983theoretical}
R.~L. Bagley and P.~Torvik.
\newblock A theoretical basis for the application of fractional calculus to
  viscoelasticity.
\newblock {\em Journal of Rheology (1978-present)}, 27(3):201--210, 1983.

\bibitem{benson2000application}
D.~A. Benson, S.~W. Wheatcraft, and M.~M. Meerschaert.
\newblock Application of a fractional advection-dispersion equation.
\newblock {\em Water Resources Research}, 36(6):1403--1412, 2000.

\bibitem{braaksma1936asymptotic}
B.~L.~J. Braaksma.
\newblock Asymptotic expansions and analytic continuations for a class of
  barnes-integrals.
\newblock {\em Compositio Mathematica}, 15:239--341, 1936.

\bibitem{caponetto2010fractional}
R.~Caponetto.
\newblock {\em Fractional order systems: modeling and control applications},
  volume~72.
\newblock World Scientific, 2010.

\bibitem{clement2004quasilinear}
P.~Cl{\'e}ment, S.-O. Londen, and G.~Simonett.
\newblock Quasilinear evolutionary equations and continuous interpolation
  spaces.
\newblock {\em Journal of Differential Equations}, 196(2):418--447, 2004.

\bibitem{clement1992global}
P.~Cl{\'e}ment and J.~Pr{\"u}ss.
\newblock Global existence for a semilinear parabolic volterra equation.
\newblock {\em Mathematische Zeitschrift}, 209(1):17--26, 1992.

\bibitem{da1985existence}
G.~Da~Prato and M.~Iannelli.
\newblock Existence and regularity for a class of integrodifferential equations
  of parabolic type.
\newblock {\em Journal of Mathematical Analysis and Applications},
  112(1):36--55, 1985.

\bibitem{Dj}
M.~M. Djrbashian.
\newblock {\em Harmonic analysis and boundary value problems in the complex
  domain}, volume~65.
\newblock Springer, 1993.

\bibitem{EIK}
S.~D. Eidelman, S.~D. Ivasyshen, and A.~N. Kochubei.
\newblock {\em Analytic methods in the theory of differential and
  pseudo-differential equations of parabolic type}, volume 152.
\newblock Springer, 2004.

\bibitem{eidelman2004cauchy}
S.~D. Eidelman and A.~N. Kochubei.
\newblock Cauchy problem for fractional diffusion equations.
\newblock {\em Journal of Differential Equations}, 199(2):211--255, 2004.

\bibitem{engheia1997role}
N.~Engheia.
\newblock On the role of fractional calculus in electromagnetic theory.
\newblock {\em Antennas and Propagation Magazine, IEEE}, 39(4):35--46, 1997.

\bibitem{glockle1995fractional}
W.~G. Gl{\"o}ckle and T.~F. Nonnenmacher.
\newblock A fractional calculus approach to self-similar protein dynamics.
\newblock {\em Biophysical Journal}, 68(1):46--53, 1995.

\bibitem{hilfer2000applications}
R.~Hilfer, P.~Butzer, U.~Westphal, J.~Douglas, W.~Schneider, G.~Zaslavsky,
  T.~Nonnemacher, A.~Blumen, and B.~West.
\newblock {\em Applications of fractional calculus in physics}, volume~5.
\newblock World Scientific, 2000.

\bibitem{kilbas2004h}
A.~A. Kilbas.
\newblock {\em H-transforms: Theory and Applications}.
\newblock CRC Press, 2004.

\bibitem{Kim2014BMOpseudo}
I.~Kim, K.-H. Kim, and S.~Lim.
\newblock Parabolic {BMO} estimates for pseudo-differential operators of
  arbitrary order.
\newblock {\em arXiv preprint arXiv:1408.2343}, 2014.

\bibitem{kochubei2014asymptotic}
A.~N. Kochubei.
\newblock Asymptotic properties of solutions of the fractional diffusion-wave
  equation.
\newblock {\em Fractional Calculus and Applied Analysis}, 17(3):881--896, 2014.

\bibitem{Krylov1999}
N.~V. Krylov.
\newblock An analytic approach to {SPDEs}.
\newblock {\em Stochastic Partial Differential Equations: Six Perspectives,
  Mathematical Surveys and Monographs}, 64:185--242, 1999.

\bibitem{krylov2001calderon}
N.~V. Krylov.
\newblock On the {Calder{\'o}n-Zygmund} theorem with applications to parabolic
  equations.
\newblock {\em Algebra i Analiz}, 13(4):1--25, 2001.

\bibitem{kunstmann2004maximal}
P.~C. Kunstmann and L.~Weis.
\newblock {Maximal $L_{p}$-regularity for Parabolic Equations, Fourier
  Multiplier Theorems and $H^{\infty}$-functional Calculus}.
\newblock In {\em Functional analytic methods for evolution equations}, pages
  65--311. Springer, 2004.

\bibitem{langlands2009fractional}
T.~Langlands, B.~Henry, and S.~Wearne.
\newblock Fractional cable equation models for anomalous electrodiffusion in
  nerve cells: infinite domain solutions.
\newblock {\em Journal of mathematical biology}, 59(6):761--808, 2009.

\bibitem{mainardi1995fractional}
F.~Mainardi.
\newblock Fractional diffusive waves in viscoelastic solids.
\newblock {\em Nonlinear Waves in Solids, Fairfield}, pages 93--97, 1995.

\bibitem{MS}
M.~M. Meerschaert and A.~Sikorskii.
\newblock {\em Stochastic models for fractional calculus}, volume~43.
\newblock Walter de Gruyter, 2011.

\bibitem{metzler1999anomalous}
R.~Metzler, E.~Barkai, and J.~Klafter.
\newblock Anomalous diffusion and relaxation close to thermal equilibrium: a
  fractional fokker-planck equation approach.
\newblock {\em Physical Review Letters}, 82(18):3563, 1999.

\bibitem{metzler2000boundary}
R.~Metzler and J.~Klafter.
\newblock Boundary value problems for fractional diffusion equations.
\newblock {\em Physica A: Statistical Mechanics and its Applications},
  278(1):107--125, 2000.

\bibitem{MK}
R.~Metzler and J.~Klafter.
\newblock The random walk's guide to anomalous diffusion: a fractional dynamics
  approach.
\newblock {\em Physics reports}, 339(1):1--77, 2000.

\bibitem{metzler2004restaurant}
R.~Metzler and J.~Klafter.
\newblock The restaurant at the end of the random walk: recent developments in
  the description of anomalous transport by fractional dynamics.
\newblock {\em Journal of Physics A: Mathematical and General}, 37(31):R161,
  2004.

\bibitem{metzler1995relaxation}
R.~Metzler, W.~Schick, H.-G. Kilian, and T.~F. Nonnenmacher.
\newblock Relaxation in filled polymers: A fractional calculus approach.
\newblock {\em The Journal of Chemical Physics}, 103(16):7180--7186, 1995.

\bibitem{ortigueira2011fractional}
M.~D. Ortigueira.
\newblock {\em Fractional calculus for scientists and engineers}, volume~84.
\newblock Springer, 2011.

\bibitem{Po}
I.~Podlubny.
\newblock {\em Fractional differential equations: an introduction to fractional
  derivatives, fractional differential equations, to methods of their solution
  and some of their applications}, volume 198.
\newblock Academic press, 1998.

\bibitem{podlubny1999fractional}
I.~Podlubny.
\newblock {Fractional-order systems and $PI^{\lambda}D^{\mu}$-controllers}.
\newblock {\em IEEE Transactions on Automatic Control}, 44(1):208--214, 1999.

\bibitem{Pr1991}
J.~Pr{\"u}ss.
\newblock Quasilinear parabolic {Volterra} equations in spaces of integrable
  functions.
\newblock {\em In B. de Pagter, Ph. Cl\'{e}ment, E. Mitidieri, editors,
  Semigroup Theory and Evolution Equations, Lecture Notes in Pure and Applied
  Mathematics}, 135:401--420, 1991.

\bibitem{Pr}
J.~Pr{\"u}ss.
\newblock {\em Evolutionary integral equations and applications}.
\newblock Springer, 2012.

\bibitem{pskhu2009fundamental}
A.~V. Pskhu.
\newblock The fundamental solution of a diffusion-wave equation of fractional
  order.
\newblock {\em Izvestiya: Mathematics}, 73(2):351, 2009.

\bibitem{raberto2002waiting}
M.~Raberto, E.~Scalas, and F.~Mainardi.
\newblock Waiting-times and returns in high-frequency financial data: an
  empirical study.
\newblock {\em Physica A: Statistical Mechanics and its Applications},
  314(1):749--755, 2002.

\bibitem{sabatier2007advances}
J.~Sabatier, O.~P. Agrawal, and J.~T. Machado.
\newblock {\em Advances in fractional calculus}.
\newblock Springer, 2007.

\bibitem{SY}
K.~Sakamoto and M.~Yamamoto.
\newblock Initial value/boundary value problems for fractional diffusion-wave
  equations and applications to some inverse problems.
\newblock {\em Journal of Mathematical Analysis and Applications},
  382(1):426--447, 2011.

\bibitem{SKM}
S.~G. Samko, A.~A. Kilbas, and O.~I. Marichev.
\newblock {\em Fractional integrals and derivatives}, volume 1993.
\newblock 1993.

\bibitem{scalas2000fractional}
E.~Scalas, R.~Gorenflo, and F.~Mainardi.
\newblock Fractional calculus and continuous-time finance.
\newblock {\em Physica A: Statistical Mechanics and its Applications},
  284(1):376--384, 2000.

\bibitem{SBMW}
R.~Schumer, D.~A. Benson, M.~M. Meerschaert, and S.~W. Wheatcraft.
\newblock Eulerian derivation of the fractional advection--dispersion equation.
\newblock {\em Journal of Contaminant Hydrology}, 48(1):69--88, 2001.

\bibitem{stein1971introduction}
E.~M. Stein and G.~L. Weiss.
\newblock {\em Introduction to Fourier analysis on Euclidean spaces}, volume~1.
\newblock Princeton university press, 1971.

\bibitem{tarasov2006electromagnetic}
V.~E. Tarasov.
\newblock Electromagnetic fields on fractals.
\newblock {\em Modern Physics Letters A}, 21(20):1587--1600, 2006.

\bibitem{V}
L.~von Wolfersdorf.
\newblock On identification of memory kernels in linear theory of heat
  conduction.
\newblock {\em Mathematical methods in the applied sciences}, 17(12):919--932,
  1994.

\bibitem{YGD}
H.~Ye, J.~Gao, and Y.~Ding.
\newblock A generalized {Gronwall} inequality and its application to a
  fractional differential equation.
\newblock {\em Journal of Mathematical Analysis and Applications},
  328(2):1075--1081, 2007.

\bibitem{zacher2005maximal}
R.~Zacher.
\newblock {Maximal regularity of type $L_{p}$ for abstract parabolic Volterra
  equations}.
\newblock {\em Journal of Evolution Equations}, 5(1):79--103, 2005.

\bibitem{Za}
R.~Zacher.
\newblock Weak solutions of abstract evolutionary integro-differential
  equations in hilbert spaces.
\newblock {\em Funkcialaj Ekvacioj}, 52(1):1--18, 2009.

\bibitem{zaslavsky2002chaos}
G.~M. Zaslavsky.
\newblock Chaos, fractional kinetics, and anomalous transport.
\newblock {\em Physics Reports}, 371(6):461--580, 2002.

\end{thebibliography}

\end{document}